\documentclass[12pt]{amsart}
\setlength{\textheight}{23cm}                       
\setlength{\textwidth}{15.5cm}
\setlength{\oddsidemargin}{0cm} 
\setlength{\evensidemargin}{0cm}
\setlength{\topmargin}{0cm}
\usepackage[centertags]{amsmath}
\usepackage{amsfonts,amsthm}
\usepackage{amscd}
\usepackage{latexsym}
\usepackage{amssymb}
\usepackage{mathpazo}
\linespread{1.05}
\usepackage{eucal}

%
\begin{document}
\theoremstyle{plain} 
\newtheorem{theorem}{\em Theorem\,}[section]
\newtheorem{lemma}[theorem]{\em Lemma\,}
\newtheorem{corollary}[theorem]{\em Corollary\,}
\newtheorem{proposition}[theorem]{\bf Proposition}
\newtheorem{claim}[theorem]{\bf Claim}
\theoremstyle{definition} 
\newtheorem{definition}[theorem]{\em Definition}
\newtheorem{remark}[theorem]{\em Remark}
\newtheorem{example}[theorem]{\em Example}
\newtheorem{notation}[theorem]{\em Notation}

\font\medit=ptmri at 18pt
\def\vp{\tau}
\def\navp{\nabla\hskip-.5pt\vp}
\def\bbI{\mathbf{I}}
\def\bbR{\mathrm{I\!R}}
\def\bbRP{\bbR\mathrm{P}}
\def\bbC{{\mathchoice {\setbox0=\hbox{$\displaystyle\mathrm{C}$}\hbox{\hbox 
to0pt{\kern0.4\wd0\vrule height0.9\ht0\hss}\box0}} 
{\setbox0=\hbox{$\textstyle\mathrm{C}$}\hbox{\hbox 
to0pt{\kern0.4\wd0\vrule height0.9\ht0\hss}\box0}} 
{\setbox0=\hbox{$\scriptstyle\mathrm{C}$}\hbox{\hbox 
to0pt{\kern0.4\wd0\vrule height0.9\ht0\hss}\box0}} 
{\setbox0=\hbox{$\scriptscriptstyle\mathrm{C}$}\hbox{\hbox 
to0pt{\kern0.4\wd0\vrule height0.9\ht0\hss}\box0}}}} 
\def\bbCP{\bbC\mathrm{P}}
\def\dimr{\dim_{\hskip-.2pt\bbR\hskip-1.7pt}^{\phantom i}}
\def\dimc{\dim_{\hskip.4pt\bbC\hskip-1.2pt}}
\def\Lie{\pounds}
\def\hs{\hskip.7pt}
\def\hh{\hskip.4pt}
\def\hn{\hskip-.4pt}
\def\nh{\hskip-.7pt}
\def\nnh{\hskip-1.5pt}
\def\ns{\hskip-1.2pt}
\def\tm{{T\hskip-.3ptM}}
\def\yj{\gamma}
\def\sd{\varSigma}
\def\nv{E}
\def\ds{\lambda}
\def\vg{\varGamma}
\def\si{\phi}
\def\ta{\psi}
\def\ve{\varepsilon}
\def\lar{a}
\def\prj{\pi}
\def\pro{P}

\voffset=-25pt\hoffset=13pt  

\title[Kil\-ling potentials with geodesic gradients]{$\text{\medit 
Kil\-ling Potentials with Geodesic Gradients}$\\
\vskip2pt
$\text{\medit on K\"ah\-ler Surfaces}$}
\author{{\sc Andrzej Derdzinski}}
\address{\hbox{\rm Department\ of\ Mathematics}\linebreak
\hbox{\hskip11.6pt\rm The\ Ohio\ State\ University}\linebreak
\hbox{\hskip11.6pt\rm Columbus,\ OH\ 43210,\ USA}\linebreak
\hbox{{\hskip11.6pt\sc E-mail:}\ {\sf andrzej@math.ohio-state.edu}}}
\begin{abstract}
We classify compact K\"ah\-ler surfaces with nonconstant Killing potentials 
such that all integral curves of their gradients are reparametrized geodesics.
\end{abstract}

\subjclass{53C55}

\keywords{Kil\-ling potential, geodesic gradient}

\maketitle

\section{Introduction}\label{in}
Let $\,\vp\,$ be a Kil\-ling potential on a K\"ah\-ler manifold $\,(M,g)$, by 
which one means a $\,C^\infty$ function $\,\vp:M\to\bbR\,$ such that 
$\,J(\navp)\,$ is a Kil\-ling field on $\,(M,g)$. We say that $\,\vp\,$ has a 
{\it geodesic gradient\/} if all nontrivial integral curves of $\,\navp\,$ are 
re\-pa\-ram\-e\-trized geodesics, or---equivalently (Section~\ref{se})---if 
$\,dQ\wedge\hs d\vp=0$, where $\,Q=g(\navp,\navp)$.

There are many known examples of nonconstant Kil\-ling potentials with 
geodesic gradients on compact K\"ah\-ler manifolds. They include the 
sol\-i\-ton functions of the K\"ah\-\hbox{ler\hs-}\hskip0ptRic\-ci sol\-i\-tons discovered by 
Koiso \cite{koiso} and, independently, Cao \cite{cao}; special 
K\"ah\-\hbox{ler\hs-}\hskip0ptRic\-ci potentials \cite[\S~7]{derdzinski-maschler-03}, 
\cite[\S\S5--6]{derdzinski-maschler-06}; and functions on complex projective 
spaces obtained as ratios of suitable real quadratic forms 
(Example~\ref{fubst}).

This paper presents a classification of all triples $\,(M,g,\vp)\,$ formed by 
a compact K\"ah\-ler surface $\,(M,g)\,$ and a nonconstant Kil\-ling 
potential $\,\vp:M\to\bbR\,$ with a geodesic gradient. For those 
$\,(M,g,\vp)\,$ in which $\,\vp\,$ is not a special K\"ah\-\hbox{ler\hs-}\hskip0ptRic\-ci 
potential, $\,M\,$ must be a hol\-o\-mor\-phic $\,\bbCP^1$ bundle over a 
Riemann surface $\,\sd$, while $\,g\,$ and $\,\vp\,$ are obtained, via an 
explicit Ca\-la\-bi-style construction, from a Riemannian metric $\,h\,$ on 
$\,\sd$, a function $\,Q\,$ on a closed interval $\,\bbI$, subject only to 
specific positivity and boundary conditions, and a nonconstant mapping 
$\,\yj:\sd\to\bbRP^1\nh\smallsetminus\bbI\,$ (where 
$\,\bbI\subset\bbR\subset\bbRP^1$). The objects $\,\sd,h,\bbI,Q\,$ and 
$\,\yj$, being geometric invariants of the triple $\,(M,g,\vp)$, may be used 
to pa\-ram\-e\-trize the moduli space of such $\,(M,g,\vp)$.

Since special K\"ah\-\hbox{ler\hs-}\hskip0ptRic\-ci potentials on compact 
K\"ah\-ler manifolds have already been classified 
\cite{derdzinski-maschler-06}, the result just mentioned leads to a 
description of all compact K\"ah\-ler surfaces admitting nonconstant Kil\-ling 
potentials with geodesic gradients. They are bi\-hol\-o\-mor\-phic to total 
spaces of $\,\bbCP^1$ bundles, or to $\,\bbCP^2\nnh$. See 
\cite[\S\S5--6]{derdzinski-maschler-06}.

\section{Preliminaries}\label{pr}
All manifolds, mappings and tensor fields, including Riemannian metrics and 
functions, are assumed to be of class $\,C^\infty\nnh$. A (sub)manifold is by 
definition connected.

Let $\,\mathrm{Ric}\,$ be the Ric\-ci tensor of a tor\-sion-free connection 
$\,\nabla\,$ on a manifold $\,M$. Any vector field $\,v\,$ on $\,M\,$ 
satisfies the Boch\-ner identity
\begin{equation}\label{bch}
d\,\mathrm{div}\hskip2ptv\,=\,\mathrm{div}\hskip2pt\nabla\nh v\,
-\,\mathrm{Ric}\hh(\,\cdot\,,v)\hh,
\end{equation}
the coordinate form of which, 
$\,v^{\hs k}{}_{,\hs kj}=v^{\hs k}{}_{,jk}-R_{jk}v^{\hs k}$, arises by 
contraction in $\,l=k$ from the Ric\-ci identity $\,v^{\hs l}{}_{,jk}
-v^{\hs l}{}_{,\hs kj}=R_{jks}{}^lv^s\nnh$, which in turn is nothing else 
than the definition of the curvature tensor $\,R$. For such $\,M,\nabla\,$ and 
$\,v$, we treat $\,\nabla\nh v\,$ as the en\-do\-mor\-phism of the tangent 
bundle acting on vector fields $\,w\,$ by $\,w\mapsto\nabla_{\!v}w$, and then 
$\,\mathrm{div}\hskip2ptv=\mathrm{tr}\hskip2.5pt\nabla\nh v$.

Whenever $\,(M,g)\,$ is a Riemannian manifold, the symbol $\,\nabla\,$ will 
denote both the Le\-vi-Ci\-vi\-ta connection of $\,g\,$ and the 
$\,g$-gra\-di\-ent. If $\,\vp:M\to\bbR$, we have
\begin{equation}\label{tnd}
2\hh\nabla d\vp(v,\,\cdot\,)\,=\,dQ\hh,\hskip12pt\mathrm{where}\hskip7ptv
=\navp\hskip7pt\mathrm{and}\hskip7ptQ=g(v,v)\hs,
\end{equation}
as one sees noting that, in local coordinates, 
$\,(\vp_{,\hs k}\vp^{\hh,\hs k})_{,\hh j}^{\phantom i}
=2\vp_{,\hh kj}\vp^{\hh,\hs k}$.

Given a sub\-man\-i\-fold $\,\sd\,$ of a Riemannian manifold $\,(M,g)\,$
and $\,\ve\in(0,\infty)$, we denote by $\,N\sd\,$ the normal bundle of 
$\,\sd$, by $\,N^{\hh\ve}\nh\sd\,$ the (disjoint) union of radius $\,\ve$ 
open balls around $\,0\,$ in the normal spaces of $\,\sd$, by 
$\,B_\ve\nh(\nh\sd)\,$ the set of points of $\,M$ lying at distances less 
than $\,\ve\,$ from $\,\sd$, also called the 
$\,\ve${\it-neigh\-bor\-hood\/} of $\,\sd\,$ in $\,(M,g)$, by 
$\,\mathcal{D}\subset\tm\,$ is the domain of the exponential mapping 
$\,\mathrm{Exp}\,$ of $\,(M,g)$, and by 
$\,\mathrm{Exp}^\perp\nnh:\mathcal{D}\hh\cap N\sd\to M\,$ the {\it normal 
exponential mapping\/} of $\,\sd$, that is, the restriction of 
$\,\mathrm{Exp}\,$ to $\,\mathcal{D}\cap N\sd$. Thus, 
$\,N^{\hh\ve}\nh\sd\subset N\sd\,$ and $\,B_\ve\nh(\nh\sd)\subset M\,$ are 
open sub\-man\-i\-folds.
\begin{remark}\label{kilxp}As shown by Kobayashi \cite{kobayashi}, if $\,u\,$ 
is a Kil\-ling vector field on a Riemannian manifold $\,(M,g)$, the connected 
components of the zero set of $\,u\,$ are mutually isolated totally geodesic 
sub\-man\-i\-folds of even co\-di\-men\-sions. Every point of any such 
component $\,\sd$ \hbox{obviously} has a neighborhood $\,\sd'$ in $\,\sd\,$ 
with the property that, for some $\,\ve\in(0,\infty)$, the domain of 
$\,\mathrm{Exp}^\perp$ contains $\,N^{\hh\ve}\nh\sd'$ and 
$\,\mathrm{Exp}^\perp$ maps $\,N^{\hh\ve}\nh\sd'$ dif\-feo\-mor\-phic\-al\-ly 
onto an open set $\,\,U\subset M$. Whenever $\,\sd'\nnh,\ve\,$ and $\,\,U\,$ 
are chosen as above, the inverse of the dif\-feo\-mor\-phism 
$\,\mathrm{Exp}^\perp$ sends $\,u\,$ restricted to $\,\,U\,$ to a vector field 
$\,\hat u\,$ on $\,N^{\hh\ve}\nh\sd'$ which is vertical (tangent to the 
open-ball fibres $\,N_y^{\hh\ve}\nh\sd$, $\,y\in\sd'$) and, in each fibre 
$\,N_y^{\hh\ve}\nh\sd$, coincides with the linear vector field provided by the 
en\-do\-mor\-phism $\,[\nabla\nh u]_y$ of $\,T\hskip-2pt_y\hskip-.9ptM\,$ 
restricted to $\,N_y\sd$.

This is immediate since $\,\mathrm{Exp}^\perp$ maps short line segments 
emanating from $\,0\,$ in $\,N_y^{\hh\ve}\nh\sd\,$ onto geodesics, and so the 
local flow of $\,u\,$ in the sub\-man\-i\-fold 
$\,\mathrm{Exp}^\perp\nh(N_y^{\hh\ve}\nh\sd)$ corresponds, via 
$\,\mathrm{Exp}^\perp\nnh$, to the linear local flow near $\,0\,$ in 
$\,N_y\sd\,$ generated by $\,[\nabla\nh u]_y$.
\end{remark}
\begin{remark}\label{gauss}Let $\,\sd\,$ be a compact sub\-man\-i\-fold of 
a Riemannian manifold $\,(M,g)$. If $\,\ve\in(0,\infty)\,$ is 
sufficiently small, then the domain of $\,\mathrm{Exp}^\perp$ contains 
$\,N^{\hh\ve}\nh\sd\,$ and $\,\mathrm{Exp}^\perp$ maps $\,N^{\hh\ve}\nh\sd\,$ 
dif\-feo\-mor\-phic\-al\-ly onto $\,B_\ve\nh(\nh\sd)$. For any such 
$\,\ve$, the squared distance from $\,\sd\,$ is a $\,C^\infty$ function on 
$\,B_\ve\nh(\nh\sd)$, corresponding under the dif\-feo\-mor\-phism 
$\,\mathrm{Exp}^\perp$ to the squared-norm function on $\,N^{\hh\ve}\nh\sd$, 
and its $\,g$-gra\-di\-ent is tangent to all normal geodesics of lengths 
less that $\,\ve\,$ emanating from $\,\sd$, all of which are 
dis\-tance-min\-i\-miz\-ing.

The last claim follows from the generalized Gauss lemma, cf.\ 
\cite[p.\ 26]{gray}, in exactly the same way as the ordinary Gauss lemma is 
used to establish a special case of this claim, in which $\,\sd\,$ consists of 
a single point.
\end{remark}
The following well-known fact will be needed at the very end of 
Section~\ref{sp}.
\begin{lemma}\label{ismex}Let\/ $\,(\hat M,\hat g)\,$ and\/ $\,(M,g)\,$ be 
complete Riemannian manifolds with open subsets\/ 
$\,\hat M'\subset\hat M\,$ and\/ $\,M'\subset M\,$ such that both\/ 
$\,\hat M\smallsetminus\hat M'$ and\/ $\,M\smallsetminus M'$ are unions of 
finitely many compact submanifolds of codimensions greater than one. Any 
isometry of\/ $\,(\hat M'\nnh,\hat g)$ onto\/ $\,(M'\nnh,g)\,$ can then 
be uniquely extended to an isometry of\/ $\,(\hat M,\hat g)\,$ onto\/ 
$\,(M,g)$. If, in addition, $\,(\hat M,\hat g)\,$ and\/ $\,(M,g)\,$ are 
K\"ah\-ler manifolds and the isometry\/ $\,\hat M'\nh\to M'$ is a 
bi\-hol\-o\-mor\-phism, then so is the extension\/ $\,\hat M\to M$.
\end{lemma}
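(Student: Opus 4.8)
The plan is to establish the extension in two stages: first extend the isometry as a smooth map to all of $\hat M$, then verify it is a global isometry (respectively biholomorphism). Let $F:\hat M'\to M'$ be the given isometry. Since $\hat M\smallsetminus\hat M'$ is a finite union of compact submanifolds of codimension at least two, the ``missing'' set has the property that nearby points of $\hat M'$ can be joined to it along short minimizing geodesics; intuitively, $F$ cannot have any choice about where to send a removed point, because the removed locus is recovered metrically as a limit of points in $\hat M'$. Concretely, I would fix a point $\hat x\in\hat M\smallsetminus\hat M'$ and a small metric ball about it; because $(\hat M,\hat g)$ is complete and the removed set has high codimension, the punctured ball $B\smallsetminus(\hat M\smallsetminus\hat M')$ is connected and the restriction of $F$ to it is a local isometry into $(M,g)$.

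The key step is to show that $F$ extends continuously, hence smoothly, across each removed submanifold. I would argue that $F$ is uniformly continuous near $\hat x$: given any sequence $\hat x_n\to\hat x$ in $\hat M'$, the images $F(\hat x_n)$ form a Cauchy sequence in $M$ because $F$ preserves distance along paths lying in $\hat M'$, and the high codimension guarantees that such paths can be chosen with lengths converging to the ambient distances (removing a codimension-$\ge 2$ set does not increase distances between nearby points). Completeness of $(M,g)$ then supplies a limit $F(\hat x):=\lim F(\hat x_n)$, independent of the sequence. This produces a distance-preserving map $\hat M\to M$, and by the classical Myers--Steenrod theorem a surjective distance-preserving map between complete Riemannian manifolds is a smooth isometry; the extended map is surjective because its image is closed (by completeness) and open (being a local isometry on a dense set), hence all of the connected manifold $M$. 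Uniqueness is immediate from continuity, since two extensions agreeing on the dense subset $\hat M'$ must coincide.

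For the K\"ahler addendum, once the smooth isometric extension $F:\hat M\to M$ is in hand, I would invoke the fact that an isometry of K\"ahler manifolds that is holomorphic on a dense open set is holomorphic everywhere: the tensor $F^*J_M - J_{\hat M}\circ(\text{suitable identification})$ vanishes on $\hat M'$ and is continuous, hence vanishes identically. More precisely, the condition that $dF$ intertwines the two complex structures is a closed condition expressed by the vanishing of a continuous tensor field on $\hat M$, which holds on the dense subset $\hat M'$ and therefore on all of $\hat M$ by continuity; thus $F$ is a biholomorphism.

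The main obstacle I anticipate is the continuity-of-extension step, specifically justifying that distances measured within the complement $\hat M'$ agree in the limit with ambient distances, so that $F(\hat x_n)$ is genuinely Cauchy. This is precisely where the codimension-greater-than-one hypothesis is essential: if the removed set had codimension one it could separate $\hat M$ or force detours that inflate path-lengths, and the Cauchy property could fail. I would handle this by noting that for a submanifold of codimension $\ge 2$, its complement is locally connected and locally geodesically convex up to arbitrarily small error, so minimizing geodesics between nearby points of $\hat M'$ may be perturbed to avoid the removed set with negligible change in length; the remaining estimates are routine.
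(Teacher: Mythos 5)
Your proof is correct and follows the standard route; note that the paper does not actually prove Lemma~\ref{ismex} itself but simply cites \cite[Lemma~16.1]{derdzinski-maschler-06}, whose argument is essentially the one you give. The one step deserving full detail is the identification of the intrinsic (length-metric) distance of $\hat M'$ with the restriction of the ambient distance of $\hat M$ (and likewise for $M'$), which is exactly where compactness and codimension greater than one of the removed submanifolds enter; once that is in place, the $1$-Lipschitz extension by completeness, Myers--Steenrod, the closed-plus-dense-image argument for surjectivity, and the continuity argument for the vanishing of $dF\circ J-J\circ dF$ all go through as you describe.
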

\begin{proof}See, for instance, \cite[Lemma~16.1]{derdzinski-maschler-06}.
\end{proof}
\begin{remark}\label{compl}We will use the eas\-i\-ly-ver\-i\-fied fact that a 
Riemannian manifold $\,(M,g)$ is complete if and only if every curve 
$\,(b,c)\ni t\mapsto x(t)\in M\,$ of finite length has limits as 
$\,t\to b\,$ and $\,t\to c$.
\end{remark}
\begin{remark}\label{rpone}We treat $\,\bbR\,$ as a subset of $\,\bbRP^1$ via 
the usual embedding $\,\vp\mapsto[\vp,1]\,$ (in homogeneous coordinates). For 
algebraic operations involving $\,\infty=[\hh1,0\hs]\in\bbRP^1$ and elements 
of $\,\bbR\subset\bbRP^1\nnh$, the standard conventions apply; thus, 
$\,p/\infty=0\,$ and $\,q/0=p+\infty=\infty\,$ if $\,p\in\bbR\,$ and 
$\,q\in\bbR\smallsetminus\{0\}$.
\end{remark}

\section{Kil\-ling Potentials}\label{kp}
The symbols $\,J\,$ and $\,\omega\,$ always stand for the 
com\-plex-struc\-ture tensor of a given K\"ah\-ler manifold $\,(M,g)\,$ and 
for its K\"ah\-ler form, with $\,\omega=g(J\,\cdot\,,\,\cdot\,)$. 
Real-hol\-o\-mor\-phic vector fields on $\,M\,$ then are the sections $\,v\,$ 
of $\,\tm\,$ such that $\,\Lie\hskip-.5pt_vJ=0$, which is equivalent to 
$\,[J,\nabla\nh v]=0$, the commutator $\,[\hskip2.5pt,\hskip1pt]\,$ being 
applied here to vec\-tor-bun\-dle morphisms $\,\tm\to\tm$. See, for instance, 
\cite[\S~5]{derdzinski-maschler-03}.

A $\,C^\infty$ function $\,\vp\,$ on a K\"ah\-ler manifold is a Kil\-ling 
potential (Section~\ref{in}) if and only if $\,v=\navp\,$ is a 
real-hol\-o\-mor\-phic vector field, cf.\ 
\cite[Lemma 5.2]{derdzinski-maschler-03}. In this case,
\begin{equation}\label{dvd}
d_v\hs\Delta\vp\hs\,=\,\hs2\hskip2pt\mathrm{div}\hskip2pt\nabla_{\!v}v\hs\,
-\,\hs2\hs|\nabla\nh v|^2,\hskip12pt\mathrm{where}\hskip7ptv=\navp\hs.
\end{equation}
In fact, the Boch\-ner identity (\ref{bch}) with $\,v=\navp\,$ reads 
$\,d\hs\Delta\vp=\mathrm{div}\hskip2pt\nabla d\vp
-\mathrm{Ric}\hh(\,\cdot\,,v)$. Multiplying both sides by $\,2\,$ and then 
subtracting the well-known equality
\begin{equation}\label{ddt}
d\hs\Delta\vp\,=\,-2\hs\mathrm{Ric}\hh(\,\cdot\,,v)\hh,\hskip12pt\mathrm{with}
\hskip7ptv=\navp\hs,
\end{equation}
valid whenever $\,\vp\,$ is a Kil\-ling potential \cite{calabi}, 
cf.\ \cite[formula (5.4)]{derdzinski-maschler-03}, we obtain 
$\,d\hs\Delta\vp=2\hskip2pt\mathrm{div}\hskip2pt\nabla d\vp$. Hence 
$\,d_v\hs\Delta\vp=2v^{\hs k}{}_{,jk}v^{\hs j}
=2(v^{\hs k}{}_{,\hh j}v^{\hs j}){}_{,\hs k}
-2v^{\hs k}{}_{,\hh j}v^{\hs j}{}_{,\hs k}$, as required.
\begin{remark}\label{rhcom}Given a Kil\-ling potential $\,\vp\,$ on a 
K\"ah\-ler manifold $\,(M,g)$, let us consider the vector fields 
$\,v=\navp\,$ and $\,u=Jv$. Then
\begin{enumerate}
  \def\theenumi{{\rm\alph{enumi}}}
\item $v,u\,$ are both real-hol\-o\-mor\-phic, and commute,
\item $u\,$ is a Kil\-ling field.
\end{enumerate}
Specifically, (b) amounts to the definition of a Kil\-ling potential at 
the beginning of Section~\ref{in}, and (a) is well known 
\cite[formula (5.1.b) and Lemma 5.2]{derdzinski-maschler-03}.
\end{remark}
A {\it special K\"ah\-ler-Ric\-ci potential\/} 
\cite[\S~7]{derdzinski-maschler-03}. on a K\"ah\-ler manifold $\,(M,g)\,$ is 
any nonconstant Kil\-ling potential $\,\vp\,$ such that, at points where 
$\,d\vp\ne0$, all nonzero vectors orthogonal to $\,\navp\,$ and $\,J(\navp)\,$ 
are eigen\-vec\-tors of both $\,\nabla d\vp\,$ and $\,\mathrm{Ric}$. 
\begin{remark}\label{cifot}Let $\,\vp\,$ and $\,f\,$ be functions on a 
manifold $\,M\,$ such that $\,\vp\,$ is nonconstant and $\,f=\chi\circ\vp\,$ 
with some $\,C^\infty$ function $\,\chi:\bbI\to\bbR$, where $\,\bbI=\vp(M)\,$ 
is the range of $\,\vp$. We then say that $\,f\,$ is {\it a\/ $\,C^\infty$ 
function of\/} $\,\vp$.
\end{remark}
\begin{remark}\label{skrsu}
In view of (\ref{tnd}) and (\ref{ddt}), a nonconstant Kil\-ling potential 
$\,\vp\,$ on a K\"ah\-ler surface $\,(M,g)\,$ is a special 
K\"ah\-\hbox{ler\hs-}\hskip0ptRic\-ci potential if and only if every point 
with $\,d\vp\ne0$ has a neighborhood on which both $\,Q=g(\navp,\navp)\,$ 
and $\,\Delta\vp\,$ are $\,C^\infty$ functions of $\,\vp$.
\end{remark}

\section{Geodesic Gradients: the Simplest Examples}\label{se}
Let $\,\nabla\,$ be a connection in the tangent bundle $\,\tm\,$ of a manifold 
$\,M\nh$. A {\it geodesic vector field\/} relative to $\,\nabla\nh\,$ is any 
vector field $\,v\,$ on $\,M\,$ such that, for some function 
$\,\ta:M'\nh\to\bbR\,$ defined on the open set $\,M'\nh\subset M\,$ on which 
$\,v\ne0$,
\begin{equation}\label{nvv}
\nabla_{\!v}v\,=\,\ta\hskip.4ptv\qquad\mathrm{everywhere\ in}\hskip7ptM'\nh,
\end{equation}
or, equivalently, such that the integral curves of $\,v\,$ are 
re\-pa\-ram\-e\-trized $\,\nabla\nnh$-ge\-o\-des\-ics.

We say that a function $\,\vp:M\to\bbR\,$ on a Riemannian manifold $\,(M,g)\,$ 
has a {\it geodesic gradient\/} if $\,v=\navp\,$ is a geodesic vector field 
for the Le\-vi-Ci\-vi\-ta connection $\,\nabla\,$ of $\,g$. It is clear from 
(\ref{tnd}) and (\ref{nvv}) that this amounts to the condition
\begin{equation}\label{qeg}
dQ\wedge\hs d\vp\,=\,0\hs,\qquad\mathrm{where}\quad Q\,
=\,g(\navp,\navp)\hs,
\end{equation}
which is in turn the same as requiring $\,Q\,$ to be, locally in $\,M'\nnh$, a 
function of $\,\vp$.
\begin{remark}\label{ggtfc}If $\,v\,$ is a geodesic vector field for a 
connection $\,\nabla\,$ on $\,M$, then so is $\,\mu\hh v$ for any function 
$\,\mu:M\to\bbR$.
\end{remark}
\begin{example}\label{geogr}Each of the following assumptions about a given 
Riemannian manifold $\,(M,g)\,$ and a function $\,\vp:M\to\bbR\,$ implies that 
$\,\vp\,$ has a geodesic gradient.
\begin{enumerate}
  \def\theenumi{{\rm\alph{enumi}}}
\item Some group of isometries of $\,(M,g)\,$ with principal orbits of 
co\-di\-men\-sion $\,1$ leaves $\vp\,$ invariant.
\item $\dim M=1$.
\item $\,\vp=\chi\circ\rho\,$ for some function $\,\rho\,$ on $\,(M,g)\,$ 
that has a geodesic gradient and some $\,\chi:\bbI\to\bbR$, where 
$\,\bbI\subset\bbR\,$ is an interval containing the range $\,\rho(M)$.
\item $(M,g)\,$ is the $\,\ve$-neigh\-bor\-hood, for any sufficiently small 
$\,\ve\in(0,\infty)$, of a given compact sub\-man\-i\-fold $\,\sd\,$ in 
a Riemannian manifold, and $\,\vp\,$ is the squared distance from 
$\,\sd$.
\item $(M,g)\,$ is a Riemannian product and $\,\vp\,$ is a function with a 
geodesic gradient on one of the factor Riemannian manifolds, treated as a 
function on $\,M$.
\end{enumerate}
For (a) this is a direct consequence of (\ref{qeg}), as the gradients of 
$\,\vp\,$ and $\,Q\,$ are both normal to the orbits; (b) leads to (a) for the 
trivial group; and the claims in (c) -- (d) easily follow from 
Remarks~\ref{ggtfc} and~\ref{gauss}, while the case of (e) is obvious.
\end{example}
\begin{example}\label{every}A a nonconstant function $\,\vp\,$ with a geodesic 
gradient exists on every Riemannian manifold $\,(M,g)$, and may be chosen so 
that $\,0\,$ is a regular value of $\,\vp$, and $\,\vp^{-1}(0)\,$ is any 
prescribed compact sub\-man\-i\-fold $\,\sd\,$ of co\-di\-men\-sion $\,1\,$ 
which disconnects $\,M\,$ (such as a sphere embedded in a coordinate 
domain).

In fact, for $\,\ve\,$ as in Remark~\ref{gauss} and a unit normal vector 
field $\,w\,$ along $\,\sd$, the assignment 
$\,(y,t)\mapsto\mathrm{exp}_y\hs tw_y$ defines a dif\-feo\-mor\-phism 
$\,\sd\times(\nh-\ve,\ve)\to B_\ve\nh(\nh\sd)$. As the function 
$\,\rho:B_\ve\nh(\nh\sd)\to\bbR\,$ sending $\,\mathrm{exp}_y\hs tw_y$ to 
$\,t\,$ has a geodesic gradient (cf.\ Remark~\ref{gauss}), we may set 
$\,\vp=\chi\circ\rho$, as in (iii), with $\,\chi:\bbR\to\bbR\,$ that is 
nondecreasing, constant on both $\,(\nh-\infty,-\delta)\,$ and 
$\,(\delta,\infty)\,$ for some $\,\delta\in(0,\ve)$, and equal to the 
identity on a neighborhood of $\,0$.
\end{example}
\begin{example}\label{skrgg}Every special K\"ah\-\hbox{ler\hs-}\hskip0ptRic\-ci potential on a 
K\"ah\-ler manifold (Section~\ref{kp}) has a geodesic gradient, which is 
immediate as (\ref{tnd}) then implies (\ref{qeg}).
\end{example}
\begin{example}\label{fubst}For fixed nonnegative integers $\,k,l,m\,$ with 
$\,m=k+l+1\ge2$, let $\,g\,$ be the Fu\-bi\-ni-Stu\-dy metric on 
$\,M=\bbCP^{\hs m}\nnh$. Then $\,\vp:M\to\bbR\,$ defined by the assignment 
$\,[x,y]\mapsto|y|^2\nh/(|x|^2\nh+|y|^2)$, where $\,[x,y]\,$ are the 
homogeneous coordinates, while $\,x\in\bbC^{\hh k+1}$ and 
$\,y\in\bbC^{\hs l+1}\nnh$, is a nonconstant Kil\-ling potential with a 
geodesic gradient. More precisely, it is easy to verify that $\,Q\,$ in 
(\ref{qeg}) equals $\,4(1-\vp)\vp$, so that the critical points of 
$\,\vp\,$ form the union of two disjoint linear varieties $\,\bbCP^{\hh k}$ 
and $\,\bbCP^{\hs l}$ in $\,\bbCP^{\hs m}\nnh$.
\end{example}
\begin{remark}\label{lngth}Let $\,\vp\,$ be a function with a geodesic 
gradient exists on a Riemannian manifold. For any nonconstant integral curve 
$\,t\mapsto x(t)\,$ of the gradient $\,v=\navp$, the $\,\vp$-im\-age of the 
curve has the form $\,(b,c)$, with $\,-\infty\le b<c\le\infty$. Since 
$\,\vp\,$ is an increasing function of $\,t$, it can be used as a new curve 
parameter. In terms of $\,\vp$, the length of the curve obviously equals 
$\,\int_b^{\hs c}Q^{-1/2}\,d\vp$, where $\,Q=g(v,v)$.
\end{remark}

\section{Further Examples and a Classification Theorem}\label{fe}
The following construction generalizes that of 
\cite[\S5]{derdzinski-maschler-06} (in the case $\,m=2$), and gives rise to 
compact K\"ah\-ler surfaces $\,(M,g)\,$ with nonconstant Kil\-ling potentials 
$\,\vp$, which have geodesic gradients, but, in contrast with 
\cite[\S5]{derdzinski-maschler-06}, need {\it not\/} be special 
K\"ah\-\hbox{ler\hs-}\hskip0ptRic\-ci potentials. For a detailed comparison 
with \cite[\S5]{derdzinski-maschler-06}, see Remark~\ref{cmpre} below.

One begins by fixing a nonuple
\begin{equation}\label{dat}
\bbI,\,a,\,\sd,\,h,\,\mathcal{L},\,(\hskip2.2pt,\hskip1pt),\,\mathcal{H},\,\yj,\,Q
\end{equation}
consisting of the following objects:
\begin{enumerate}
  \def\theenumi{{\rm\roman{enumi}}}
\item a nontrivial closed interval 
$\,\bbI=[\hh\vp_{\mathrm{min}},\vp_{\mathrm{max}}]\,$ of the variable $\,\vp$,
\item a real number $\,a>0$,
\item a compact K\"ah\-ler manifold $\,(\sd,h)\,$ of complex dimension $\,1$, 
\item a $\,C^\infty$ function $\,Q:\bbI\to\bbR\,$ equal to $\,0\,$ at the 
endpoints of $\,\bbI$, positive on its interior $\,\bbI^\circ\nnh$, with 
$\,dQ\hh/\nh d\vp=2a\,$ at $\,\vp_{\mathrm{min}}$ and 
$\,dQ\hh/\nh d\vp=-2a\,$ 
at $\,\vp_{\mathrm{max}}$,
\item a $\,C^\infty$ mapping $\,\yj:\sd\to\bbRP^1\nh\smallsetminus\bbI$, with 
$\,\bbI\subset\bbR\subset\bbRP^1$ as in Remark~\ref{rpone},
\item a $C^\infty$ complex line bundle $\,\mathcal{L}\,$ over $\,\sd\,$ with a 
Hermitian fibre metric $\,(\hskip2.2pt,\hskip1pt)$,
\item the horizontal distribution $\,\mathcal{H}\,$ of a connection in 
$\,\mathcal{L}\,$ making $\,(\hskip2.2pt,\hskip1pt)\,$ parallel and having the 
curvature form $\,\varOmega\hs=-\hs a\hs(\vp_*-\yj)^{-1}\omega^{(h)}\nh$,
\end{enumerate}
where $\,\omega^{(h)}$ is the K\"ahler form of $\,(\sd,h)$. Thus, 
$\,\varOmega=0\,$ at points at which $\,\yj=\infty$. Note that, in (iii), 
$\,(\sd,h)\,$ is nothing else than a closed oriented real surface endowed with 
a Riemannian metric. 

In addition to the data (\ref{dat}), let us fix a $\,C^\infty$ 
dif\-feo\-mor\-phism $\,\bbI^\circ\nh\ni\vp\mapsto r\in(0,\infty)$ such that 
$\,dr/d\vp=ar/Q$, and a ``base point'' $\,\vp_*\in\bbI$. We choose $\,\vp_*$ 
to be the midpoint of $\,\bbI$, which is just an arbitrary normalization. 
See Remark~\ref{bspnt}.

We use the symbol $\,\mathcal{V}\,$ for the vertical distribution 
$\,\mathrm{Ker}\hskip2.7ptd\prj\,$ on the total space of the bundle (also 
denoted by $\,\mathcal{L}$), $\,\prj:\mathcal{L}\to\sd\,$ being the bundle 
projection. From now on the norm function $\,r:\mathcal{L}\to[\hs0,\infty)\,$ 
of $\,(\hskip2.2pt,\hskip1pt)\,$ is treated, simultaneously, as an independent 
variable ranging over $\,[\hs0,\infty)$, so that our fixed 
dif\-feo\-mor\-phism $\,\vp\mapsto r$ turns $\,\vp$, and hence $\,Q\,$ as 
well, into functions $\,\mathcal{L}\to\bbR$.

Next we define a Riemannian metric $\,g\,$ on 
$\,M'\nh=\mathcal{L}\smallsetminus\sd$, where $\,\sd\,$ is identified with 
the zero section, by $\,g=(\vp_*-\yj)^{-1}(\vp-\yj)\hs h\,$ or $\,g=h\,$ on 
$\,\mathcal{H}$, 
$\,g=(ar)^{-2}Q\,\mathrm{Re}\hskip1pt(\hskip2.2pt,\hskip1pt)\,$ on 
$\,\mathcal{V}\nnh$, and $\,g(\mathcal{H},\mathcal{V})=\{0\}$. Tensors on 
$\,\sd\,$ are denoted by the same symbols as their pull\-backs to $\,M'\nnh$, 
so that $\,\yj\,$ stands here for $\,\yj\circ\prj\,$ and $\,h\,$ for 
$\,\prj^*\nh h$. On $\,\mathcal{H}$, the first formula is to be used in the 
$\,\prj$-pre\-im\-age of the set in $\,\sd\,$ on which $\,\yj\ne\infty$, and 
the second one on its complement. Note that 
$\,C^\infty\nnh$-dif\-fer\-en\-tia\-bil\-i\-ty of the algebraic operations in 
$\,\bbRP^1\nnh$, wherever they are permitted (cf.\ Remark~\ref{rpone}) implies 
that $\,g\,$ is of class $\,C^\infty\nnh$.

Obviously, $\,(M'\nnh,g)\,$ is an almost Her\-mit\-i\-an manifold for the 
almost complex structure $\,J\,$ obtained by requiring that the sub\-bun\-dles 
$\,\mathcal{V}\,$ and $\,\mathcal{H}\,$ of $\,\tm'$ be $\,J$-in\-var\-i\-ant 
and, for any $\,x\in M'\nnh$, the restriction of $\,J_x$ to 
$\,\mathcal{V}\nh_x$, or $\,\mathcal{H}_x$, coincide with the complex 
structure of the fibre $\,\mathcal{L}_{\prj(x)}$ or, respectively, with the 
$\,d\prj_x$-pull\-back of the complex structure of $\,\sd$.

Let $\,M\,$ be the $\,\bbCP^1$ bundle over $\,\sd\,$ resulting from the 
projective compactification of $\,\mathcal{L}$. Our $\,g,\vp\,$ and $\,J\,$ 
then have $\,C^\infty$ extensions to a metric, function, and almost complex 
structure on $\,M\,$ denoted, again, by $\,g,\vp\,$ and $\,J$. In fact, such 
extensions exist for the distributions $\,\mathcal{V}\,$ and $\,\mathcal{H}$. 
Our claim thus follows since, according to the conclusion made in 
\cite[\S5]{derdzinski-maschler-06} for $\,m=1$, the function $\,\vp\,$ 
restricted to the subset $\,\mathcal{L}_y\smallsetminus\{0\}\,$ of a single 
fibre of $\,\mathcal{L}_y$ of $\,\mathcal{L}$, and the metric 
$\,(ar)^{-2}Q\,\mathrm{Re}\hskip1pt(\hskip2.2pt,\hskip1pt)\,$ on 
$\,\mathcal{L}_y\smallsetminus\{0\}$, can both be smoothly extended to the 
Riem\-ann-sphere compactification of $\,\mathcal{L}_y$. 

For the section $\,v\,$ of the vertical distribution $\,\mathcal{V}\,$ on 
$\,\mathcal{L}\,$ which, restricted to each fibre of $\,\mathcal{L}$, 
equals $\,a\,$ times the radial (identity) vector field on the fibre, one 
easily verifies that $\,d_v=Q\,d/d\vp$, both sides being viewed as operators 
acting on $\,C^\infty$ functions of $\,\vp$. Consequently, $\,v\,$ equals the 
$\,g$-gradient $\,\navp\,$ of $\,\vp$. Note that $\,g(v,v)=Q$.

From now on the symbols $\,w,w\hh'$ will stand both for any two $\,C^\infty$ 
vector fields in $\,\sd\,$ and, simultaneously, for their horizontal lifts to 
$\,\mathcal{L}\,$ (which themselves are just the $\,\prj$-pro\-jecta\-ble 
horizontal vector fields on $\,\mathcal{L}$). We also define a vector field 
$\,u\,$ on $\,\mathcal{L}$ by $\,u=iv\,$ (multiplication by $\,i\,$ in each 
fibre), so that, for our $\,J$, and $\,w\,$ as above, $\,Jv=u$, while $\,Jw\,$ 
has the same meaning in $\,\mathcal{L}\,$ as in $\,\sd$. With  $\,\nabla\,$ 
and $\,\mathrm{D}\,$ denoting the Le\-vi-Ci\-vi\-ta connections of $\,g\,$ and 
$\,h$, one has, on a dense open subset of $\,M'\nnh$,
\begin{equation}\label{nab}
\begin{array}{l}
\nabla_{\!v}v\,=\,-\hs\nabla_{\!u}u\,=\,\ta\hskip.4ptv\hs,\quad
\nabla_{\!v}\hs u\,=\,\nabla_{\!u}v\,=\,\ta\hskip.4ptu\hs,\\
\nabla_{\!v}w\,=\,\nabla_{\!w}v\,=\,\si\hskip.4ptw\hs,\quad
\nabla_{\!u}w\,=\,\nabla_{\!w}\hs u\,=\,\si\hskip.4ptJw\hs,\\
Q\nabla_{\!w}w\hh'\,=\,\,Q\hs\mathrm{D}_ww\hh'\,
-\,\si\hs[g(w,w\hh')v\hs\,+\,\hs g(Jw,w\hh')u]\\
\phantom{\nabla_{\!w}w\hh'\,\,}
+\,\,\hh(\vp_*-\yj)^{-1}(\vp-\vp_*)\si\hs
[\hh h(\mathrm{D}\hh\yj\hh,w)\hh w\hh'\nh+h(\mathrm{D}\hh\yj\hh,w\hh'\hh)\hh w
-h(w,w\hh'\hh)\hs\mathrm{D}\hh\yj]
\end{array}
\end{equation}
for $\,\ta,\si:M'\to\bbR\,$ given by $\,2\hh\ta=dQ\hh/\nh d\vp\,$ and 
$\,2\hh\si=(\vp-\yj)^{-1}Q$. The dense open set in question is the union of 
the $\,\prj$-pre\-im\-ages of two subsets in $\,\sd$, which are: the 
$\,\yj$-pre\-im\-age of $\,\bbR=\bbRP^1\smallsetminus\{\infty\}$, 
cf.\ Remark~\ref{rpone}; and the interior of the $\,\yj$-pre\-im\-age of 
$\,\infty$. On the former set, $\,\mathrm{D}\hh\yj\,$ denotes the 
$\,h$-gradient of $\,\yj\,$ treated as a real-val\-ued function; on the 
latter, we set $\,\mathrm{D}\hh\yj=0$.

In fact, the connection $\,\nabla\,$ {\it defined\/} by (\ref{nab}) is clearly 
compatible with $\,g\,$ and tor\-sion-free, since $\,v,u\,$ commute both with 
each other and with the horizontal lifts $\,w,w\hh'\nnh$, while the vertical 
component of $\,[w,w\hh'\hh]\,$ is $\,a^{-1}\varOmega(w,w\hh'\hh)\hs u$, cf.\ 
\cite[formula (3.6)]{derdzinski-maschler-03}.

Also, $\,J\,$ commutes with $\,\nabla_{\!v},\hs\nabla_{\!u}$, all 
$\,\nabla_{\!w}$, and $\,\nabla\nh v$. These commutation relations are 
obvious from (\ref{nab}), possibly except $\,[J,\nabla_{\!w}]\hh w\hh'\nh=0$, 
which follows, as (\ref{nab}) yields
\[
[J,\nabla_{\!w}]\hh w\hh'=\hs[(\vp_*-\yj)Q]^{-1}(\vp-\vp_*)\si
\hskip2pt[\varXi(Jw,w\hh'\nnh,J\hh\mathrm{D}\hh\yj)-
\varXi(w,w\hh'\nnh,\mathrm{D}\hh\yj)]\hh,
\]
with $\,\varXi(w,w\hh'\nnh,w\hh''\hh)=h(Jw,w\hh'\hh)\hh w\hh''\nh
+h(Jw\hh'\nnh,w\hh''\hh)\hh w+h(Jw\hh''\nnh,w)\hh w\hh'\nnh$. 
Skew-sym\-me\-try of $\,\varXi\,$ and 
\hbox{two\hh-}\hskip0ptdi\-men\-sion\-al\-i\-ty of $\,\sd\,$ now give 
$\,\varXi(w,w\hh'\nnh,w\hh''\hh)=0$.

The conclusions of the last paragraph amount to $\,\nabla J=0\,$ and 
$\,[J,\nabla\nh v]=0$. The former equality means that $\,g\,$ is a K\"ahler 
metric; the latter states that $\,v=\navp\,$ is real-hol\-o\-mor\-phic, which 
makes $\,\vp\,$ a (nonconstant) Kil\-ling potential on the K\"ah\-ler manifold 
$\,(M,g)$, cf.\ Section~\ref{kp}. Also, $\,\vp\,$ has a geodesic gradient in 
view of the first line in (\ref{nab}). Note that 
$\,\Delta\vp=\mathrm{tr}\hskip2pt\nabla\nh v=2\hh\si+2\hh\ta$, and so
\begin{equation}\label{dte}
\Delta\vp\hs\,=\,\hs(\vp-\yj)^{-1}Q\hs\,+\hs\,dQ\hh/\nh d\vp\hh.
\end{equation}
\begin{remark}\label{cmpre}By (\ref{dte}) and Remark~\ref{skrsu}, our 
$\,\vp\,$ is a special K\"ah\-\hbox{ler\hs-}\hskip0ptRic\-ci potential on 
$\,(M,g)\,$ if and only if $\,\yj\,$ is constant. When $\,\yj\,$ is constant, 
our construction becomes that of \cite[\S5]{derdzinski-maschler-06} for 
$\,m=2,\vp_0^{\phantom i}=\vp_{\mathrm{min}}\hs$, and either $\,\ve=0\,$ with 
an undefined constant $\,c$ (when $\,\yj=\infty$), or $\,\ve=\pm1\,$ with 
$\,c\in\bbR=\bbRP^1\smallsetminus\{\infty\}\,$ equal to the value of 
$\,\yj\,$ (if $\,\yj\ne\infty$); in the latter case, our $\,h\,$ is 
$\,2\hs|\vp_*\nh-c\hh|\,$ times the metric denoted by $\,h\,$ in 
\cite{derdzinski-maschler-06}. 
\end{remark}
\begin{remark}\label{bspnt}The ``base point'' $\,\vp_*$ is not a geometric 
invariant of the triple $\,(M,g,\vp)$ constructed above, and one may choose it 
to be a different constant, or even a {\it function\/} 
$\,\widetilde{\vp}_*:\sd\to\bbR$, as long as $\,\vp\ne\yj\ne\widetilde{\vp}_*$ 
everywhere in $\,M$, so that the definition of $\,g\,$ makes sense. (Again, we 
treat $\,\vp,\yj\,$ and $\,\widetilde{\vp}_*$ as functions $\,M\to\bbR$.) The 
resulting metric $\,g\,$ will then remain unchanged, provided that we replace 
$\,h\,$ with $\,\widetilde h$, equal to 
$\,(\vp_*-\yj)^{-1}(\widetilde{\vp}_*-\yj)\hs h\,$ on the subset of $\,\sd\,$ 
on which $\,\yj\ne\infty$, and to $\,h\,$ on its complement. (Condition (vii) 
for $\,\widetilde{\vp}_*$ and $\,\widetilde h\,$ will still hold, with the 
same $\,\mathcal{H}\,$ and $\,\varOmega$.)

More generally, we can relax conditions (iii) -- (v), while keeping (ii), (vi) 
and (vii), so that $\,\sd\,$ need not be compact, $\,Q\,$ is 
defined and positive on an open interval, and $\,\yj,\vp_*:\sd\to\bbRP^1\nnh$. 
The construction then yields a triple $\,(M,g,\vp)\,$ with the same 
properties, except compactness of $\,M$, where $\,M\,$ now is any connected 
component of the open set in $\,\mathcal{L}\smallsetminus\sd\,$ defined by 
requiring that $\,\vp\ne\yj\ne\vp_*$ and that the values of the norm function 
$\,r\,$ lie in the resulting new range.
\end{remark}
Compact K\"ah\-ler manifolds of all dimensions, admitting special 
K\"ah\-\hbox{ler\hs-}\hskip0ptRic\-ci potentials, have been completely 
described in \cite[Theorem 16.3]{derdzinski-maschler-06}. Combined with the 
following result, this provides a classification of compact K\"ah\-ler 
surfaces with nonconstant Kil\-ling potentials that have geodesic gradients.
\begin{theorem}\label{clssf}Let\/ $\,\vp\,$ be a nonconstant Kil\-ling 
potential with a geodesic gradient on a compact K\"ah\-ler surface\/ 
$\,(M,g)$. If\/ $\,\vp\,$ is not a special K\"ah\-\hbox{ler\hs-}\hskip0ptRic\-ci potential on\/ 
$\,(M,g)$, then, up to a bi\-hol\-o\-mor\-phic isometry, the triple\/ 
$\,(M,g,\vp)\,$ arises from the above construction applied to some data\/ 
{\rm(\ref{dat})} satisfying conditions\/ {\rm(i)} -- {\rm(vii)}, such that\/ 
$\,\yj:\sd\to\bbRP^1\nh\smallsetminus\bbI\,$ is nonconstant.
\end{theorem}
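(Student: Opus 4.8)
The plan is to work on the dense open set $\,M'\nh\subset M\,$ where $\,d\vp\ne0$, to reconstruct the nonuple (\ref{dat}) there as geometric invariants of $\,(M,g,\vp)$, and then to extend the resulting identification across the critical set of $\,\vp\,$ by a rigidity argument. First I would fix the algebraic structure of the Hessian. With $\,v=\navp\,$ and $\,u=Jv$, formula (\ref{tnd}) together with the geodesic-gradient condition, i.e.\ $\,Q\,$ being locally a function of $\,\vp\,$ (cf.\ (\ref{qeg})), yields $\,\nabla_{\!v}v=\ta\hh v\,$ with $\,2\ta=dQ/d\vp\,$ itself a function of $\,\vp$. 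Since $\,v\,$ is real-hol\-o\-mor\-phic, $\,\nabla v=\nabla d\vp\,$ is self-adjoint and commutes with $\,J$; hence on the $\,J$-in\-var\-i\-ant plane $\,\mathcal{V}=\mathrm{span}\{v,u\}\,$ it equals $\,\ta\,\mathrm{Id}$, while on the $\,J$-in\-var\-i\-ant orthogonal plane $\,\mathcal{H}=\mathcal{V}^\perp$ every self-adjoint $\,J$-com\-mut\-ing en\-do\-mor\-phism of a real surface is automatically a multiple $\,\si\,\mathrm{Id}\,$ of the identity. Thus $\,\nabla v\,$ has the two double eigenvalues $\,\ta,\si$, so that $\,\Delta\vp=2\ta+2\si\,$ and $\,|\nabla v|^2=2\ta^2+2\si^2$. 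By Remark~\ref{skrsu}, $\,\vp\,$ fails to be a special K\"ah\-\hbox{ler\hs-}\hskip0ptRic\-ci potential precisely when $\,\si\,$ is not, even locally, a function of $\,\vp$.

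The key computation is then the evolution of $\,\si\,$ along the gradient flow. Feeding $\,\Delta\vp=2\ta+2\si$, the relation $\,\mathrm{div}\,\nabla_{\!v}v=\ta\hs\Delta\vp+d_v\ta\,$ coming from $\,\nabla_{\!v}v=\ta\hh v$, and $\,|\nabla v|^2=2\ta^2+2\si^2\,$ into the hol\-o\-mor\-phic-field identity (\ref{dvd}), I find that the $\,d_v\ta\,$ terms cancel and there remains the Riccati relation $\,d_v\si=2\si(\ta-\si)$. Setting $\,\yj=\vp-Q/(2\si)$, an $\,\bbRP^1$-val\-ued function with $\,\yj=\infty\,$ where $\,\si=0$, this relation is exactly $\,d_v\yj=0$; moreover $\,d_u\yj=0$, because $\,u\,$ is Kil\-ling and $\,\vp,Q,\si\,$ are $\,u$-in\-var\-i\-ant. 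Hence $\,\yj\,$ is constant on the leaves of $\,\mathcal{V}$, and $\,2\si=(\vp-\yj)^{-1}Q$. Being a basic function, $\,\yj\,$ is a function of $\,\vp\,$ if and only if it is constant; so, by the previous paragraph, $\,\yj\,$ is nonconstant exactly because $\,\vp\,$ is not a special K\"ah\-\hbox{ler\hs-}\hskip0ptRic\-ci potential.

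It then remains to assemble the remaining data. The distribution $\,\mathcal{V}\,$ is a hol\-o\-mor\-phic foliation, its leaves being $\,J$-in\-var\-i\-ant since $\,[v,u]=0$, and each leaf, compactified by the two critical points it limits to as $\,\vp\to\vp_{\mathrm{min}},\vp_{\mathrm{max}}$, is a $\,\bbCP^1$; the leaf space is the required compact Riemann surface $\,\sd$, with $\,M\to\sd\,$ a hol\-o\-mor\-phic $\,\bbCP^1$ bundle and $\,\yj\,$ descending to the map~(v). I would take $\,\bbI=\vp(M)$, let $\,Q\,$ be the function in (\ref{qeg}), and recover $\,h\,$ as $\,g|_{\mathcal{H}}\,$ read off on the level set $\,\vp=\vp_*$; the scaling law $\,d_v\hs g(w,w)=2\si\hs g(w,w)\,$ then reproduces $\,g|_{\mathcal{H}}=(\vp_*-\yj)^{-1}(\vp-\yj)\hs h$, while the vertical component of $\,[w,w\hh']\,$ produces a connection whose curvature matches $\,\varOmega=-a(\vp_*-\yj)^{-1}\omega^{(h)}$. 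The constant $\,a\,$ and the boundary conditions~(iv) come from the smooth closing-up of the fibres at the two critical sections: by Remark~\ref{kilxp}, near $\,\vp^{-1}(\vp_{\mathrm{min}})\,$ and $\,\vp^{-1}(\vp_{\mathrm{max}})\,$ the Kil\-ling field $\,u\,$ is linearized as a rotation whose opposite weights at the two poles of each $\,\bbCP^1$ fibre force $\,Q=0\,$ and $\,dQ/d\vp=\pm2a\,$ there, with a common $\,a>0$; positivity of $\,Q\,$ on $\,\bbI^\circ\,$ and of $\,g|_{\mathcal{H}}\,$ then forces $\,\yj\in\bbRP^1\nh\smallsetminus\bbI$.

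With conditions (i)--(vii) verified, the structure equations (\ref{nab}) hold both for $\,(M,g,\vp)\,$ and for the model built from these data, so the two are isometric and bi\-hol\-o\-mor\-phic over $\,M'\nnh$, and I would finish by extending this identification across the two critical sections, compact submanifolds of real co\-di\-men\-sion $\,2$, by means of Lemma~\ref{ismex}. The main obstacle is precisely this globalization step: proving that the leaf space is genuinely a compact Riemann surface and $\,M\,$ the projective compactification of a line bundle over it, that the invariants fit (i)--(vii) (including $\,C^\infty$-smoothness of $\,\yj\,$ as an $\,\bbRP^1$-val\-ued map across the stratum $\,\si=0$, where (\ref{nab}) degenerates and holds only on a dense open set), and that the comparison map extends smoothly over the critical sections; by contrast, the eigenvalue decomposition and the Riccati relation giving the basic invariant $\,\yj\,$ are short consequences of the identities already established.
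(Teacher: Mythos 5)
Your first two paragraphs---the splitting of $\,\nabla d\vp\,$ into the eigenvalue functions $\,\ta\,$ on $\,\mathcal{V}\,$ and $\,\si\,$ on $\,\mathcal{H}$, the Riccati relation $\,d_v\si=2\hh(\ta-\si)\hh\si\,$ extracted from (\ref{dvd}), and the resulting first integral $\,\yj=\vp-Q/(2\si)\,$ with $\,d_v\yj=d_u\yj=0$---reproduce exactly the content of Section~\ref{ri} (formulas (\ref{dps}), (\ref{dvp}) and Lemma~\ref{dvgez}) and are correct. The gap is the globalization, which you yourself flag as ``the main obstacle'' and then do not resolve. You propose to realize $\,\sd\,$ as the leaf space of the foliation $\,\mathcal{V}$, compactifying each leaf to a $\,\bbCP^1\nnh$; nothing in your sketch shows that this leaf space is a Hausdorff smooth compact surface, or that $\,M\to\sd\,$ is a locally trivial holomorphic $\,\bbCP^1$ bundle, or that the comparison with the model extends across the critical set. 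The paper avoids the leaf space entirely: it takes $\,\sd\,$ to be the \emph{minimum level} of $\,\vp\,$ (a compact complex curve), sets $\,\mathcal{L}=N\sd\,$ with the normal connection and $\,h\,$ as in (\ref{het}), and uses the Morse\hs-Bott analysis of Sections~\ref{mb} and~\ref{cm} (Theorem~\ref{qfcot}, Lemma~\ref{mrsbt}) to prove that $\,\mathrm{Exp}^\perp$ maps $\,N^\ds\nh\sd\,$ diffeomorphically onto $\,M\smallsetminus\sd^*\nnh$; the explicit map $\,F=\mathrm{Exp}^\perp\nnh\circ\hs\theta\,$ is then verified to be a biholomorphic isometry by the computations of Lemmas~\ref{fstqv}--\ref{fshgj}, and extended by Lemma~\ref{ismex}. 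This is precisely the step your outline needs and does not supply.

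Two subsidiary assertions are also not established by what you write. First, ``positivity of $\,Q\,$ on $\,\bbI^\circ$ and of $\,g|_{\mathcal{H}}$ forces $\,\yj\in\bbRP^1\nh\smallsetminus\bbI$'' only excludes $\,\yj\in\bbI^\circ\nnh$: the endpoint values $\,\yj=\vp_{\mathrm{min}}\,$ or $\,\vp_{\mathrm{max}}\,$ are compatible with positivity of $\,(\vp_*-\yj)^{-1}(\vp-\yj)\,$ on the open interval and must be excluded separately. The paper does this in Lemma~\ref{cdone}(ii) by an l'Hospital evaluation of $\,\si\,$ at a critical point $\,y\,$ with $\,\yj(y)=\vp(y)$, which forces $\,\si(y)=\ta(y)$, hence a single Hessian eigenvalue, hence a point critical manifold, hence (via Lemma~\ref{dvgez}(b)--(c)) a special K\"ah\-ler-Ric\-ci potential---the same dichotomy that guarantees both critical manifolds are curves, which you need for $\,\mathcal{L}\,$ to be a line bundle over a surface at all. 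Second, the common constant $\,a\,$ in condition (iv) does not follow from your ``opposite weights'' heuristic; the paper obtains it from periodicity of the flow of $\,u\,$ (Lemma~\ref{circl}, citing \cite{derdzinski-maschler-06}), and the $\,C^\infty$ extension of $\,\yj\,$ across the critical set, which you list among the unproved obstacles, is carried out in the proof of Lemma~\ref{dvgez} using the fibration of a punctured tubular neighborhood over the critical manifold together with Lemma~\ref{mrbgg}(iv).
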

A proof of Theorem~\ref{clssf} is given in Sections~\ref{pf} and~\ref{sp}.

\section{One-jets of Geodesic Vector Fields at Their Zeros}\label{gv}
As a first step toward the proof of Theorem~\ref{clssf}, we now proceed to 
establish one general property of geodesic vector fields, defined in 
Section~\ref{se}.
\begin{remark}\label{curve}If $\,\ve>0\,$ and a curve 
$\,[\hs0,\ve]\ni t\mapsto v(t)\in V\,$ in a normed vector space $\,V$ with 
$\,\,\dim V<\infty\,$ is differentiable at $\,t=0$, while 
$\,v(0)=0\ne w$, where $\,w=\dot v(0)$ and $\,\dot v=dv/dt$, then 
$\,v(t)/|v(t)|\,\to\,w/|w|\,$ as\/ $\,t\to0^+\nnh$. (In fact, 
$\,v(t)/t\to\dot v(0)=w$ as $\,t\to0^+\nnh$. Thus, $\,|v(t)|/t\to|w|\,$ and 
$\,v(t)/|v(t)|=[v(t)/t][|v(t)|/t]^{-1}\hh\to\,w/|w|$.)
\end{remark}
\begin{lemma}\label{diagz}Let\/ $\,v\,$ be a geodesic vector field on a 
manifold\/ $\,M\,$ with a fixed connection\/ $\,\nabla\nh$. If\/ $\,y\in M\,$ 
and\/ $\,v_y=0$, then, for\/ $\,\nv=[\nabla\nh v]_y:T_yM\to T_yM\,$ and 
some\/ $\,\lar\in\bbR$, we have\/ $\,\nv^2=\lar\nv$, that is, one of the 
following two cases occurs\/{\rm:}
\begin{enumerate}
  \def\theenumi{{\rm\roman{enumi}}}
\item $\nv\,$ is diagonalizable, and either it is a multiple of the 
identity, or it has exactly two distinct eigenvalues, one of which is zero.
\item $\nv\,$ is not diagonalizable and\/ $\,\nv^2\nh=0$. 
\end{enumerate}
\end{lemma}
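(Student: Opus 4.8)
The plan is to localize at the zero $y$, extract from the geodesic equation a single algebraic constraint on $E=[\nabla\nh v]_y$ by a leading-order argument, and then finish with elementary linear algebra. First I would rewrite the defining relation (\ref{nvv}): on the open set $M'$ it reads $\nabla_{\!v}v=\psi v$, which is the same as $(\nabla_{\!v}v)\wedge v=0$ there; and since both $\nabla_{\!v}v$ and $v$ vanish wherever $v=0$, the two-vector field $(\nabla_{\!v}v)\wedge v$ in fact vanishes identically on $M$. This reformulation is the crucial device, because it disposes of the factor $\psi$, which is defined only on $M'$ and could behave badly as one approaches $y$.

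Next I would choose local coordinates centered at $y$ and trivialize $\tm$ by the coordinate frame. Since $v_y=0$, the Christoffel contribution to $[\nabla\nh v]_y$ drops out, so $E$ is represented by the Jacobian $[\partial_jv^i(y)]$ and $v(x)=Ex+O(|x|^2)$; the same vanishing of $v$ at $y$ renders the Christoffel terms in $\nabla_{\!v}v$ higher order, giving $(\nabla_{\!v}v)(x)=E^2x+O(|x|^2)$. Evaluating the identity $(\nabla_{\!v}v)\wedge v=0$ along a ray $x=tw$ and dividing by $t^2$, I would let $t\to0^+$ to obtain $E^2w\wedge Ew=0$ for every $w\in T_yM$. (Remark~\ref{curve} could instead be invoked to identify the limiting direction of $v$ along the ray, but the wedge form makes the passage to the limit immediate.) Thus every nonzero vector in the image of $E$ is an eigenvector of $E$.

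Finally I would show that one eigenvalue suffices: if two nonzero vectors $Ew_1,Ew_2$ were eigenvectors for distinct eigenvalues they would be linearly independent, yet $E(w_1+w_2)=Ew_1+Ew_2$, again lying in the image of $E$ and hence an eigenvector, would force the two eigenvalues to coincide. Therefore $Ex=ax$ for all $x$ in the image of $E$, that is, $E^2=aE$. The dichotomy (i)/(ii) is then read off from the minimal polynomial, which divides $t(t-a)$: if $a\ne0$ its roots are distinct, so $E$ is diagonalizable with eigenvalues in $\{0,a\}$, which is case (i); if $a=0$, then either $E=0$ (a zero multiple of the identity, still case (i)) or $E$ is a nonzero nilpotent with $E^2=0$, which is case (ii).

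The main obstacle is the leading-order and limit step: one must check carefully that the Christoffel and quadratic remainders are genuinely $O(|x|^2)$, so that after dividing by $t^2$ they vanish in the limit and only $E^2w\wedge Ew$ survives. Everything else is routine once $(\nabla_{\!v}v)\wedge v=0$ is used in place of the relation $\nabla_{\!v}v=\psi v$ with its potentially singular coefficient $\psi$.
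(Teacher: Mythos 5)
Your proof is correct, but it reaches the key algebraic identity by a genuinely different route from the paper. The paper keeps the relation (\ref{nvv}) in the form $\nabla_{\!v}v=\psi\hskip.4ptv$ and therefore must work entirely inside the set where $v\ne0$: it uses the inverse mapping theorem to build, for each prescribed nonzero $w$ in the image of $E$, a curve $x(t)=\varPi^{-1}(tw)$ along which $v\ne0$, normalizes the geodesic equation by $|v(x(t))|$, identifies the limiting direction of $v$ via Remark~\ref{curve}, and obtains $Ew=a_w^{\phantom i}w$ with $a_w^{\phantom i}$ arising as the limit of the coefficient $\psi$ along that curve. You instead replace $\nabla_{\!v}v=\psi\hskip.4ptv$ by the tensorial identity $(\nabla_{\!v}v)\wedge v=0$, which -- as you correctly observe -- holds on all of $M$ and eliminates the coefficient $\psi$ together with its potential blow-up near the zero set; a routine Taylor expansion along straight coordinate rays then yields $E^2w\wedge Ew=0$ for every $w\in T_yM$, hence the same conclusion that every nonzero vector in the image of $E$ is an eigenvector. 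Your remainder estimates are sound: with $v(x)=Ex+O(|x|^2)$ the Christoffel terms contribute $O(|x|^2)$ to $\nabla_{\!v}v$, so dividing by $t^2$ and letting $t\to0$ is legitimate. What your version buys is elementarity and robustness (no inverse mapping theorem, no curve construction, no existence-of-limit argument for $\psi$); what the paper's version buys is that the eigenvalue appears directly as the boundary value of the geometric coefficient $\psi$ along a curve approaching the zero. The concluding linear algebra -- a single common eigenvalue $a$ on the image, $E^2=aE$, and the dichotomy read off from the minimal polynomial dividing $t(t-a)$ -- is essentially identical in both arguments.
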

\begin{proof}We may assume that $\,\nv\ne0\,$ and identify a neighborhood of 
$\,y\,$ in $\,M\,$ with a neighborhood 
$\,\,U\,$ of $\,0\,$ in a vector space $\,V\nnh$, so that $\,y\,$ corresponds 
to $\,0$. This turns $\,\nabla\,$ into a connection in $\,TU\nh$. As $\,v=0\,$ 
at the point $\,0$, the operator $\,\nv\,$ is now the differential at $\,0\,$ 
of $\,v\,$ viewed as a mapping $\,\,U\nh\to V\nnh$. We also fix a vector 
subspace $\,V'\nnh\subset V\,$ of dimension $\,\mathrm{rank}\,\nv\,$ such that 
$\,\nv\,$ maps $\,V'$ isomorphically onto the image $\,\nv(V)$, and choose a 
linear projection $\,\pro:V\nh\to\nv(V)$. In view of the inverse mapping 
theorem, there exists a neighborhood $\,\,U'$ of $\,0\,$ in $\,V'$ such that 
$\,\,U'\nnh\subset U\,$ and $\,\varPi=\,\pro\hh\circ\hs v:U'\to U''$ is a 
diffeomorphism onto a neighborhood $\,\,U''$ of $\,0\,$ in $\,\nv(V)$. Thus, 
$\,\varPi(0)=0\,$ and $\,d\varPi_0^{\phantom i}$ equals $\,\nv\,$ restricted 
to $\,V'\nnh$. 

Given any nonzero vector $\,w\in\nv(V)$, let $\,\ve>0\,$ be such that 
$\,tw\in U''$ for all $\,t\in[\hs0,\ve]$. We set $\,x(t)=\varPi^{-1}(tw)\,$ if 
$\,t\in[\hs0,\ve]$. Thus, $\,v(x(t))\ne0\,$ for $\,t\in(0,\ve]$, as 
$\,\pro\hs v(x(t))=\varPi(x(t))=tw\ne0$. We may now set 
$\,u(t)=v(x(t))/|v(x(t))|$, if $\,0<t\le\ve$, using a fixed norm 
$\,|\hskip3pt|\,$ in $\,V\nnh$, so that $\,u(t)\to w/|w|\,$ as $\,t\to0^+$ 
according to Remark~\ref{curve}, and an equality of the form (\ref{nvv}) holds 
at each $\,x(t)$, $\,t\in(0,\ve]$, with some function $\,\ta\,$ (defined only 
at points where $\,v\ne0$). Dividing both sides of that equality by 
$\,|v(x(t))|\,$ and setting $\,\lar(t)=\ta(x(t))$, we obtain 
$\,[\nabla_{\!u(t)}v]_{x(t)}\nh=\lar(t)u(t)$. Consequently, $\,\lar(t)\,$ has 
a limit $\,\lar_w^{\phantom i}$ as $\,t\to0^+\,$ and, taking the limits of 
both sides of the last relation, we get 
$\,[\nabla_{\!w}v]_0^{\phantom i}=\lar_w^{\phantom i}w$, that is, 
$\,\nv w=\lar_w^{\phantom i}w$. Every $\,w\in\nv(V)\smallsetminus\{0\}\,$ is 
thus an eigenvector of $\,\nv\,$ for some eigenvalue $\,\lar_w^{\phantom i}$, 
which is only possible if $\,\lar=\lar_w^{\phantom i}$ does not depend on 
$\,w$. Hence $\,\nv(V)\subset\hs\mathrm{Ker}\,(\nv-\lar)\,$ or, equivalently, 
$\,\nv^2-\lar\nv=(\nv-\lar)\nv=0$. If $\,\lar\ne0$, the subspaces 
$\,\mathrm{Ker}\,\nv\,$ and $\,\mathrm{Ker}\,(\nv-\lar)\,$ must, for 
dimensional reasons, be the summands in a direct-sum decomposition of 
$\,V\nh$. This leads to case (i). Hence, if $\,\nv\,$ is not diagonalizable, 
we have $\,\lar=0$, and (ii) follows.
\end{proof}

\section{Morse\hs-Bott Functions with Geodesic Gradients}\label{mb}
A {\it\hbox{Morse\hs-}\hskip0ptBott function\/} on a manifold $\,M\,$ is a 
$\,C^\infty$ function $\,\vp:M\to\bbR\,$ such that the connected components 
of the set of critical points of $\,\vp\,$ are mutually isolated 
sub\-man\-i\-folds of $\,M\,$ (called the {\it critical manifolds\/} of 
$\,\vp$), and the rank of the Hess\-i\-an of $\,\vp\,$ at every critical point 
$\,x\,$ is the co\-di\-men\-sion of the critical manifold containing $\,x$.
\begin{example}\label{kpamb}All Kil\-ling potentials are 
\hbox{Morse\hs-}\hskip0ptBott functions, and their critical manifolds are 
totally geodesic complex sub\-man\-i\-folds of the ambient K\"ah\-ler 
manifold. This is a well-known consequence of Remark~\ref{rhcom}(b) and 
Kobayashi's result \cite{kobayashi} mentioned in Remark~\ref{kilxp}. Cf.\ also 
\cite[Example 11.1 and Remark 2.3(iii-\hh c,\hs d)]{derdzinski-maschler-06}.
\end{example}
\begin{remark}\label{sqdst}The standard examples of 
\hbox{Morse\hs-}\hskip0ptBott functions are provided by homogeneous quadratic 
polynomials on fi\-\hbox{nite\hh-}\hskip0ptdi\-men\-sion\-al real vector 
spaces. The conclusion about the squared-norm function in Remark~\ref{gauss} 
now implies that $\,\vp\,$ of Example~\ref{geogr}(d) is a 
\hbox{Morse\hs-}\hskip0ptBott function.
\end{remark}
The next remark and lemma use the symbols $\,\mathrm{Exp}^\perp$ and 
$\,N^{\hh\ve}\nh\sd\,$ defined in Section~\ref{pr}.
\begin{remark}\label{invma}Given a critical manifold $\,\sd\,$ of a 
\hbox{Morse\hs-}\hskip0ptBott function $\,\vp\,$ on a manifold $\,M\,$ and a 
point $\,y\in\sd$, there exist a neighborhood $\,\sd'$ of $\,y\,$ in $\,\sd\,$ 
and $\,\ve\in(0,\infty)\,$ such that the domain of $\,\mathrm{Exp}^\perp$ 
contains $\,N^{\hh\ve}\nh\sd'$ and $\,\mathrm{Exp}^\perp$ maps 
$\,N^{\hh\ve}\nh\sd'$ dif\-feo\-mor\-phic\-al\-ly onto a neighborhood 
$\,\,U\,$ of $\,y\,$ in $\,M$, while $\,\navp\ne0\,$ everywhere in 
$\,\,U\smallsetminus\sd'\nnh$.

This is immediate from the inverse mapping theorem applied to 
$\,\mathrm{Exp}^\perp$ and the definition of a critical manifold.
\end{remark}
\begin{lemma}\label{mrbgg}Let\/ $\,y\in\sd$, for a critical manifold\/ 
$\,\sd\,$ of a nonconstant \hbox{Morse\hs-}\hskip0ptBott function\/ $\,\vp$ 
with a geodesic gradient on a Riemannian manifold\/ $\,(M,g)$.
\begin{enumerate}
  \def\theenumi{{\rm\roman{enumi}}}
\item The Hess\-i\-an\/ $\,\nabla d\vp\,$ at\/ $\,y\,$ has exactly one nonzero 
eigen\-value\/ $\,a$.
\item The eigen\-space corresponding to\/ $\,a\,$ in\/ {\rm(i)} is the normal 
space\/ $\,N_y\sd\,$ of\/ $\,\sd\,$ at\/ $\,y$.
\item For every sufficiently small\/ $\,\ve\in(0,\infty)\,$ there exists a 
neighborhood\/ $\,\,U\,$ of\/ $\,y\,$ in\/ $\,M$ such that the underlying 
\hbox{one\hh-}\hskip.7ptdi\-men\-sion\-al manifolds of the maximal integral 
curves of the restriction of\/ $\,v=\navp\,$ to\/ $\,\,U\smallsetminus\sd\,$ 
coincide with the length\/ $\,\ve\,$ open geodesic segments emanating from\/ 
$\,\sd\hh\cap U\,$ and normal to\/ $\,\sd$.
\item The gradient\/ $\,v=\navp\,$ is tangent to every nonconstant geodesic 
$\,[\hs0,b)\ni t\mapsto x(t)$ with\/ $\,x(0)=y\,$ and\/ 
$\,\dot x(0)\in N_y\sd$, where\/ $\,b\in(0,\infty\hh]$, and the set of\/ 
$\,t\in[\hs0,b)\,$ for which\/ $\,v_{x(t)}\nh=0\,$ is discrete.
\end{enumerate}
\end{lemma}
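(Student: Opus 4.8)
The plan is to deduce parts (i)--(ii) from the one-jet analysis of Lemma \ref{diagz}, applied to the gradient $v=\navp$ at the zero $y$, and then to build the geometric conclusions (iii)--(iv) out of the normal exponential map together with the geodesic-gradient property. First I would observe that $\nabla\nh v=\nabla d\vp$ is $g$-sym\-met\-ric (the Hess\-i\-an of a function), hence the en\-do\-mor\-phism $\nv=[\nabla\nh v]_y$ is diagonalizable. Lemma \ref{diagz} then forces $\nv^2=a\nv$ for some $a\in\bbR$, so $\nv$ has eigenvalues in $\{0,a\}$; since $v$ is a geodesic vector field and $y$ is a critical point of the nonconstant \hbox{Morse\hs-}\hskip0ptBott function $\vp$, case (i) of that lemma applies and $a\ne0$. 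The \hbox{Morse\hs-}\hskip0ptBott condition says the rank of $\nabla d\vp$ at $y$ equals the co\-di\-men\-sion of $\sd$, i.e.\ $\dim N_y\sd$; comparing this with $\mathrm{rank}\,\nv=\dim\nv(V)=\dim\mathrm{Ker}(\nv-a)$ identifies the $a$-eigen\-space with $N_y\sd$ (the kernel of $\nv$ being $T_y\sd$, as $\sd$ is a critical manifold). This gives (i) and (ii) at once.

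For (iii) I would invoke Remark \ref{invma} to fix $\sd'\ni y$ and $\ve$ so that $\mathrm{Exp}^\perp$ maps $N^{\hh\ve}\nh\sd'$ dif\-feo\-mor\-phic\-al\-ly onto a neighborhood $U$ with $\navp\ne0$ off $\sd'$. The key point is that, by (ii), the one-jet $\nv$ at $y$ annihilates $T_y\sd$ and is $a\ne0$ on $N_y\sd$, so by continuity the gradient $v$ points transverse\-ly into the normal directions near $\sd$. I would show that the geodesic-gradient condition (\ref{qeg})---equivalently (\ref{nvv}), $\nabla_{\!v}v=\ta v$---forces each integral curve of $v$ to be a re\-pa\-ram\-e\-trized geodesic, and that the ones starting near $\sd$ are precisely the normal geodesics: the image under $\mathrm{Exp}^\perp$ of the radial rays in the fibres $N_y^{\hh\ve}\nh\sd$. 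Matching the one-dimensional integral-curve manifolds with these radial geodesic segments is then a matter of checking that $v$ is everywhere tangent to them, which follows from the first line of the eigen\-value computation combined with the generalized Gauss lemma as in Remark \ref{gauss}.

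For (iv) the plan is to take an arbitrary nonconstant geodesic $t\mapsto x(t)$ with $x(0)=y$ and initial velocity $\dot x(0)\in N_y\sd$, and argue that $v$ stays tangent to it for all $t\in[\hs0,b)$. Near $t=0$ this is exactly part (iii). To propagate it I would use that $v$ is a geodesic vector field: along any such geodesic the quantity $v(x(t))$ satisfies a linear second-order ODE (coming from $\nabla_{\!\dot x}\dot x=0$ and $\nabla_{\!v}v=\ta v$), whose solution keeps $v$ proportional to $\dot x$ once they are proportional initially; equivalently, the locus where $v$ is tangent to the geodesic is both open and closed in $[\hs0,b)$. Finally, the zeros of $v$ along the curve are the critical points of $\vp\circ x$, a non\-triv\-ial real-analytic-type constraint: since $v=\ta^{-1}\nabla_{\!v}v$ is re\-para\-met\-rized-geodesic and $v$ is tangent to $x$, the function $t\mapsto g(v,v)_{x(t)}=Q$ becomes a function of $\vp(x(t))$ by (\ref{qeg}), and its vanishing set is governed by the simple zeros of $Q$; I would conclude that $\{t:v_{x(t)}=0\}$ is discrete.

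\medskip

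The main obstacle I anticipate is part (iv), specifically the global propagation along a geodesic that may leave the diffeomorphic normal neighborhood $U$: near $y$ everything is controlled by the one-jet and the Gauss lemma, but once $x(t)$ exits $U$ one can no longer appeal to $\mathrm{Exp}^\perp$ being a diffeomorphism. The delicate point is to show tangency of $v$ to the whole geodesic using only the intrinsic identity $\nabla_{\!v}v=\ta v$ together with $\nabla_{\!\dot x}\dot x=0$, and to rule out accumulation of zeros of $v$ along $x$—the latter requiring that the reparametrization relating $v$ to the geodesic $\dot x$ does not degenerate on a set with a limit point, which is where the discreteness claim really has to be earned rather than read off from the local picture.
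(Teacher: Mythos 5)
Your handling of (i)--(ii) is essentially the paper's own argument: self-adjointness of $\,[\nabla\nh v]_y$ excludes case (ii) of Lemma~\ref{diagz}, the Morse\hs-Bott rank condition identifies the $\,a$-eigen\-space with $\,N_y\sd$, and $\,a\ne0\,$ because a co\-di\-men\-sion-zero critical manifold would be open and closed in $\,M$, forcing $\,\vp\,$ to be constant. The first genuine gap is in (iii). The substantive claim there is that $\,\navp\,$ is tangent to all sufficiently short geodesics normal to $\,\sd$, and your justification---``the first line of the eigenvalue computation combined with the generalized Gauss lemma''---does not deliver it: the Gauss lemma concerns the gradient of the (squared) distance from $\,\sd$, and $\,\vp\,$ is not that function, so nothing yet forces $\,\navp\,$ to point in radial directions; that $\,\vp\,$ is in fact a function of the distance from $\,\sd\,$ is essentially what has to be proved. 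The paper does not prove this either, but imports it from \cite[Lemma~8.2]{derdzinski-maschler-06}. A self-contained route would be to follow an integral curve of $\,v\,$ backward: by (i) and the Morse\hs-Bott condition its length down to the critical level, $\,\int Q^{-1/2}d\vp$, is finite, so the underlying geodesic reaches a point of $\,\sd\,$ with some velocity $\,w$, and the limit computation of Lemma~\ref{diagz}/Remark~\ref{curve} shows $\,w\,$ is an eigenvector for the nonzero eigenvalue, hence $\,w\in N_y\sd$; uniqueness of geodesics then matches integral curves with normal rays. Some such argument is needed; as written, this step is circular.

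Part (iv) is where your proposed mechanism actually fails. There is no second-order linear ODE satisfied by $\,v\circ x\,$ along the geodesic: $\,\nabla_{\!\dot x}(v\circ x)=[\nabla\nh v]\hh\dot x\,$ involves the full covariant derivative of $\,v\,$ at $\,x(t)$, which is not determined by data along the curve, so proportionality cannot be propagated by ODE uniqueness. Likewise, the tangency locus is closed, and open at parameters where $\,v\ne0\,$ (a nontrivial integral curve of $\,v\,$ is a re\-pa\-ram\-e\-trized geodesic sharing a point and direction with $\,x$), but at a zero of $\,v\,$ openness is precisely the issue, and your sketch offers nothing there---as you concede in your closing paragraph. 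The paper's resolution is to re-apply (iii) at every zero encountered: if $\,t_{\mathrm{sup}}<b\,$ is the supremum of good parameters, then $\,v=0\,$ at $\,y\hh'\nh=x(t_{\mathrm{sup}})\,$ (otherwise tangency would extend past $\,t_{\mathrm{sup}}$); the point $\,y\hh'$ lies on some critical manifold, the segment just before $\,t_{\mathrm{sup}}$ is an integral curve of $\,v\,$ and hence, by (iii) applied at $\,y\hh'\nnh$, part of a normal geodesic through $\,y\hh'\nnh$, so $\,x\,$ continues normally past $\,y\hh'$ with $\,v\,$ again tangent---a contradiction. The same local picture from (iii) and Remark~\ref{invma} is what prevents zeros of $\,v\,$ from accumulating along the geodesic; your appeal to ``simple zeros of $\,Q$'' as a function of $\,\vp\,$ does not establish discreteness, since that function is only defined where $\,d\vp\ne0\,$ and its boundary behavior is exactly what is in question.
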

\begin{proof}Case (ii) in Lemma~\ref{diagz} for $\,v=\navp\,$ is excluded by 
self-ad\-joint\-ness of $\,B=[\nabla\nh v]_y$. Now (i) and (ii) are immediate 
from Lemma~\ref{diagz}(i) and the rank condition in the definition of a 
\hbox{Morse\hs-}\hskip0ptBott function. Note that $\,B\ne0$, for otherwise 
$\,\sd\,$ would be both a sub\-man\-i\-fold of co\-di\-men\-sion $\,0\,$ and a 
closed subset of $\,M$, which is not possible as $\,\sd\ne M$.

Assertion (iii) is a trivial consequence of Remark~\ref{invma}, since (ii) and 
\cite[Lemma~8.2]{derdzinski-maschler-06} imply that $\,\navp$ is tangent to 
all sufficiently short geodesic segments normal to $\,\sd$.

For $\,b\,$ and $\,x(t)\,$ as in (iv), let $\,t_{\mathrm{sup}}$ be the 
supremum of $\,t'\in(0,b)\,$ such that $\,v\,$ is tangent to the geodesic 
segment $\,[\hs0,t'\hh]\ni t\mapsto x(t)\,$ and the set of 
$\,t\in[\hs0,t'\hh]\,$ with $\,v_{x(t)}\nh=0\,$ is finite. By (iii), 
$\,t_{\mathrm{sup}}>0$.

Suppose now that $\,t_{\mathrm{sup}}<b$. The word `supremum' then can be 
replaced with `maximum' since, whether $\,v\ne0\,$ or $\,v=0\,$ at the point 
$\,x(t_{\mathrm{sup}})$, the parameter values 
$\,t\in[\hs0,t_{\mathrm{sup}})\,$ with $\,v_{x(t)}\nh=0\,$ cannot form a 
strictly increasing sequence that converges to $\,t_{\mathrm{sup}}$. (In the 
former case this follows from continuity of $\,v$, in the latter from (iii) 
applied to $\,y\hh'\nh=x(t_{\mathrm{sup}})$ and the critical manifold 
containing $\,y\hh'\nnh$, rather than $\,y\,$ and $\,\sd$.) Next, maximality 
of $\,t_{\mathrm{sup}}$ gives $\,v=0\,$ at $\,y\hh'\nnh$. Applying (iii), 
again, to $\,y\hh'$ instead of $\,y$, we see that $\,v\,$ is tangent to some 
segment $\,[\hs0,t'\hh]\ni t\mapsto x(t)\,$ with $\,t'>t_{\mathrm{sup}}$. The 
resulting contradiction shows that $\,t_{\mathrm{sup}}=b$, completing the 
proof.
\end{proof}
\begin{remark}\label{dtzez}For $\,(M,g),\vp,\sd,$ and $\,y\,$ satisfying the 
hypotheses of Lemma~\ref{mrbgg}, and any unit-speed geodesic 
$\,t\mapsto x(t)\,$ such that $\,x(0)=y\,$ and $\,\dot x(0)\in N_y\sd$, 
writing $\,\dot\vp(t)=d\hh[\hh\vp(x(t))]/dt$, we get, from 
Lemma~\ref{mrbgg}(ii),
\begin{equation}\label{dtz}
\dot\vp(0)\,=\,0\,\ne\,\ddot\vp(0)\,=\,a\hh,\hskip6pt\mathrm{with}\hskip6pta
\hskip6pt\mathrm{as\ in\ Lemma~\ref{mrbgg}(i).}
\end{equation}
\end{remark}

\section{An $\,\bbRP^1\nnh$-valued Invariant}\label{ri}
Any nonconstant Kil\-ling potential with a geodesic gradient on a K\"ah\-ler 
surface $\,(M,g)\,$ naturally gives rise to a $\,C^\infty$ mapping 
$\,\yj:M\to\bbRP^1\nnh$, described in Lemma~\ref{dvgez} below. We begin by 
introducing some notations.

In the remainder of the paper, except Section~\ref{cm}, $\,\vp\,$ is always 
assumed to be a nonconstant Kil\-ling potential with a geodesic gradient on a 
K\"ah\-ler manifold $\,(M,g)$ of complex dimension $\,m\ge2$. We write
\begin{equation}\label{not}
v=\navp\hs,\hskip27ptu=Jv\hs,\hskip27ptQ=g(v,v)\hs.
\end{equation}
The open set $\,M'\nh\subset M\,$ on which $\,v\ne0\,$ is connected and 
dense in $\,M$, cf.\ \cite[Remark~2.3(ii)]{derdzinski-maschler-06}. On 
$\,M'$ one has the distributions $\,\mathcal{V}=\mathrm{Span}\hs(v,u)\,$ 
and $\,\mathcal{H}=\mathcal{V}^\perp\nnh$. At any point of $\,M'\nnh$, nonzero 
vectors in $\,\mathcal{V}\,$ are eigenvectors of $\,\nabla\nh v\,$ for the 
eigenvalue function $\,\ta\,$ appearing in (\ref{nvv}). Furthermore,
\begin{equation}\label{tps}
\begin{array}{l}
\mathrm{a)}\hskip6pt2\ta\,=\,dQ\hh/\nh d\vp\hh,\hskip19pt\mathrm{b)}
\hskip6ptd_v\vp\,=\,Q\hh,\hskip19pt\mathrm{c)}\hskip6ptd_vQ\,=\,2\ta\hh Q\hh,\\
\mathrm{d)}\hskip6ptg(v,v)\,=\,g(u,u)\,=\,Q\hh,\hskip18ptg(v,u)\,=\,0\hh,
\end{array}
\end{equation}
where (\ref{tps}.a) makes sense in view of the line following (\ref{qeg}). 
In fact, (\ref{not}) yields (\ref{tps}.b) and (\ref{tps}.d), while 
(\ref{tnd}), (\ref{nvv}) and (\ref{not}) give $\,dQ=2\ta\,d\vp$, so that 
(\ref{tps}.a) and (\ref{tps}.c) follow.

If $\,m=2$, nonzero vectors in $\,\mathcal{H}\,$ are also eigenvectors of 
$\,\nabla\nh v$, for the eigenvalue function $\,\si\,$ given by 
$\,2\si=\Delta\vp-\hs2\ta$. Thus,
\begin{equation}\label{dps}
\mathrm{i)}\hskip6pt\Delta\vp\,=\,2\hh(\ta+\si)\hh,\hskip19pt\mathrm{ii)}
\hskip6pt|\nabla\nh v|^2\,=\,\hs2\hh(\ta^2\nh+\si^2)\hh.
\end{equation}
(The vec\-tor-bun\-dle morphism $\,\nabla\nh v:\tm\to\tm\,$ is 
com\-plex-lin\-e\-ar and Her\-mit\-i\-an at every point; see 
Section~\ref{kp}.) Since $\,\Delta\vp=\mathrm{div}\hskip2ptv$, 
(\ref{dvd}) combined with (\ref{nvv}) implies, whenever $\,m\ge2$, that 
$\,d_v\hs\Delta\vp=2\hs(d_v\ta+\ta\Delta\vp-|\nabla\nh v|^2\hh)$. 
Consequently, by (\ref{dps}),
\begin{equation}\label{dvp}
d_v\hs\si\,=\,2\hh(\ta-\si)\hh\si\hskip18pt\mathrm{if}\hskip10ptm=2\hh.
\end{equation}
\begin{lemma}\label{dvgez}For any nonconstant Kil\-ling potential\/ $\,\vp\,$ 
with a geodesic gradient on a K\"ah\-ler surface\/ $\,(M,g)$, there exists a 
unique\/ $\,C^\infty$ mapping\/ $\,\yj:M\to\bbRP^1$ such that, with the 
conventions of Remark\/~{\rm\ref{rpone}}, 
$\,\yj=\vp-Q\hs/\nh(\Delta\vp-\hs2\ta)\,$ on\/ $\,M'\nnh$. In addition,
\begin{enumerate}
  \def\theenumi{{\rm\alph{enumi}}}
\item At every point\/ $\,x\in M$, the vectors\/ $\,v_x\hs$ and\/ $\,u_x\hh$ 
lie in\/ $\,\mathrm{Ker}\hskip2.7ptd\yj_x$.
\item $\yj\,$ is constant along every geodesic issuing from a critical 
manifold\/ $\,\sd\,$ of $\,\vp\,$ in a direction normal to\/ $\,\sd$, cf.\ 
Example\/~{\rm\ref{kpamb}}.
\item $\yj\,$ is constant on\/ $\,M\,$ if and only if\/ $\,\vp\,$ is a 
special K\"ah\-\hbox{ler\hs-}\hskip0ptRic\-ci potential.
\end{enumerate}
\end{lemma}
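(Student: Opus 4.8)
The plan is to construct $\,\yj\,$ first on the dense open set $\,M'\nnh$, where $\,v\ne0$, and then show it extends smoothly across the critical set $\,M\smallsetminus M'\nnh$. On $\,M'\nnh$, the defining formula $\,\yj=\vp-Q\hs/\nh(\Delta\vp-\hs2\ta)\,$ must be read in $\,\bbRP^1\nnh$, since the denominator $\,\Delta\vp-\hs2\ta=2\hh\si\,$ (by (\ref{dps}.i)) may vanish. The clean way to do this is to observe that $\,\yj\,$ is the image in $\,\bbRP^1\,$ of the pair $\,[\hh\vp\hs\si-\si\hh\vp+\ldots]$; more precisely, I would write $\,\yj\,$ in homogeneous coordinates as $\,[\hs2\hh\si\hh\vp-Q\hs,\;2\hh\si\hs]$, so that the point of $\,\bbRP^1\,$ is well defined and $\,C^\infty\,$ on all of $\,M'\nnh$ precisely because the two homogeneous coordinates $\,2\hh\si\hh\vp-Q\,$ and $\,2\hh\si\,$ are smooth functions that never vanish simultaneously: indeed $\,2\hh\si=0\,$ forces $\,Q\ne0\,$ on $\,M'\nnh$ (as $\,Q=g(v,v)>0\,$ there), hence the first coordinate is $\,-Q\ne0$. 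This already gives a well-defined $\,C^\infty$ map $\,\yj:M'\to\bbRP^1\,$ and shows it equals $\,\infty\,$ exactly where $\,\si=0$.

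Next I would prove assertion (a) on $\,M'\nnh$, namely $\,d_v\yj=d_u\yj=0$. For $\,d_v\yj\,$ the natural route is to differentiate the two homogeneous coordinates along $\,v\,$ and check the resulting pair is proportional to the original pair, which is exactly the condition that $\,[\hs2\hh\si\hh\vp-Q\hs,\;2\hh\si\hs]\,$ be $\,v$-con\-stant in $\,\bbRP^1\nnh$. This reduces to the identities already in hand: $\,d_v\vp=Q\,$ (\ref{tps}.b), $\,d_vQ=2\ta\hh Q\,$ (\ref{tps}.c), $\,2\ta=dQ/d\vp\,$ (\ref{tps}.a), and crucially $\,d_v\hs\si=2\hh(\ta-\si)\hh\si\,$ (\ref{dvp}). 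A short computation should show both coordinates differentiate to $\,2\hh\ta\,$ times themselves, giving $\,v$-con\-stan\-cy; this is where (\ref{dvp}) does the real work. For $\,d_u\yj=0\,$ I would use that $\,u=Jv\,$ and that $\,\vp,Q,\si,\ta\,$ are all constant along $\,u$: since $\,u\,$ is a Killing field (Remark~\ref{rhcom}(b)) preserving $\,\vp$, it preserves every object built naturally from $\,\vp\,$ and $\,g$, in particular $\,Q\,$ and $\,\Delta\vp\,$ and $\,\ta=\tfrac12\hh dQ/d\vp\,$ and hence $\,\si$; so both homogeneous coordinates are $\,u$-in\-var\-i\-ant and (a) follows.

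The main obstacle is the smooth extension across the critical set, which also yields (b). Fix a critical manifold $\,\sd\,$ and $\,y\in\sd$. By Example~\ref{kpamb} and Lemma~\ref{mrbgg}, near $\,y\,$ the gradient $\,v\,$ is tangent to the normal geodesics emanating from $\,\sd$, and along such a unit-speed geodesic $\,t\mapsto x(t)\,$ with $\,\dot x(0)\in N_y\sd\,$ we have $\,\dot\vp(0)=0\ne\ddot\vp(0)=a\,$ by Remark~\ref{dtzez}. The strategy is to compute the limit of the two homogeneous coordinates as $\,t\to0^+\nnh$ and show the resulting point of $\,\bbRP^1\,$ is finite and independent of the chosen normal direction, and that the map so defined is $\,C^\infty\,$ in a neighborhood of $\,\sd$. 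Since $\,v\,$ is tangent to these geodesics and $\,\yj\,$ is $\,v$-con\-stant on $\,M'\nnh$ by (a), the value of $\,\yj\,$ along the punctured geodesic is constant in $\,t$, so its limit is forced; assertion (b) is then simply the statement that this constant value is attained, and matching the limits along all normal directions at $\,y\,$ (together with $\,u$-in\-var\-i\-ance) pins down a single value $\,\yj(y)\,$ and shows the extension is well defined and smooth. Concretely, near $\,\sd\,$ one expands $\,Q\,$ and $\,\si\,$ in the geodesic parameter: $\,Q\sim a^2 t^2\,$ and $\,\si\sim\ta\sim a\,$ to leading order, so $\,Q/(2\hh\si)\to0\,$ and $\,\yj\to\vp(y)=\vp_{\mathrm{crit}}$, a finite value; I would make this rigorous using the normal-exponential chart of Remark~\ref{invma} and the Morse-Bott normal form, so that $\,\yj\,$ extends continuously, and then conclude $\,C^\infty$-smoothness from smoothness of the homogeneous coordinates in the chart.

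Finally, part (c) follows by combining the extended formula with Remark~\ref{cmpre} and Remark~\ref{skrsu}: the relation $\,\Delta\vp=(\vp-\yj)^{-1}Q+dQ/d\vp\,$ rearranges, using $\,2\ta=dQ/d\vp\,$, to exactly $\,\yj=\vp-Q/(\Delta\vp-2\ta)$, so $\,\yj\,$ is constant iff $\,\Delta\vp\,$ is a function of $\,\vp\,$ alone (together with $\,Q$, already a function of $\,\vp$), which by Remark~\ref{skrsu} is precisely the condition for $\,\vp\,$ to be a special K\"ah\-ler-Ric\-ci potential. Uniqueness of the smooth extension is automatic, since $\,M'$ is dense and a continuous map is determined by its values on a dense set.
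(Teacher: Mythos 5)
Your construction of $\,\yj\,$ on $\,M'$ and the proof of (a) there are essentially the paper's argument, and they are correct: the homogeneous pair $\,[\hs2\si\vp-Q,\,2\si\hs]\,$ never vanishes on $\,M'$ because $\,Q>0\,$ there, the determinant test for $\,d_v\yj=0\,$ does close using (\ref{tps}) and (\ref{dvp}) (though the common factor is $\,2(\ta-\si)$, not $\,2\ta$, so ``both coordinates differentiate to $\,2\ta\,$ times themselves'' is literally false while the proportionality you actually need does hold), and the Killing-field argument for $\,d_u\yj=0\,$ and your treatment of (c) and of uniqueness are fine. The genuine gap is in the extension across the critical set, which is the substance of the lemma. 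Your asymptotics there are wrong: at a point $\,y\,$ of a critical manifold $\,\sd\,$ of complex dimension $\,1$ --- by Lemma~\ref{cdone}(ii-a) the only case in which $\,\vp\,$ is not a special K\"ah\-ler-Ric\-ci potential --- the Hessian has eigenvalue $\,a\,$ exactly on the two-real-dimensional normal space and $\,0\,$ on $\,T_y\sd$, so $\,\Delta\vp\to2a\,$ while $\,2\ta\to2a$, hence $\,2\si=\Delta\vp-2\ta\to0$; thus $\,\si\,$ tends to $\,0$, not to $\,a$, and $\,Q/(2\si)\,$ is a $\,0/0\,$ form whose limit equals $\,\vp(y)-\yj(y)$, in general nonzero. (Your expansion $\,\si\sim a\,$ holds only when $\,\sd\,$ is a single point, where $\,\Delta\vp\to4a$.) Had $\,\yj\,$ tended to $\,\vp(y)\,$ as you claim, $\,\yj\,$ would take the value $\,\vp_{\mathrm{min}}\,$ or $\,\vp_{\mathrm{max}}\,$ on the critical manifolds, which by Lemma~\ref{cdone}(ii-b) forces $\,\vp\,$ to be a special K\"ah\-ler-Ric\-ci potential and would contradict the non-special examples of Section~\ref{fe}. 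For the same reason your route to smoothness fails: both homogeneous coordinates vanish on the critical set, so their smoothness in the normal chart determines no point of $\,\bbRP^1$ there.

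The missing idea, which is how the paper proceeds, is to show that $\,\yj\,$ is constant on each fibre of the punctured normal-disk bundle $\,U\smallsetminus\sd'$ over $\,\sd'$ from Remark~\ref{invma}; then $\,\yj\,$ factors through the bundle projection, and the smooth extension is tautological, with no limit computation at all. Constancy along each punctured radial geodesic comes from (a) for $\,v\,$ together with Lemma~\ref{mrbgg}(iii)--(iv); to pass from individual rays to the whole connected, two-real-dimensional punctured-disk fibre one uses that $\,d\yj\,$ also annihilates $\,u$, since $\,v\,$ and $\,u\,$ span the tangent spaces of the fibre wherever $\,v\ne0$, making $\,\yj\,$ locally constant on it. You mention ``$u$-invariance'' in passing but then rest the matching of the limits entirely on the erroneous expansion, so this key step is absent. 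Assertion (b) is then obtained, as in the paper, from (a), Lemma~\ref{mrbgg}(iv) and continuity of the already-extended $\,\yj\,$ across the discrete zeros of $\,v\,$ along a normal geodesic, rather than being merely ``the statement that the constant value is attained.''
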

\begin{proof}We begin by establishing (a) and (c) for $\,M'$ rather than 
$\,M$. Clearly, (a) holds if $\,x\,$ lies in the interior of the set on which 
$\,\yj=\infty$. On the set where $\,\yj\ne\infty$, treating 
$\,\yj=\vp-Q\hs/\nh(2\si)\,$ as a real-val\-ued function, we clearly have 
$\,d_u\yj=0\,$ since $\,u\,$ is a Kil\-ling field and $\,d_u\vp=0\,$ by 
(\ref{not}) combined with (\ref{tps}.d), while $\,d_v\yj=0\,$ due to 
(\ref{tps}.b), (\ref{tps}.c) and (\ref{dvp}). As the union of the two open 
sets is dense in $\,M'\nnh$, (a) on $\,M'$ follows.

To prove (c) on $\,M'\nnh$, assume first that $\yj:M'\nh\to\bbRP^1$ is 
constant. Both when $\,\yj=\infty\,$ (and so $\,\Delta\vp=\hs2\ta$), and 
when $\,\yj\ne\infty$, this implies that $\,\Delta\vp\,$ is, locally in 
$\,M'\nnh$, a function of $\,\vp$, since so are $\,Q\,$ and $\,\ta\,$ by 
(\ref{qeg}) and (\ref{tps}.a). In view of Remark~\ref{skrsu}, $\,\vp\,$ then 
is a special K\"ah\-\hbox{ler\hs-}\hskip0ptRic\-ci potential. On the other 
hand, if $\,\vp\,$ is a special K\"ah\-\hbox{ler\hs-}\hskip0ptRic\-ci 
potential, we either have $\,\si=0\,$ identically on $\,M'\nnh$, or 
$\,\si\ne0\,$ everywhere in $\,M'$ \cite[Lemma 12.5]{derdzinski-maschler-03}. 
As $\,\yj=\vp-Q\hs/\nh(2\si)$, in the former case $\,\yj=\infty$, and in the 
latter $\,\yj\,$ is a real constant \cite[Lemma 12.5]{derdzinski-maschler-03}, 
which yields (c).

We now show that $\yj:M'\nh\to\bbRP^1$ has a $\,C^\infty$ extension to $\,M$. 
To this end, let $\,\sd\,$ be the critical manifold of $\,\vp\,$ containing a 
given point $\,y\in M\smallsetminus M'\nnh$, cf.\ Example~\ref{kpamb}. For  
$\,\sd'\nnh,\ve\,$ and $\,\,U\,$ chosen as in Remark~\ref{invma}, 
$\,\,U\smallsetminus\sd'$ is a bundle over $\,\sd'$ with fibres which are 
\hbox{even\hh-}\hskip.7ptdi\-men\-sion\-al (Example~\ref{kpamb}), and hence 
connected, punctured balls. By (a) for $\,v\,$ along with 
Lemma~\ref{mrbgg}(iv), the $\,C^\infty$ mapping 
$\,\yj:U\smallsetminus\sd'\nh\to\bbRP^1$ is constant on each fibre, so that it 
has an obvious $\,C^\infty$ extension to $\,\,U$, as required.

Finally, Lemma~\ref{mrbgg}(iv) and (a) for $\,v\,$ imply (b).
\end{proof}
For $\,(M,g)\,$ and $\,\vp\,$ constructed in Section~\ref{fe}, $\,\yj\,$ used 
in the construction, when viewed as a mapping $\,M\to\bbRP^1\nnh$, coincides 
with $\,\yj\,$ defined in Lemma~\ref{dvgez}. This is clear from 
(\ref{nab}), (\ref{dte}) and (\ref{tps}.a).

We will show later (Lemma~\ref{cdone}) that, if $\,M\,$ in Lemma~\ref{dvgez} 
is compact, the values of $\,\yj\,$ lie in 
$\,\bbRP^1\smallsetminus\bbI^\circ\nnh$, where 
$\,\bbI^\circ\nh=(\vp_{\mathrm{min}},\vp_{\mathrm{max}})$. Identifying 
$\,\bbRP^1\smallsetminus\bbI^\circ$ with an interval in $\,\bbR$, we may then 
treat $\,\yj\,$ as real-val\-ued invariant. However, such an adjustment is not 
possible in general, since $\,\yj:M\to\bbRP^1$ is {\it surjective\/} for some 
nonconstant Kil\-ling potentials $\,\vp\,$ with geodesic gradients on 
(noncompact) K\"ah\-ler surfaces $\,(M,g)$. An example arises when one 
modifies the construction in Section~\ref{fe}, as described in the second 
paragraph of Remark~\ref{bspnt}. Specifically, let $\,\sd=\bbC$, and so 
$\,\sd=\,U_+\nh\cup U_-$, where the open set $\,\,U_\pm$ is defined by the 
condition $\,\pm\hs\mathrm{Re}\,z<1\,$ imposed on $\,z\in\bbC$. We choose 
$\,\yj:\bbC\to\bbRP^1$ to be a surjective mapping such that $\,\yj=\infty\,$ 
on the closure $\,K\,$ of $\,U_+\nh\cap U_-$, while $\,\yj\,$ restricted to 
$\,\bbC\smallsetminus K\,$ is real-val\-ued and has no critical points, and, 
finally, neither $\,\yj:U_+\nh\to\bbRP^1$ nor 
$\,\yj:U_-\nh\to\bbRP^1$ is surjective. (For instance, $\,\yj\,$ with the 
above properties may be a function of $\,\mathrm{Re}\,z$.) We now select base 
points $\,\vp_*^\pm\in\bbR\smallsetminus\yj(U_\pm)$, any metric $\,h\,$ on 
$\,\sd=\bbC$, and any $\,a\in(0,\infty)$. The $\,2$-form $\,\varOmega\,$ on 
$\,\sd\,$ equal to $\,-\hs a\hs(\vp_*^\pm-\yj)^{-1}\omega^{(h)}$ on 
$\,\,U_\pm$ is well defined, since both expressions yield $\,\varOmega=0\,$ on 
$\,U_+\nh\cap U_-$. Being closed, $\,\varOmega\,$ is exact, and so it the 
curvature form of a Hermitian connection in the trivial complex line bundle 
$\,\mathcal{L}\,$ over $\,\sd$, with the bundle projection still denoted by 
$\,\prj:\mathcal{L}\to\sd$. We now define a metric $\,g\,$ on an open subset 
$\,M^\pm$ of the line bundle $\,\mathcal{L}\nh^\pm\nh=\prj^{-1}(U_\pm)\,$ over 
$\,U_\pm$ as in Remark~\ref{bspnt}, using $\,\vp_*^\pm$ and the same function 
$\,Q\,$ of the variable $\,\vp\,$ in both cases. As the two metrics agree on 
the intersection $\,\prj^{-1}(U_+\nh\cap U_-)$, they together form a metric 
$\,g\,$ on $\,M=M^+\nh\cup M^-\nnh$, thus giving rise to a triple 
$\,(M,g,\vp)\,$ for which $\,\yj:M\to\bbRP^1$ is surjective.
\begin{remark}\label{ltref}For later reference, note that, under the 
hypotheses made in the lines preceding (\ref{not}), if $\,m=2$, the 
$\,\mathcal{V}\,$ component $\,[w,w\hh'\hh]^{\mathcal{V}}\nnh$, relative to 
the decomposition $\,\tm'\nh=\mathcal{H}\oplus\mathcal{V}$, of the Lie bracket 
of any two sections $\,w,w\hh'$ of $\,\mathcal{H}\,$ is given by
\begin{equation}\label{wwv}
Q\hs[w,w\hh'\hh]^{\mathcal{V}}\hs=\,\,-2\hs\si\hs g(Jw,w\hh'\hh)\hs u\hh.
\end{equation}
If, in addition, $\,w,w\hh'$ commute with both $\,v\,$ and $\,u$, then
\begin{equation}\label{dvq}
d_v[\hs\si\hs g(w,w\hh'\hh)/Q\hh]\,=\,d_u[\hs\si\hs g(w,w\hh'\hh)/Q\hh]\,
=\,0\hh.
\end{equation}
Both equalities follow since $\,\si\,$ is the eigenvalue function of 
$\,\nabla\nh v\,$ in $\,\mathcal{H}$, and so
\begin{equation}\label{dvg}
g(\nabla_{\!w}v,w\hh'\hh)=\si\hs g(w,w\hh'\hh)\hh,\hskip13pt
g(\nabla_{\!w}u,w\hh'\hh)=g(J\nabla_{\!w}v,w\hh'\hh)=\si\hs g(Jw,w\hh'\hh)
\end{equation} 
for sections $\,w,w\hh'$ of $\,\mathcal{H}$. Hence, as 
$\,g(v,\nabla_{\!w}w\hh'\hh)=-\hh g(\nabla_{\!w}v,w\hh'\hh)\,$ and 
$\,g(u,\nabla_{\!w}w\hh'\hh)=-\hh g(\nabla_{\!w}u,w\hh'\hh)$, we have 
$\,g(v,\nabla_{\!w}w\hh'\hh)=-\hs\si\hs g(w,w\hh'\hh)\,$ and 
$\,g(u,\nabla_{\!w}w\hh'\hh)=-\hs\si\hs g(Jw,w\hh'\hh)$. 
Skew-sym\-me\-triz\-ed in $\,w,w\hh'\nnh$, this gives (\ref{wwv}) due 
to symmetry of $\,g(w,w\hh'\hh)\,$ and skew-sym\-me\-try of 
$\,g(Jw,w\hh'\hh)\,$ in $\,w,w\hh'\nnh$. For the same reasons of 
(skew)-sym\-me\-try, assuming that $\,w,w\hh'$ commute with $\,v,u$, we obtain 
$\,d_v[\hh g(w,w\hh'\hh)]=2\hs\si\hs g(w,w\hh'\hh)\,$ and 
$\,d_u[\hh g(w,w\hh'\hh)]=0\,$ in view of (\ref{dvg}) and the Leib\-niz rule. 
Now (\ref{tps}.c) and (\ref{dvp}) yield (\ref{dvq}).
\end{remark}
\begin{remark}\label{evnbu}Let $\,\vp\,$ be a nonconstant Kil\-ling potential 
with a geodesic gradient on a K\"ah\-ler surface $\,(M,g)$. If $\,\sd\,$ is a 
critical manifold of $\,\vp$, cf.\ Example~\ref{kpamb}, and $\,y\in\sd$, then 
the covariant derivative $\,[\nabla\nh u]_y:T_yM\to T_yM\,$ of the Kil\-ling 
field $\,u=J(\navp)\,$ at $\,y\,$ has the kernel $\,T_y\sd$, and acts as the 
operator $\,aJ_y$ in the normal space $\,N_y\sd$, where $\,a\,$ is the unique 
nonzero eigen\-value of $\,\nabla d\vp\,$ at $\,y$, cf.\ 
Lemma~\ref{mrbgg}(i)\hh--\hh(ii).

In fact, $\,\nabla d\vp\,$ corresponds via $\,g\,$ to $\,\nabla\nh v$, for 
$\,v=\navp$, while $\,\nabla\nh u=J\circ\nabla\nh v$.
\end{remark}

\section{Morse\hs-Bott Functions on Compact Manifolds}\label{cm}
We now consider \hbox{Morse\hs-}\hskip0ptBott functions $\,\vp\,$ with 
geodesic gradients such that
\begin{equation}\label{cdo}
\mathrm{all\ critical\ manifolds\ of\ }\,\,\vp\hskip1.7pt\,\mathrm{\ are\ of\ 
co\-di\-men\-sions\ greater\ than\ }\,1\hh.
\end{equation}
In view of Example~\ref{kpamb}, given a function $\,\vp\,$ on a K\"ah\-ler 
manifold $\,(M,g)$,
\begin{equation}\label{cdk}
\mathrm{condition\ (\ref{cdo})\ holds\ whenever\ }\,\,\vp\,\,\mathrm{\ is\ 
a\ nonconstant\ Kil\-ling\ potential.}
\end{equation}
\begin{lemma}\label{smdhs}If the Hess\-i\-an of a 
\hbox{Morse\hs-}\hskip0ptBott function\/ $\,\vp\,$ on a compact manifold is 
sem\-i\-def\-i\-nite at every critical point, and all critical manifolds 
are of co\-di\-men\-sions\/ $\,k>1$, then
\begin{enumerate}
  \def\theenumi{{\rm\alph{enumi}}}
\item $\vp\,$ has exactly two critical manifolds, which are its maximum and 
minimum levels,
\item all levels of\/ $\,\vp\,$ are connected.
\end{enumerate}
\end{lemma}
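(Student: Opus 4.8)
The plan is to prove Lemma~\ref{smdhs} using standard Morse-Bott theory adapted to the semidefinite-Hessian hypothesis, which forces every critical manifold to behave like a local extremum. The key structural fact I would exploit is that when the Hessian is semidefinite at a critical point and the critical manifold has codimension $\,k>1$, the nonzero eigenvalues of the Hessian in the normal directions all share the same sign; this is exactly what Lemma~\ref{mrbgg}(i)--(ii) delivers in the geodesic-gradient setting, where there is a \emph{single} nonzero eigenvalue $\,a\,$ and the normal space is its full eigenspace. Hence along a gradient trajectory $\,\vp\,$ is strictly monotone when leaving a critical manifold, and every critical manifold is either a strict local maximum or a strict local minimum of $\,\vp$.

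For part (a), the first step is to observe that, by compactness, $\,\vp\,$ attains its global maximum and minimum, and each of these levels is a critical manifold (it lies in the critical set, and by the definition of a Morse-Bott function it is a submanifold). I would then argue that there can be \emph{no other} critical manifolds. Suppose $\,\sd\,$ is a critical manifold that is neither the maximum nor the minimum level. Since the Hessian is semidefinite and the codimension exceeds one, the normal eigenvalue $\,a\,$ has a definite sign: if $\,a>0$, then $\,\vp\,$ increases in all normal directions from $\,\sd$, making $\,\sd\,$ a strict local minimum, and if $\,a<0$ it is a strict local maximum. The crucial point is that codimension $\,k>1\,$ makes the normal sphere \emph{connected}, so a saddle-type behavior (where $\,\vp\,$ both increases and decreases along normal directions) cannot occur. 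A strict local minimum that is not the global minimum contradicts monotonicity of the gradient flow: following $\,-\navp\,$ downward from points just off $\,\sd\,$ leads to a lower critical value, but $\,\sd\,$ being a strict local minimum means no nearby points have smaller $\,\vp$-value, forcing the flow to terminate at $\,\sd\,$ itself, which is impossible if $\,\sd\,$ is not already the global minimum. The same argument with signs reversed handles the strict local maximum case. Thus the only critical manifolds are the global maximum and minimum levels.

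For part (b), connectedness of the levels, the plan is to use the gradient flow to deformation-retract. Each regular level $\,\vp^{-1}(c)$, for $\,c\,$ strictly between $\,\vp_{\mathrm{min}}\,$ and $\,\vp_{\mathrm{max}}$, is diffeomorphic to every other regular level via the normalized gradient flow, since there are no critical values in the open interval (by part (a)). So all regular levels are mutually diffeomorphic; it then suffices to show one of them is connected. I would argue that a regular level just above the minimum is the boundary of a tubular neighborhood of the minimum critical manifold $\,\sd_{\mathrm{min}}$; because the normal sphere bundle over a connected base with connected fibers (the fibers being spheres $\,S^{k-1}\,$ with $\,k-1\ge1$) is connected, this level is connected, and hence so is every regular level. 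The singular levels (the two critical manifolds themselves) are connected because they arise as the boundary limits of the connected regular levels under the flow, or alternatively because each is the zero set of the single nonconstant Killing field's potential on a connected tubular picture.

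\emph{The main obstacle} I anticipate is making the ``no saddle'' argument fully rigorous: I must rule out a critical manifold at an intermediate level cleanly, and the honest way to do this is to invoke the semidefiniteness hypothesis together with $\,k>1\,$ to conclude the normal eigenvalues are of one sign, so that $\,\sd\,$ is a strict local extremum, and then derive a contradiction with it not being the global extremum. The topological input—that codimension greater than one keeps normal spheres connected, which is what prevents level sets from splitting as one crosses a critical value—is where I would be most careful, since it is precisely the failure of this in codimension one (where $\,S^0\,$ is disconnected) that allows splitting and makes the hypothesis $\,k>1\,$ essential for both (a) and (b).
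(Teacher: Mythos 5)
The paper itself disposes of this lemma by citing \cite[Proposition 11.4]{derdzinski-maschler-06}, so you are supplying an argument the paper does not spell out; unfortunately your proof of (a) has a genuine gap at its central step. You correctly reduce to showing that a critical manifold $\,\sd\,$ which is a strict local minimum (the Hessian being semidefinite and of rank equal to the codimension, it is definite on the normal space, so $\,\sd\,$ is indeed a strict local extremum) cannot fail to be the global minimum level. But the contradiction you offer --- that the downward flow of $\,-\navp\,$ from points just off $\,\sd\,$ ``leads to a lower critical value'' yet must terminate at $\,\sd$, ``which is impossible if $\,\sd\,$ is not already the global minimum'' --- is not a contradiction at all. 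The downward trajectory from a point near a strict local minimum simply converges to that local minimum; nothing forces it to reach the global minimum value, as any double-well picture shows. Connectedness of the normal spheres, which you invoke, only rules out saddle behavior (already excluded by semidefiniteness); it does not by itself rule out two distinct local minima. Since your proof of (b) then quotes (a) to assert there are no interior critical values, the gap propagates.

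What is actually needed, and where $\,k>1\,$ enters globally rather than fibrewise, is an argument of the following type: the critical set is a finite union of compact submanifolds of codimension at least $\,2$, so $\,M\smallsetminus\mathrm{Crit}\hs\vp\,$ is connected; every downward trajectory of a (pseudo-)gradient converges to a critical manifold, which must be a strict local minimum (it cannot be a strict local maximum, since $\,\vp\,$ decreases along the trajectory to the limiting critical value while exceeding it); each strict local minimum attracts an \emph{open} basin, and these basins are disjoint and cover $\,M\smallsetminus\mathrm{Crit}\hs\vp$; connectedness then forces a unique local-minimum critical manifold, and symmetrically a unique local maximum, after which one identifies these with $\,\vp^{-1}(\vp_{\mathrm{min}})\,$ and $\,\vp^{-1}(\vp_{\mathrm{max}})\,$ (this also yields the connectedness of the extremal levels, which you implicitly assume when you call them ``critical manifolds''). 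With (a) so established, your argument for (b) --- regular levels are mutually diffeomorphic via the normalized gradient flow and the level just above the minimum is the normal sphere bundle, connected because the fibres $\,S^{k-1}$ with $\,k-1\ge1\,$ and the base are connected --- is sound.
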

\begin{proof}See \cite[Proposition 11.4]{derdzinski-maschler-06}.
\end{proof}
\begin{theorem}\label{qfcot}Suppose that\/ $\,\vp\,$ is a 
\hbox{Morse\hs-}\hskip0ptBott function with a geodesic gradient on a 
compact Riemannian manifold\/ $\,(M,g)\,$ and all critical manifolds of\/ 
$\,\vp\,$ have co\-di\-men\-sions greater than $\,1$. Let us also set\/ 
$\,\bbI=[\hh\vp_{\mathrm{min}},\vp_{\mathrm{max}}]\,$ and\/ 
$\,\bbI^\circ\nh=(\vp_{\mathrm{min}},\vp_{\mathrm{max}})$. Then
\begin{enumerate}
  \def\theenumi{{\rm\roman{enumi}}}
\item $Q=g(\navp,\navp)\,$ is a\/ $\,C^\infty$ function 
of\/ $\,\vp$, in the sense of Remark\/~{\rm\ref{cifot}},
\item for\/ $\,y,a\,$ as in Lemma\/~{\rm\ref{mrbgg}(i)}, and\/ 
$\,\vp\mapsto Q\,$ as in\/ {\rm(i)}, $\,dQ\hh/\nh d\vp\,$ at\/ $\,y\,$ 
equals\/ $\,2a$,
\item for the function\/ $\,\vp\mapsto Q\,$ in\/ {\rm(i)}, the integral\/ 
$\,\ds\,$ of\/ $\,Q^{-1/2}\hs$ over\/ $\,\bbI\,$ is finite,
\item $\ds\,$ in\/ {\rm(iii)} is the distance between the minimum and maximum 
levels of\/ $\,\vp$,
\item the assignment\/ $\,\vp\mapsto s$, characterized by\/ 
$\,ds\hh/\nh d\vp=Q^{-1/2}\hs$ and\/ $\,s=0\,$ at\/ 
$\,\vp=\vp_{\mathrm{min}}\hs$, is a homeo\-mor\-phism\/ 
$\,\bbI\to[\hs0,\ds]\,$ which maps\/ $\,\bbI^\circ$ 
dif\-feo\-mor\-phic\-al\-ly onto\/ $\,(0,\ds)$,
\item $s\,$ in\/ {\rm(v)} equals the distance from the minimum level of\/ 
$\,\vp$, when treated, due to its dependence on $\,\vp$, as a function\/ 
$\,s:M\to\bbR$.
\end{enumerate}
\end{theorem}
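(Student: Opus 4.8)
The plan is to establish the six assertions in a logical cascade, beginning from the local structure near critical points and building toward the global statements about distance. The starting point is Lemma~\ref{mrbgg}, which gives, at each critical manifold, exactly one nonzero Hessian eigenvalue $a$ whose eigenspace is the normal space, together with the crucial fact (iv) that $v=\navp$ is tangent to normal geodesics emanating from the critical manifolds. The semidefiniteness of the Hessian at every critical point is automatic here, since there is only one nonzero eigenvalue; hence Lemma~\ref{smdhs} applies, giving us exactly two critical manifolds---the minimum level $\vp^{-1}(\vp_{\mathrm{min}})$ and the maximum level $\vp^{-1}(\vp_{\mathrm{max}})$---and connectedness of all levels. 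This is the structural backbone I would lay down first.

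To prove (i), that $Q$ is a $C^\infty$ function of $\vp$, I would argue that on $M'$ the geodesic-gradient condition (\ref{qeg}) already makes $Q$ locally a function of $\vp$; since all levels of $\vp$ are connected by Lemma~\ref{smdhs}(b), this local dependence globalizes to a genuine function $\bbI^\circ\to\bbR$. The only delicate point is smoothness up to the endpoints $\vp_{\mathrm{min}},\vp_{\mathrm{max}}$. Here I would use Lemma~\ref{mrbgg}(iv) to reparametrize a fixed normal geodesic from the minimum manifold by arclength $t$, and analyze $\vp$ along it. By Remark~\ref{dtzez}, $\dot\vp(0)=0$ and $\ddot\vp(0)=a$, so $\vp$ behaves like $\vp_{\mathrm{min}}+\tfrac{a}{2}t^2+\dots$ near $t=0$, while $Q=g(v,v)=\dot\vp(t)^2$ along this unit-speed geodesic (since $v$ is tangent to it). Expressing $Q$ as a function of $\vp$ via this quadratic change of variable, I would check that the resulting function extends smoothly to the endpoint---the even powers of $t$ in $\dot\vp^2$ match the half-integer powers absorbed by $\vp-\vp_{\mathrm{min}}\sim t^2$. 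This simultaneously yields (ii): since $Q=\dot\vp^2$ and $\vp-\vp_{\mathrm{min}}\sim\tfrac{a}{2}t^2$, differentiating gives $dQ/d\vp\to 2a$ at $y$.

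For (iii)--(vi), the key tool is Remark~\ref{lngth}, which computes the length of any nonconstant integral curve of $v$ as $\int Q^{-1/2}\,d\vp$. The integrand is singular at the endpoints where $Q$ vanishes, but by (ii) we have $Q\sim 2a(\vp-\vp_{\mathrm{min}})$ near $\vp_{\mathrm{min}}$, so $Q^{-1/2}\sim(2a(\vp-\vp_{\mathrm{min}}))^{-1/2}$, an integrable singularity; symmetrically at $\vp_{\mathrm{max}}$. This gives finiteness of $\ds$ in (iii). For (iv) and (vi), I would take a maximal integral curve of $v$; by Lemma~\ref{mrbgg}(iv) its underlying set is a single normal geodesic running from the minimum manifold to the maximum manifold, realized as a length-$\ds$ curve joining the two critical manifolds, which shows $\ds$ is at least the distance between them. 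For the reverse inequality, any path between the two manifolds must have $\vp$ increase from $\vp_{\mathrm{min}}$ to $\vp_{\mathrm{max}}$, and the Cauchy--Schwarz estimate $|\dot\vp|=|d_{\dot x}\vp|\le|\navp|\,|\dot x|=Q^{1/2}|\dot x|$ forces its length to be at least $\int Q^{-1/2}\,d\vp=\ds$; thus $\ds$ equals the distance, proving (iv), and the distance from the minimum level to a point $x$ equals the arclength $s(x)=\int_{\vp_{\mathrm{min}}}^{\vp(x)}Q^{-1/2}\,d\vp$ along the minimizing normal geodesic, which is (vi). Assertion (v) is then the routine observation that $s$ defined by $ds/d\vp=Q^{-1/2}$ with $s=0$ at $\vp_{\mathrm{min}}$ is a homeomorphism $\bbI\to[\hs0,\ds]$, smooth on the interior since $Q>0$ there.

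\emph{The main obstacle} I anticipate is the smoothness of $Q$ as a function of $\vp$ all the way to the endpoints in part (i). The geodesic-gradient condition cleanly gives local dependence on the open set $M'$, but verifying that the resulting one-variable function extends $C^\infty$ across the vanishing of $Q$---rather than merely continuously, or with a square-root singularity---requires the careful Taylor-expansion argument along normal geodesics sketched above, using the Morse--Bott nondegeneracy in the normal directions. The even-order vanishing encoded in Remark~\ref{dtzez} is exactly what rescues smoothness, but making this rigorous is the one genuinely technical step; everything downstream follows by integration and elementary metric-geometry comparison.
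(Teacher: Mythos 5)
Your overall strategy is sound and your treatment of (iii)--(vi) is essentially workable (your Cauchy--Schwarz lower bound for the distance in (iv) is a mild variant of the paper's argument, which instead shows that a shortest geodesic joining the two critical levels is itself a reparametrized integral curve of $\,\navp\,$ and computes its length by Remark~\ref{lngth}). The genuine gap is in your proof of (i), which you rightly single out as the crux but then resolve incorrectly. Knowing only $\,\dot\vp(0)=0\ne\ddot\vp(0)=a\,$ from Remark~\ref{dtzez} does \emph{not} make $\,Q\,$ a $\,C^\infty$ function of $\,\vp\,$ at the endpoint: if along a unit-speed normal geodesic one had, say, $\,\vp(x(t))-\vp_{\mathrm{min}}=t^2+t^3$, then $\,Q=\dot\vp^2=4\hh t^2+12\hh t^3+9\hh t^4$ would contain, after the substitution $\,t\sim(\vp-\vp_{\mathrm{min}})^{1/2}$, a term proportional to $\,(\vp-\vp_{\mathrm{min}})^{3/2}$, so $\,Q\,$ would be only $\,C^1$ in $\,\vp$. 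What rescues smoothness is not the nondegeneracy of the Hessian but the \emph{evenness} of $\,t\mapsto\vp(x(t))\,$ (and hence of $\,t\mapsto Q(x(t))$), combined with the classical fact that a smooth even function of $\,t\,$ is a smooth function of $\,t^2$; your sketch never establishes evenness.

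This forces a reordering of your cascade. In the paper, evenness is obtained from assertion (vi): since $\,|t|=\mathrm{dist}\hh(\sd,x(t))\,$ for small $\,|t|\,$ (Remark~\ref{gauss}) and $\,\vp\,$ is a homeomorphic function of the distance $\,s\,$ from $\,\sd\,$ by (v)--(vi), the value $\,\vp(x(t))\,$ depends only on $\,|t|$. Hence (iii)--(vi) must be proved \emph{before} (i), whereas you derive (iii) from (ii) from (i). The circle can be broken because (iii)--(vi) do not actually require (i): finiteness of $\,\ds\,$ follows from identifying $\,\ds\,$ with the length of a shortest geodesic segment between the two critical levels---finite since $\,M\,$ is compact---which Lemma~\ref{mrbgg}(iv) shows to be a reparametrized integral curve of $\,\navp$, so that Remark~\ref{lngth} applies. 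If you either adopt this order, or supply an independent local proof of evenness (for instance via uniqueness for the separated equation $\,\dot\vp=\chi(\vp)^{1/2}$ satisfied on both sides of $\,t=0$), and then invoke the even-function lemma, your argument for (i)--(ii) goes through.
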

\begin{proof}Let $\,\sd\,$ and $\,\sd^*$ be the minimum and maximum levels 
of $\,\vp$.

By (\ref{qeg}), $\,Q\,$ restricted to the open set $\,M'$ where 
$\,d\vp\ne0\,$ is, locally, a $\,C^\infty$ function of $\,\vp$. The word 
`locally' can be dropped in view of Lemma~\ref{smdhs}(b). The resulting 
$\,C^\infty$ function $\,\bbI^\circ\nh\ni\vp\mapsto Q\,$ has a continuous 
extension to $\,\bbI$, equal to $\,0\,$ at the endpoints.

Next, let us fix a parametrization $\,[\hs0,\delta\hh]\ni t\mapsto x(t)\,$ 
of a shortest geodesic segment $\,\varGamma\,$ joining $\,\sd\,$ to 
$\,\sd^*\nnh$, with $\,x(0)\in\sd$. By (\ref{dtz}), the infimum $\,t'$ of 
those $\,t\in(0,\delta)\,$ for which $\,\dot\vp(t)=0\,$ lies 
in $\,(0,\delta\hh]$. As $\,v=\navp\,$ is tangent to $\,\varGamma\,$ 
(Lemma~\ref{mrbgg}(iv)), and $\,\dot\vp=g(v,\dot x)\,$ vanishes at 
$\,t=t'\nnh$, at $\,x(t'\hh)\,$ we must also have $\,v=0$, and hence 
$\,\vp=\vp_{\mathrm{max}}$. (The fact that $\,\vp(x(t))\,$ is an increasing 
function of $\,t\in(0,t'\hh)\,$ excludes the only other possibility left 
open by Lemma~\ref{smdhs}(a), namely, $\,\vp=\vp_{\mathrm{min}}$.) The 
dis\-tance-min\-i\-miz\-ing property of $\,\varGamma\hs$ now implies 
that $\,t'\nh=\delta$, and so $\,v_{x(t)}\nh\ne0\,$ whenever 
$\,t\in(0,\delta)$, that is, the open-in\-ter\-val restriction 
$\,(0,\delta)\ni t\mapsto x(t)\,$ is a re\-pa\-ram\-e\-trized integral curve 
of the gradient $\,v=\navp$. Thus, $\,\ds\,$ in (iii) is finite, as it equals 
the length of $\,\varGamma\,$ (see Remark~\ref{lngth}), which proves (iii) and 
(iv). Assertion (v) is in turn obvious from (iii). Finally, let us fix 
$\,x\in M'\nnh$. According to Remark~\ref{lngth} and (iii), the length of 
the maximal integral curve of $\,v\,$ through $\,x\,$ is finite, and so its 
underlying \hbox{one\hh-}\hskip.7ptdi\-men\-sion\-al manifold $\,C\,$ has 
limit endpoints $\,y_{\mathrm{min}}$ and $\,y_{\mathrm{max}}$ 
(Remark~\ref{compl}), at which $\,\vp=\vp_{\mathrm{min}}$ and 
$\,\vp=\vp_{\mathrm{max}}$ due to maximality of $\,C\,$ and 
Lemma~\ref{smdhs}(a). By Remark~\ref{lngth}, the length of $\,C\,$ is 
$\,\ds$. Hence, in view of (iv), 
$\,\varGamma\nh=C\cup\{y_{\mathrm{min}},y_{\mathrm{max}}\}\,$ is a 
dis\-tance-min\-i\-miz\-ing geodesic segment. Consequently, the same is true 
of the sub\-seg\-ment $\,\varGamma\hh'$ of $\,\varGamma\,$ joining 
$\,y_{\mathrm{min}}$ to $\,x$, which is also the shortest geodesic segment 
joining $\,\sd\,$ to $\,x$. The distance between $\,\sd\,$ and $\,x\,$ is 
therefore given by the length formula in Remark~\ref{lngth}, applied to 
$\,\varGamma\hh'\nnh$, and (vi) follows.

For $\,(\nh-\ve,\ve)\ni t\mapsto x(t)\,$ as in Remark~\ref{dtzez}, with 
$\,\ve\in(0,\infty)\,$ chosen sufficiently small, $\,|t|\,$ equals 
$\,\mathrm{dist}\hh(\sd,x(t))\,$ (or, $\,\mathrm{dist}\hh(\sd^*\nnh,x(t))$), 
cf.\ Remark~\ref{gauss} and Lemma~\ref{smdhs}(a). Thus, by (vi), $\,|t|\,$ is 
the value of $\,s:M\to\bbR\,$ or, respectively, $\,\ds-s:M\to\bbR$, at 
$\,x(t)$. (Note that replacing $\,\vp\,$ by $\,\vp_*\nh-\vp$, where $\,\vp_*$ 
is the midpoint of $\,\bbI$, causes $\,\vp_{\mathrm{min}}$ to be switched with 
$\,\vp_{\mathrm{max}}$, and $\,s\,$ with $\,\ds-s$.) The homeo\-mor\-phic 
correspondence between $\,s\,$ and $\,\vp\,$ in (v) now implies that 
$\,\vp(x(t))\,$ is an even $\,C^\infty$ function of $\,t$, and, due to the 
al\-read\-y\nh-es\-tab\-lish\-ed dependence of $\,Q\,$ on $\,\vp$, the same 
is true of $\,Q(x(t))$. Evenness of both functions and the relation 
$\,\dot\vp(0)=0\ne\ddot\vp(0)\,$ (cf.\ (\ref{dtz})) are well-known to imply 
that $\,Q\,$ restricted to some neighborhood of $\,\vp_{\mathrm{min}}$ (or, 
$\,\vp_{\mathrm{max}}$) in $\,\bbI\,$ is a $\,C^\infty$ function of $\,\vp$. 
See, for instance, \cite[the last nine lines in \S9]{derdzinski-maschler-06}. 
Thus, the extension of $\,Q\,$ from $\,\bbI^\circ$ to $\,\bbI\,$ is of class 
$\,C^\infty\nnh$, which proves (i).

Finally, $\,dQ\hh/\nh d\vp=2\ta\,$ on $\,\bbI^\circ$, and, consequently, on 
$\,\bbI$, since $\,dQ=2\ta\,d\vp\,$ by (\ref{tnd}) and (\ref{nvv}). Again, let 
us choose a geodesic $\,t\mapsto x(t)\,$ as in Remark~\ref{dtzez}. Then 
$\,v\,$ is tangent to it (Lemma~\ref{mrbgg}(iv)) and so, by (\ref{nvv}), 
$\,\dot x\,$ is, at every $\,t$, an eigenvector of $\,\nabla d\vp\,$ (that is, 
of $\,\nabla\nh v$) for the eigenvalue 
$\,\ta=[\nabla d\vp](\dot x,\dot x)=\ddot\vp$. Now (\ref{dtz}) implies (ii).
\end{proof}
The next lemma uses the notations of Remark~\ref{gauss} and $\,\ds\,$ 
defined in Theorem~\ref{qfcot}.
\begin{lemma}\label{mrsbt}Let\/ $\,\sd\,$ and\/ $\,\sd^*$ be the minimum and 
maximum levels of a nonconstant \hbox{Morse\hs-}\hskip0ptBott function\/ 
$\,\vp\,$ with a geodesic gradient and\/ {\rm(\ref{cdo})} on a compact 
Riemannian manifold $\,(M,g)$. Then\/ $\,\mathrm{Exp}^\perp$ maps\/ 
$\,N^\ds\nh\sd\,$ dif\-feo\-mor\-phic\-al\-ly onto\/ $\,B_\ds\nh(\nh\sd)$, 
and\/ $\,B_\ds\nh(\nh\sd)=M\smallsetminus\sd^*\nnh$.
\end{lemma}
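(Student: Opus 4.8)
The plan is to prove the two claims of Lemma~\ref{mrsbt} together, relying on the normal-exponential picture already set up in Section~\ref{cm}. The key facts I would invoke are: first, by Theorem~\ref{qfcot}(iv) the number $\,\ds\,$ equals the distance between the minimum level $\,\sd\,$ and the maximum level $\,\sd^*\nnh$; second, by Theorem~\ref{qfcot}(vi) the function $\,s:M\to\bbR$, equal to the distance from $\,\sd$, agrees with the arclength parameter along the gradient flow; and third, by Lemma~\ref{mrbgg}(iv) together with Theorem~\ref{qfcot} the integral curves of $\,v=\navp\,$ through points of $\,M'\nnh$ are exactly the open distance-minimizing normal geodesic segments joining $\,\sd\,$ to $\,\sd^*\nnh$, each of length $\,\ds$. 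So geometrically the assertion simply says that $\,M\,$ is swept out, without overlap, by the length-$\ds$ normal geodesics emanating from $\,\sd$, and that the only point missed by the open radius-$\ds$ ball around $\,\sd\,$ is the far endpoint lying on $\,\sd^*\nnh$.

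First I would show that $\,\mathrm{Exp}^\perp$ restricted to $\,N^\ds\nh\sd\,$ is injective and is a local diffeomorphism, hence a diffeomorphism onto its image. Local diffeomorphism on the open punctured bundle follows from Remark~\ref{gauss} applied on short scales, combined with the fact (established in the proof of Theorem~\ref{qfcot}) that a normal geodesic from $\,\sd\,$ first meets a critical point only at parameter value $\,\ds\,$ and does so on $\,\sd^*\nnh$; in particular no normal geodesic of length $\,<\ds\,$ returns to $\,\sd\,$ or hits $\,\sd^*\nnh$, so $\,\mathrm{Exp}^\perp$ has maximal rank throughout $\,N^\ds\nh\sd\,$ and its image lies in $\,M\smallsetminus\sd^*\nnh$. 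Injectivity is the crux: two distinct normal vectors of length $\,<\ds\,$ mapping to the same point $\,x\,$ would give two distinct shortest geodesics from $\,\sd\,$ to $\,x$, contradicting Theorem~\ref{qfcot}(vi), since the distance from $\,\sd\,$ determines the arclength along the unique gradient integral curve through $\,x\,$ by the distance-minimizing conclusion of that proof.

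Next I would identify the image. By Theorem~\ref{qfcot}(vi) every point $\,x\in M'\,$ has $\,\mathrm{dist}(\sd,x)=s(x)\in(0,\ds)$ and lies on the unique shortest normal geodesic from $\,\sd$, so $\,x=\mathrm{Exp}^\perp(\xi)\,$ for a unique $\,\xi\in N^\ds\nh\sd\smallsetminus\sd\,$; points of $\,\sd\,$ itself correspond to the zero section. This shows $\,\mathrm{Exp}^\perp\,$ maps $\,N^\ds\nh\sd\,$ onto $\,M'\cup\sd=M\smallsetminus\sd^*\nnh$ (using Lemma~\ref{smdhs}(a), which guarantees that the only critical manifolds are $\,\sd\,$ and $\,\sd^*$, so $\,M\smallsetminus M'=\sd\cup\sd^*$). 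Combined with $\,\mathrm{dist}(\sd,x)<\ds\,$ on this set, the image is precisely the open ball $\,B_\ds\nh(\nh\sd)$, giving simultaneously the diffeomorphism claim and the equality $\,B_\ds\nh(\nh\sd)=M\smallsetminus\sd^*\nnh$.

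The main obstacle I anticipate is the injectivity and surjectivity bookkeeping at the boundary scale $\,t\to\ds^-\nnh$: one must rule out a normal geodesic reaching a critical point \emph{before} parameter $\,\ds\,$ and must rule out focal points along the open segment, both of which would break the diffeomorphism. The cleanest way around this is to lean entirely on the gradient-flow description from the proof of Theorem~\ref{qfcot}: because $\,v\,$ is tangent to each normal geodesic and $\,\vp\,$ is strictly increasing along it until the unique critical value $\,\vp_{\mathrm{max}}$ is reached at length exactly $\,\ds$, there are no interior critical points, and the bijective homeomorphism $\,\vp\leftrightarrow s\,$ of Theorem~\ref{qfcot}(v) forces the normal-exponential image and the distance function to match up exactly, so no separate focal-point analysis is needed.
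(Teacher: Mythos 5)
There is a genuine gap, and it sits exactly at the point you yourself flag as the ``main obstacle'' and then wave away: the claim that $\,\mathrm{Exp}^\perp$ has maximal rank throughout $\,N^\ds\nh\sd$. The facts you lean on --- that $\,v=\navp\,$ is tangent to every normal geodesic from $\,\sd$, that $\,\vp\,$ increases strictly along it until $\,\sd^*$ is reached at length exactly $\,\ds$, and the homeomorphism $\,\vp\leftrightarrow s\,$ of Theorem~\ref{qfcot}(v)--(vi) --- give you injectivity and surjectivity, hence at best a continuous bijection of $\,N^\ds\nh\sd\,$ onto $\,B_\ds\nh(\nh\sd)\,$ (a homeomorphism, by invariance of domain). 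They say nothing about nondegeneracy of $\,d\hs\mathrm{Exp}^\perp$ away from the zero section, i.e.\ about focal points of $\,\sd$; the absence of critical points of $\,\vp\,$ along the open segments is simply not the same thing. Since a smooth bijection need not be a diffeomorphism ($\,x\mapsto x^3$ on $\,\bbR\,$), your concluding sentence ``no separate focal-point analysis is needed'' is unjustified as written. To close the gap you could note that every normal geodesic from $\,\sd\,$ of length $\,\ds\,$ is distance-minimizing (this is established in the proof of Theorem~\ref{qfcot}(vi)) and invoke the standard fact that a minimizing normal geodesic contains no focal points of $\,\sd\,$ in its interior, so that $\,d\hs\mathrm{Exp}^\perp$ is nonsingular on all of $\,N^\ds\nh\sd$.

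The paper avoids this issue entirely by a different device: it writes down an explicit smooth two-sided inverse $\,H:M'\to N^\ds\nh\sd\smallsetminus\sd$, sending $\,x\,$ to $\,-\hh\dot x(1)\,$ for the geodesic $\,[\hs0,1\hh]\ni t\mapsto x(t)\,$ of length $\,\mathrm{dist}\hh(\sd,x)\,$ with $\,x(0)=x\,$ and $\,\dot x(0)\,$ a negative multiple of $\,v_x$. Smoothness of $\,H\,$ is automatic because $\,\mathrm{dist}\hh(\sd,\cdot)=s\,$ is a smooth function of $\,\vp\,$ on $\,M'$ and the geodesic flow is smooth, so no rank condition on $\,d\hs\mathrm{Exp}^\perp$ ever has to be checked; near the zero section Remark~\ref{gauss} finishes the job. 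Your identification of the image $\,B_\ds\nh(\nh\sd)=M\smallsetminus\sd^*$ and your injectivity argument (uniqueness of the gradient trajectory through a point of $\,M'$, forcing the shortest normal geodesic from $\,\sd\,$ to $\,x\,$ to be unique) are essentially the paper's and are sound; only the passage from bijection to diffeomorphism needs repair.
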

\begin{proof}That $\,B_\ds\nh(\nh\sd)=M\smallsetminus\sd^*$ is obvious from 
assertions (v) and (vi) in Theorem~\ref{qfcot}.

Let $\,M'\nh\subset M\,$ be the open set given by $\,v\ne0$, where 
$\,v=\navp$. If $\,x\in M'\nnh$, the geodesic segment 
$\,[\hs0,1\hh]\ni t\mapsto x(t)\,$ of length $\,\mathrm{dist}\hh(\sd,x)$, such 
that $\,x(0)=x\,$ and $\,\dot x(0)$ is a negative multiple of $\,v_x$, is also 
a shortest segment connecting $\,x\,$ to $\,\sd$. In fact, choosing a shortest 
segment $\,\varGamma\,$ connecting $\,x\,$ to $\,\sd$, we see that it is 
normal to $\,\sd$, and so $\,v\,$ is tangent to it (Lemma~\ref{mrbgg}(iv)); as 
the dif\-feo\-mor\-phism $\,\bbI^\circ\nh\to(0,\ds)\,$ in 
Theorem~\ref{qfcot}(v) is strictly increasing, on 
$\,\varGamma\smallsetminus\sd\,$ the gradient $\,v=\navp\,$ must, by 
Theorem~\ref{qfcot}(vi), point away from $\,\sd\,$ and toward $\,x$. Thus, 
both geodesic segments satisfy the same initial conditions at $\,x$.

Let the mapping $\,H:M'\nh\to\tm\,$ send any $\,x\in M'$ to the vector 
$\,-\hh\dot x(1)\,$ tangent to $\,M\,$ at $\,x(1)$, for $\,t\mapsto x(t)\,$ 
associated with $\,x\,$ as in the last paragraph. Since $\,x(1)\in\sd$ and 
$\,\dot x(1)\,$ is normal to $\,\sd\,$ (see above), $\,H\,$ takes values in 
the subset $\,N^\ds\nh\sd\smallsetminus\sd\,$ of $\,\tm$. Our claim now 
follows, since $\,H\circ\mathrm{Exp}^\perp$ and 
$\,\mathrm{Exp}^\perp\nnh\circ H\,$ are easily seen to be the identity 
mappings of $\,N^\ds\nh\sd\smallsetminus\sd\,$ and 
$\,M'\nh=B_\ds\nh(\nh\sd)\smallsetminus\sd$, while, if 
$\,\ve\in(0,\infty)\,$ is sufficiently small, 
$\,\mathrm{Exp}^\perp:N^{\hh\ve}\nh\sd\to B_\ve\nh(\nh\sd)\,$ is a 
dif\-feo\-mor\-phism (Remark~\ref{gauss}).
\end{proof}

\section{Proof of Theorem~\ref{clssf}, first part}\label{pf}
In this section we construct the required data (\ref{dat}) for any triple 
$\,(M,g,\vp)\,$ satisfying the assumptions of Theorem~\ref{clssf}, and verify 
conditions (i) -- (vi) in Section~\ref{fe}.
\begin{lemma}\label{circl}Let a nonconstant Kil\-ling potential\/ $\,\vp\,$ on 
a complete K\"ahler manifold\/ $\,(M,g)$ have a geodesic gradient. Then
\begin{enumerate}
  \def\theenumi{{\rm\roman{enumi}}}
\item at every critical point of\/ $\,\vp$, the Hess\-i\-an\/ 
$\,\nabla d\vp\,$ has exactly one nonzero eigen\-value, the absolute value of 
which is the same for all critical points,
\item if the set of critical points of\/ $\,\vp\,$ is nonempty, the flow of 
the Kil\-ling vector field\/ $\,u=J(\navp)\,$ is periodic.
\end{enumerate}
\end{lemma}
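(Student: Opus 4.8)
The plan is to read everything off the linearization of the Killing field $\,u=J(\navp)\,$ at a zero. Since $\,\vp\,$ is a nonconstant Killing potential, it is a Morse-Bott function whose critical manifolds are complex submanifolds (Example~\ref{kpamb}), so Lemma~\ref{mrbgg}(i)--(ii) applies on the (not necessarily compact) complete manifold $\,M\,$ and gives, at each critical point $\,y\,$ lying on a critical manifold $\,\sd$, that the Hessian $\,\nabla d\vp=\nabla\nh v\,$ (with $\,v=\navp$) has kernel $\,T_y\sd\,$ and a single nonzero eigenvalue $\,a\,$ on the normal space $\,N_y\sd$. This already yields the first clause of (i); what remains to be shown is that $\,|a|\,$ is independent of $\,y$, together with the periodicity assertion (ii).

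First I would record the $\,1$-jet of $\,u\,$ at $\,y$. Because $\,\nabla J=0\,$ one has $\,\nabla\nh u=J\circ\nabla\nh v$, and since $\,\sd\,$ is a complex submanifold the splitting $\,T_yM=T_y\sd\oplus N_y\sd\,$ is $\,J$-invariant; hence $\,[\nabla\nh u]_y\,$ vanishes on $\,T_y\sd\,$ and acts as $\,a\hh J_y\,$ on $\,N_y\sd$, exactly as in Remark~\ref{evnbu}. On the complexified normal space $\,J_y\,$ has eigenvalues $\,\pm\hs i$, so $\,[\nabla\nh u]_y\,$ has eigenvalues $\,\pm\hs ia\,$ there, and $\,0\,$ on $\,T_y\sd$.

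Next, completeness of $\,(M,g)\,$ makes the Killing field $\,u\,$ complete, so its flow is a one-parameter group $\,\{\phi_t\}\,$ of isometries of the connected manifold $\,M$; each $\,\phi_t\,$ fixes the zero $\,y\,$ of $\,u$, with differential $\,d(\phi_t)_y=\exp(t\hs[\nabla\nh u]_y)$, which is precisely the linearized picture of Remark~\ref{kilxp}. By the eigenvalue computation, $\,d(\phi_t)_y\,$ is the identity on $\,T_y\sd\,$ and a rotation through the angle $\,at\,$ in every complex line of $\,N_y\sd$, so $\,d(\phi_t)_y=\mathrm{id}\,$ exactly when $\,at\in2\pi\hh\mathbb{Z}$. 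Invoking the rigidity fact that an isometry of a connected Riemannian manifold is determined by its value and its differential at one point, and using that $\,\phi_t(y)=y\,$ for every $\,t$, I conclude that $\,\phi_t=\mathrm{id}_M\,$ if and only if $\,t\in(2\pi/|a|)\hs\mathbb{Z}$. In particular $\,\{\phi_t\}\,$ is periodic, with least period $\,2\pi/|a|$, which is (ii).

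The constancy statement in (i) then follows by a global-versus-local comparison. The subgroup $\,G=\{t:\phi_t=\mathrm{id}_M\}\subset\bbR\,$ makes no reference to any choice of critical point, yet the preceding paragraph computed it to be $\,(2\pi/|a_y|)\hh\mathbb{Z}\,$ for the nonzero eigenvalue $\,a_y\,$ at an arbitrary critical point $\,y$; comparing least positive elements of these infinite cyclic subgroups forces $\,2\pi/|a_y|\,$ to be the same for all $\,y$, hence $\,|a_y|\,$ is constant over the critical set. The step that carries the real weight is this passage from a purely local linearized rotation near a single zero to a global conclusion: it rests entirely on the isometry-rigidity principle (an isometry with trivial $\,1$-jet at a point is the identity on connected $\,M$), and once that is granted both the periodicity of the flow and the constancy of $\,|a|\,$ are immediate.
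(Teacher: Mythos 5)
Your proof is correct. Be aware, though, that the paper's own proof of Lemma~\ref{circl} is a one-line citation: it combines Lemma~\ref{mrbgg}(i) (via Example~\ref{kpamb}) with \cite[Corollary~10.3]{derdzinski-maschler-06}, the latter being exactly the periodicity-and-constancy statement you establish. What you have done is supply the standard proof of that cited corollary rather than invoke it: completeness of $\,(M,g)\,$ makes the Killing field $\,u=J(\navp)\,$ complete (its integral curves have constant speed); at a zero $\,y\,$ the differential of its flow is $\,\exp(t\hs[\nabla u]_y)$, which by Lemma~\ref{mrbgg}(i)--(ii) and $\,\nabla u=J\circ\nabla(\navp)\,$ acts as the identity on $\,T_y\sd\,$ and as rotation by the angle $\,a_yt\,$ in each complex line of $\,N_y\sd$; and isometry rigidity on the connected manifold $\,M\,$ then identifies the subgroup $\,\{t:\phi_t=\mathrm{id}_M\}\,$ with $\,(2\pi/|a_y|)\hh\mathbb{Z}\,$ for \emph{every} critical point $\,y$, forcing both periodicity and the constancy of $\,|a_y|$. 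The one hypothesis that makes the period comparison legitimate is $\,a_y\ne0$, which Lemma~\ref{mrbgg}(i) guarantees (a critical manifold cannot have codimension $\,0$), so the cyclic subgroups being compared are genuinely nontrivial. In substance, then, the two arguments coincide; yours has the merit of being self-contained where the paper delegates the key step to a reference.
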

\begin{proof}Obvious from Lemma~\ref{mrbgg}(i) (cf.\ Example~\ref{kpamb}) and 
\cite[Corollary 10.3]{derdzinski-maschler-06}.
\end{proof}
\begin{lemma}\label{crlvl}If\/ $\,\vp\,$ is a nonconstant Kil\-ling potential 
with a geodesic gradient on a compact K\"ahler manifold\/ $\,(M,g)$, then, for 
some\/ $\,a\in(0,\infty)$,
\begin{enumerate}
  \def\theenumi{{\rm\alph{enumi}}}
\item $\vp_{\mathrm{max}}\hs$ and\/ $\,\vp_{\mathrm{min}}\hs$ are the only 
critical values of\/ $\,\vp$,
\item the\/ $\,\vp$-pre\-im\-ages of\/ $\,\vp_{\mathrm{max}}\hs$ and\/ 
$\,\vp_{\mathrm{min}}\hs$ are compact complex submanifolds of\/ $\,M\nh$,
\item $Q=g(\navp,\navp)\,$ is a $\,C^\infty$ function of\/ $\,\vp$, as 
defined in Remark\/~{\rm\ref{cifot}},
\item the values\/ of\/ $\,dQ\hh/\nh d\vp\,$ at\/ 
$\,\vp=\vp_{\mathrm{min}}\hs$ and\/ $\,\vp=\vp_{\mathrm{max}}\hs$ are $\,2a\,$ 
and $\,-2a$.
\end{enumerate}
\end{lemma}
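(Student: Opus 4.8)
The plan is to regard $\,\vp\,$ as a Mor\-se\hs-Bott function and then read off (a)--(d) by assembling the results of Section~\ref{cm} with Lemmas~\ref{mrbgg} and~\ref{circl}. First I would record that, by Example~\ref{kpamb}, the nonconstant Killing potential $\,\vp\,$ is a Mor\-se\hs-Bott function whose critical manifolds are complex submanifolds, compact as closed subsets of the compact $\,M$, and that (\ref{cdk}) makes all their codimensions exceed $\,1$. By Lemma~\ref{mrbgg}(i)--(ii), at every critical point the Hessian $\,\nabla d\vp\,$ has a single nonzero eigenvalue whose eigenspace is the full normal space, and so $\,\nabla d\vp\,$ is semidefinite there. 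Lemma~\ref{smdhs}(a) then forces $\,\vp\,$ to possess exactly two critical manifolds, its minimum and maximum levels $\,\sd=\vp^{-1}(\vp_{\mathrm{min}})\,$ and $\,\sd^*\nh=\vp^{-1}(\vp_{\mathrm{max}})$; this is precisely assertion (a), and, combined with the opening remarks, it also gives (b).

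Assertion (c) requires no new work: the triple $\,(M,g,\vp)\,$ meets the hypotheses of Theorem~\ref{qfcot}, whose part (i) states verbatim that $\,Q\,$ is a $\,C^\infty$ function of $\,\vp\,$ on $\,\bbI=[\hh\vp_{\mathrm{min}},\vp_{\mathrm{max}}]$.

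For (d) I would combine Theorem~\ref{qfcot}(ii) with a sign count and Lemma~\ref{circl}(i). Since $\,\vp\,$ attains its minimum along $\,\sd$, the nonzero Hessian eigenvalue there is positive, while along $\,\sd^*$, where $\,\vp\,$ is maximal, it is negative; by Lemma~\ref{circl}(i) the two have the same absolute value, so they equal $\,a\,$ and $\,-a\,$ for a single $\,a\in(0,\infty)$. Theorem~\ref{qfcot}(ii) identifies $\,dQ\hh/\nh d\vp\,$ at a critical point with twice the local eigenvalue, whence $\,dQ\hh/\nh d\vp=2a\,$ at $\,\vp_{\mathrm{min}}\,$ and $\,dQ\hh/\nh d\vp=-2a\,$ at $\,\vp_{\mathrm{max}}$, which is (d).

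Every ingredient being furnished by the earlier results, the argument is largely a matter of bookkeeping; the one genuinely delicate point is the passage from the pointwise eigenvalue data of Lemma~\ref{mrbgg} to a single global constant $\,a$. Synchronizing the eigenvalues across the two distinct critical manifolds is exactly the content of Lemma~\ref{circl}(i), and this is the step I expect to be the crux: without it, (d) would only claim that $\,dQ\hh/\nh d\vp\,$ takes opposite signs, rather than equal magnitudes, at the two endpoints of $\,\bbI$.
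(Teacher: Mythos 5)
Your proposal is correct and follows essentially the same route as the paper, which likewise derives (a)--(b) from Lemma~\ref{smdhs} together with Example~\ref{kpamb} and (\ref{cdk}), and (c)--(d) from Theorem~\ref{qfcot}(i)--(ii) plus the absolute-value clause of Lemma~\ref{circl}(i). Your explicit verification of the semidefiniteness hypothesis of Lemma~\ref{smdhs} via Lemma~\ref{mrbgg}(i)--(ii), and the sign bookkeeping at the two critical levels, are exactly the details the paper leaves implicit.
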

\begin{proof}Assertions (a) and (b) are immediate consequences of 
Lemma~\ref{smdhs} combined with Example~\ref{kpamb} and (\ref{cdk}); (c) and 
(d) similarly follow from Theorem~\ref{qfcot}(i)\hh--\hh(ii) and the 
ab\-so\-\hbox{lute\hh-}\hskip0ptval\-ue clause in Lemma~\ref{circl}(i).
\end{proof}
\begin{lemma}\label{cdone}Given a nonconstant Kil\-ling potential\/ $\,\vp\,$ 
with a geodesic gradient on a compact K\"ahler surface\/ $\,(M,g)$, let us 
set\/ $\,\bbI=[\hh\vp_{\mathrm{min}},\vp_{\mathrm{max}}]\,$ and\/ 
$\,\bbI^\circ\nh=(\vp_{\mathrm{min}},\vp_{\mathrm{max}})$.
\begin{enumerate}
  \def\theenumi{{\rm\roman{enumi}}}
\item All values of\/ $\,\yj:M\to\bbRP^1\nnh$, defined in 
Lemma\/~{\rm\ref{dvgez}}, lie in\/ $\,\bbRP^1\smallsetminus\bbI^\circ\nnh$.
\item If\/ $\,\vp\,$ is not a special K\"ah\-\hbox{ler\hs-}\hskip0ptRic\-ci 
potential, then
\begin{enumerate}
  \def\theenumi{{\rm\alph{enumi}}}
\item the maximum and minimum levels of\/ $\,\vp\,$ both have complex 
dimension\/ $\,1$,
\item the values of\/ $\,\yj\,$ all lie in\/ $\,\bbRP^1\smallsetminus\bbI$.
\end{enumerate}
\end{enumerate}
\end{lemma}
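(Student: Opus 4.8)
The plan is to work throughout on the dense open set $M'$ where $v=\navp\ne0$, and to exploit the single algebraic identity hidden in the definition of $\yj$. Since $2\si=\Delta\vp-2\ta$, the formula $\yj=\vp-Q/(2\si)$ of Lemma~\ref{dvgez} rearranges, wherever $\yj\ne\infty$ (i.e.\ wherever $\si\ne0$), into
\[
2\si\,(\vp-\yj)\,=\,Q\hh.
\]
By Lemma~\ref{dvgez}(a) the function $\yj$ is constant along each integral curve of $v$, and by Theorem~\ref{qfcot} together with Lemma~\ref{mrsbt} every maximal such curve $C$ runs from the minimum level $\sd$ to the maximum level $\sd^*\nnh$, with $\vp$ strictly increasing and sweeping out all of $\bbI^\circ$. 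I would fix such a $C$, write $\yj_0\in\bbRP^1$ for the constant value of $\yj$ on it, and extract $\yj_0$ from the displayed identity at interior and boundary points of $C$.

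For (i) I would argue by contradiction. If $\yj_0\in\bbI^\circ$, then, since $\vp$ attains every value of $\bbI^\circ$ along $C$, there is an interior point $p\in C\subset M'$ with $\vp(p)=\yj_0$. There $\yj(p)=\yj_0\ne\infty$, so $\si(p)\ne0$, and the identity forces $Q(p)=2\si(p)\,(\vp(p)-\yj_0)=0$; but $Q=g(v,v)>0$ on $M'$. Hence $\yj_0\notin\bbI^\circ$ for every $C$, and since $M'$ is dense in $M$ while $\bbRP^1\smallsetminus\bbI^\circ$ is closed, all values of $\yj$ lie in $\bbRP^1\smallsetminus\bbI^\circ$, which is (i).

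For (ii) the crux is to pin down $\yj_0$ at the ends of $C$ and to match it with the complex dimension of the critical level. Using $dQ/d\vp=2a$ at $\vp_{\mathrm{min}}$ (Lemma~\ref{crlvl}(d)), so that $Q=2a(\vp-\vp_{\mathrm{min}})+O((\vp-\vp_{\mathrm{min}})^2)$, the identity written as $2\si=Q/(\vp-\yj_0)$ shows that $\lim_{\vp\to\vp_{\mathrm{min}}}\si$ equals $a$ when $\yj_0=\vp_{\mathrm{min}}$ and equals $0$ otherwise (including $\yj_0=\infty$). On the other hand this limit is geometric: it is the eigenvalue of the Hessian $\nabla d\vp$ at $y\in\sd$ on the limiting position of the horizontal plane $\mathcal{H}$. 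By Lemma~\ref{mrbgg}(i)--(ii) that Hessian has eigenvalue $a$ on $N_y\sd$ and $0$ on $T_y\sd$; since $\sd$ is a complex submanifold (Example~\ref{kpamb}), $N_y\sd$ is $J$-invariant, so along a normal geodesic $\mathcal{V}=\mathrm{Span}(v,u)$ tends into $N_y\sd$ and therefore $\mathcal{H}=\mathcal{V}^\perp$ tends to $T_y\sd$. Thus the limit of $\si$ is $a$ exactly when $T_y\sd=0$, i.e.\ exactly when $\sd$ is a point. Comparing the two computations yields the key equivalence: $\yj_0=\vp_{\mathrm{min}}$ on $C$ if and only if $\sd$ has complex dimension $0$, and symmetrically for $\sd^*$ and $\vp_{\mathrm{max}}$.

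Finally I would assemble (ii). If $\sd$ were a single point, then all integral curves, emanating from that one point, would carry $\yj_0=\vp_{\mathrm{min}}$, so $\yj$ would be constant and $\vp$ a special K\"ahler--Ricci potential by Lemma~\ref{dvgez}(c), contrary to hypothesis; the same applies to $\sd^*\nnh$. Hence both levels have complex dimension $1$, giving (ii)(a). The equivalence above then forces $\yj_0\notin\{\vp_{\mathrm{min}},\vp_{\mathrm{max}}\}$ for every $C$, and combined with (i) this places all values of $\yj$ in $\bbRP^1\smallsetminus\bbI$, proving (ii)(b). I expect the genuine obstacle to be the geometric identification of $\lim\si$ in the third paragraph---justifying the limiting position of $\mathcal{H}$ along normal geodesics and that the Hessian eigenvalue passes to the limit---whereas the algebraic step and the final bookkeeping are routine.
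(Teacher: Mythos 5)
Your proposal is correct and takes essentially the same route as the paper: part (i) is the identical argument (constancy of $\,\yj\,$ along gradient trajectories, $\,\vp\,$ sweeping out $\,\bbI$, and the contradiction $\,Q=2\si\hs(\vp-\yj)=0\,$ against $\,Q>0\,$ on $\,M'$), and your boundary computation of $\,\lim\si\,$ from the expansion $\,Q=2a(\vp-\vp_{\mathrm{min}})+O((\vp-\vp_{\mathrm{min}})^2)\,$ is exactly the paper's l'Hospital evaluation $\,2\si(y)=\lim\dot Q/(\dot\vp-\dot\yj)=2\ta(y)$, compared in both cases with the eigenvalue structure of $\,\nabla d\vp\,$ at $\,y\,$ from Lemma~\ref{mrbgg}(i)--(ii). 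Two minor differences: the paper obtains (ii)(a) directly from the Hopf--Rinow theorem (every geodesic emanating from a one-point critical manifold is normal to it, so Lemma~\ref{dvgez}(b) makes $\,\yj\,$ constant on $\,M$), whereas you derive it from the converse direction of your key equivalence, which also works; and the ``genuine obstacle'' you flag---identifying $\,\lim\si\,$ with a Hessian eigenvalue via the limiting position of $\,\mathcal{H}$---is easier than you fear, since $\,\si=\frac12\Delta\vp-\frac12\,dQ\hh/\nh d\vp\,$ extends smoothly to all of $\,M\,$ by Lemma~\ref{crlvl}(c), so that $\,2\si(y)=\Delta\vp(y)-2a=a\,\dimr N_y\sd-2a\,$ reads off $\,\dimc\sd\,$ with no limiting-plane argument at all.
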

\begin{proof}First, let $\,\yj(y)\in\bbI^\circ$ at some $\,y\in M$. By 
Theorem~\ref{qfcot}(v)\hh--\hh(vi), which can be used here in view of 
Example~\ref{kpamb} and (\ref{cdk}), $\,\mathrm{dist}\hh(\sd,y)\le\ds$, for 
the minimum level $\,\sd\,$ of $\,\vp$. Hence $\,y\,$ lies on a geodesic 
segment $\,\varGamma\hs$ of length $\,\ds\,$ emanating from $\,\sd\,$ and 
normal to $\,\sd$. Due to injectivity of $\,\mathrm{Exp}^\perp$ on 
$\,N^\ds\nh\sd\,$ (Lemma~\ref{mrsbt}), $\,\varGamma\hs$ also provides a 
shortest connection between $\,\sd\,$ and any point of $\,\varGamma\nh$. 
Therefore, the function $\,s\,$ of Theorem~\ref{qfcot}(vi), restricted to 
$\,\varGamma\nh$, serves as an arc-length parameter for $\,\varGamma\nh$. 
Theorem~\ref{qfcot}(v) (or, Lemma~\ref{dvgez}(b)) implies now that the 
$\,\vp$-im\-age of $\,\varGamma\,$ is $\,\bbI\,$ (or, respectively, that 
$\,\yj$ is constant on $\,\bbI$). Thus, $\,\varGamma\hs$ contains a point 
$\,x\,$ at which $\,\yj(x)=\vp(x)\in\bbI^\circ$ and, consequently, 
$\,Q(x)>0\,$ (cf.\ Lemma~\ref{crlvl}(a) and (\ref{not})). The equality 
$\,\yj(x)=\vp(x)$ contradicts in turn the definition of $\,\yj$, proving 
(i).

Next, if some critical manifold of $\,\vp\,$ (cf.\ Example~\ref{kpamb}) 
consisted of a single point, the Hopf-Ri\-now theorem and Lemma~\ref{dvgez}(b) 
would imply that $\,\yj\,$ is constant on $\,M$, thus making $\,\vp$ a special 
K\"ah\-\hbox{ler\hs-}\hskip0ptRic\-ci potential (Lemma~\ref{dvgez}(c)). This 
implies (ii\hh--\hh a).

Finally, if $\,\yj(y)=\vp_{\mathrm{min}}$ or $\,\yj(y)=\vp_{\mathrm{max}}$ 
at some $\,y\in M$, we may assume that $\,y\,$ is a critical point of 
$\,\vp\,$ and $\,\yj(y)=\vp(y)$, which is achieved by choosing $\,\varGamma\,$ 
as above and replacing $\,y\,$ with an endpoint of $\,\varGamma\nnh$. In view 
of (i), $\,\vp\ne\yj\,$ everywhere in the open set $\,M'\nh\subset M\,$ on 
which $\,d\vp\ne0$. A fixed geodesic $\,t\mapsto x(t)\,$ having the properties 
listed in Remark~\ref{dtzez}, for our $\,y$, and the equality 
$\,2\si=Q/(\vp-\yj)\,$ on $\,M'$ (immediate from the definition of $\,\yj\,$ 
in Lemma~\ref{dvgez}) now allow us to evaluate $\,2\si(y)\,$ via l'Hospital's 
rule, with $\,Q\,$ and $\,\vp-\yj\,$ both vanishing at $\,y=x(0)\,$ due to 
(\ref{not}). Consequently, $\,2\si(y)$ is the limit, as $\,t\to0$, of 
$\,\dot Q/(\dot\vp-\dot\yj)=(d_vQ)/(d_v\vp-d_v\yj)$, where we have used the 
`\nh dot\hh' notation of Remark~\ref{dtzez} and the fact that, since 
$\,v=\navp\,$ is tangent to the geodesic (Lemma~\ref{mrbgg}(iv)) and nonzero 
at $\,x(t)\,$ for $\,t\ne0\,$ close to $\,0\,$ (Remark~\ref{invma}), 
$\,d/dt\,$ equals a specific function of the variable $\,t\ne0\,$ times 
$\,d_v$. From (\ref{tps}.b), (\ref{tps}.c) and Lemma~\ref{dvgez}(a) we now 
obtain $\,2\si(y)=2\ta(y)$. The two eigenvalues of the Hess\-i\-an 
$\,\nabla d\vp\,$ at $\,y\,$ thus coincide, and so, according to 
Lemma~\ref{mrbgg}(i)\hh--\hh(ii), $\,T_yM\,$ is the normal space at $\,y\,$ of 
the critical manifold $\,\sd\,$ of $\,\vp\,$ containing $\,y$. Hence 
$\,\sd=\{y\}$ and, by (a), $\,\vp\,$ is a special 
K\"ah\-\hbox{ler\hs-}\hskip0ptRic\-ci potential, which yields (ii\hh--\hh b).
\end{proof}
For $\,(M,g,\vp)\,$ as in Theorem~\ref{clssf}, we now define the data 
(\ref{dat}) by choosing: $\,a\,$ and $\,\bbI\ni\vp\mapsto Q$, where 
$\,\bbI=[\hh\vp_{\mathrm{min}},\vp_{\mathrm{max}}]$, as in 
Lemma~\ref{crlvl}(c)\hh--\hh(d); $\,\sd\,$ to be the minimum level of $\,\vp$, 
with $\,\yj:\sd\to\bbRP^1$ obtained by restricting to $\,\sd\,$ the mapping 
$\,\yj$ introduced in Lemma~\ref{dvgez}, and with the metric $\,h\,$ on 
$\,\sd\,$ given by
\begin{equation}\label{het}
h\,=\,(\vp_{\mathrm{min}}\nh-\yj)^{-1}(\vp_*-\yj)\hs g\hh,
\end{equation}
$\vp_*\nh\in\bbI\,$ being the midpoint; the normal bundle $\,\mathcal{L}\,$ of 
$\,\sd\,$ with the Hermitian fibre metric $\,(\hskip2.2pt,\hskip1pt)$, the 
real part of which is $\,g\,$ (that is, $\,g\,$ restricted to 
$\,\mathcal{L}$); and, finally, the horizontal distribution $\,\mathcal{H}\,$ 
of the normal connection in $\,\mathcal{L}$. Lemmas~\ref{crlvl}(c)\hh--\hh(d) 
and~\ref{cdone}(ii) state that these objects satisfy conditions (i) -- (vii) 
in Section~\ref{fe} except for the equality 
$\,\varOmega\hs=-\hs a\hs(\vp_*-\yj)^{-1}\omega^{(h)}\nh$, which will be 
established in the next section.

\section{Proof of Theorem~\ref{clssf}, second part}\label{sp}
Using the data (\ref{dat}) just constructed for the given triple 
$\,(M,g,\vp)$, we also choose, as in Section~\ref{fe}, a $\,C^\infty$ 
dif\-feo\-mor\-phism 
$\,(\vp_{\mathrm{min}},\vp_{\mathrm{max}})\ni\vp\mapsto r\in(0,\infty)\,$ with 
$\,dr/d\vp=ar/Q$. Its inverse now gives rise to the composite 
$\,r\mapsto\vp\mapsto s$, for $\,\vp\mapsto s\,$ as in as in 
Theorem~\ref{qfcot}(v), allowing us to treat $\,s\,$ as a function of $\,r\,$ 
and write $\,s=\sigma(r)$, so that $\,r\mapsto \sigma(r)\,$ is a 
dif\-feo\-mor\-phism $\,(0,\infty)\to(0,\ds)$. This in turn leads to a 
fibre-pre\-serv\-ing dif\-feo\-mor\-phism 
$\,\theta:N\sd\smallsetminus\sd\to N^\ds\nh\sd\smallsetminus\sd\,$ of 
punc\-tured-disk bundles, which sends a vector $\,w\ne0\,$ normal to $\,\sd\,$ 
at any point to $\,\sigma(r)w/r$, where $\,r=|w|\,$ is the $\,g$-norm of 
$\,w$. For later reference, note that, according to Theorem~\ref{qfcot}(v),
\begin{equation}\label{dsr}
d\hh[\hs\sigma(r)]\hh/\nh dr\,=\,(ar)^{-1}Q^{1/2}\hskip12pt\mathrm{and}
\hskip8pt\sigma(0)\hs=\hs0\hh,\hskip12pt\mathrm{while}\hskip8pts\hs
=\hs\sigma(r)\hh.
\end{equation}
By Lemma~\ref{mrsbt}, Example~\ref{kpamb} and (\ref{cdk}), 
$\,F=\mathrm{Exp}^\perp\nnh\circ\hh\theta\,$ maps $\,N\sd\smallsetminus\sd\,$ 
dif\-feo\-mor\-phic\-al\-ly onto the open sub\-man\-i\-fold 
$\,M'\nh\subset M\,$ on which $\,d\vp\ne0$.

We now show that $\,F\,$ is a bi\-hol\-o\-mor\-phic isometry of 
$\,N\sd\smallsetminus\sd\subset N\sd=\mathcal{L}$, with the complex structure 
and metric obtained as in Section~\ref{fe} from the data (\ref{dat}), onto 
our $\,(M'\nnh,g)$, and that it sends the Kil\-ling potential with a 
geodesic gradient, described in Section~\ref{fe}, onto our $\,\vp$. The proof, split into three lemmas, closely follows the argument in 
\cite[\S\S15--16]{derdzinski-maschler-06}.

To minimize confusion, the hatted symbols 
$\,\hat M,\hat M'\nnh,\hat{\mathcal{V}},\hat{\mathcal{H}},\hat g,\hat J,
\hat v,\hat u\,$ stand for for the objects constructed in Section~\ref{fe} 
from our data (and from $\,\vp\mapsto r\,$ chosen above), which in 
Section~\ref{fe} appeared as $\,M,M'\nnh,\mathcal{V},\mathcal{H},g,J,v,u$. For 
$\,M,M'\nnh,\mathcal{V},\mathcal{H},g,v,J,u$, the meaning is now the same as 
in Section~\ref{ri}: they are associated with $\,(M,g)\,$ and the function 
$\,\vp:M\to\bbR$. However, $\,\vp,r\,$ and $\,s$, in their original form, are 
used not only for the independent variables ranging over 
$\,\bbI^\circ\nnh,(0,\infty)\,$ and $\,(0,\ds)$, but, along with $\,Q\,$ and 
$\,\yj$, also denote mappings defined on both manifolds $\,M'$ and 
$\,\hat M'\nnh$. Similarly, $\,\sd\,$ is treated as a sub\-man\-i\-fold both 
of $\,M\,$ (the minimum level of $\,\vp$) and of $\,\mathcal{L}=N\sd\,$ (the 
zero section). Again, $\,\prj:\mathcal{L}\to\sd\,$ is the bundle projection.
\begin{lemma}\label{fstqv}The dif\-feo\-mor\-phism\/ 
$\,F:\hat M'\nh\to M'$ sends the functions $\,s,\vp,Q\,$ and the mapping\/ 
$\,\yj\,$ defined on\/ $\,\hat M'$ to their analogs on\/ $\,M'\nnh$, and the 
vector field\/ $\,\hat v\,$ to\/ $\,v$. 
\end{lemma}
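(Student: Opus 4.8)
The plan is to show that the diffeomorphism $\,F=\mathrm{Exp}^\perp\nnh\circ\hh\theta\,$ intertwines the geometric invariants on the two sides, handling the functions $\,s,\vp,Q\,$ and the mapping $\,\yj\,$ first, and the vector field $\,\hat v\,$ last. The key observation is that all of these quantities are tied to the radial behavior of the construction, so I would begin by tracking what $\,F\,$ does along a single normal geodesic. Fix a point of $\,\sd\,$ and a unit normal vector $\,w\,$ there; on $\,N\sd\smallsetminus\sd\,$ the map $\,\theta\,$ sends the radius-$r$ point in direction $\,w\,$ to the radius-$\sigma(r)$ point in the same direction, and then $\,\mathrm{Exp}^\perp$ pushes it out along the geodesic of arc-length $\,\sigma(r)\,$ emanating from $\,\sd$. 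By Theorem~\ref{qfcot}(vi) the arc-length from $\,\sd\,$ equals the invariant $\,s$, so $\,F\,$ sends the point at radius $\,r\,$ on the $\,\hat M'$ side to the point at $\,s=\sigma(r)\,$ on the $\,M'$ side. This is exactly the statement that $\,F\,$ preserves $\,s$, and the relation $\,s=\sigma(r)\,$ in (\ref{dsr}) is identical on both manifolds by construction.

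Next I would propagate this to $\,\vp\,$ and $\,Q$. On both sides $\,\vp\,$ and $\,s\,$ are linked by the \emph{same} homeomorphism $\,\bbI\to[\hs0,\ds]\,$ of Theorem~\ref{qfcot}(v), determined solely by the function $\,\bbI\ni\vp\mapsto Q\,$ shared by the two constructions; since $\,F\,$ preserves $\,s$, it therefore preserves $\,\vp$, and hence also $\,Q\,$ because $\,Q\,$ is the common function of $\,\vp$. For $\,\yj\,$, the point is that on the $\,\hat M'$ side $\,\yj\,$ is the pullback $\,\yj\circ\prj\,$ of the mapping on $\,\sd$, constant along vertical fibres and hence along the $\,\theta$-images of radial segments, while on the $\,M'$ side $\,\yj\,$ is constant along the normal geodesics from $\,\sd\,$ by Lemma~\ref{dvgez}(b); thus both descend to the \emph{same} mapping $\,\sd\to\bbRP^1$ (the $\,\yj\,$ chosen in Section~\ref{pf} by restriction), and since $\,F\,$ covers the identity of $\,\sd\,$ (it is fibre-preserving over the footpoints, as $\,\theta\,$ preserves directions and $\,\mathrm{Exp}^\perp$ stays on the normal geodesic through the same point of $\,\sd$), $\,F\,$ sends $\,\yj\,$ to $\,\yj$.

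Finally, for the vector fields: on the $\,\hat M'$ side $\,\hat v\,$ is $\,a\,$ times the radial field, characterized by $\,d_{\hat v}=Q\,d/d\vp\,$ acting on functions of $\,\vp\,$ (see the line defining $\,v\,$ in Section~\ref{fe}), while on the $\,M'$ side $\,v=\navp\,$ satisfies $\,d_v\vp=Q\,$ by (\ref{tps}.b) and is tangent to the normal geodesics by Lemma~\ref{mrbgg}(iv). Since $\,F\,$ preserves $\,\vp\,$ and $\,Q\,$ and maps the radial segments to the normal geodesics, it must carry $\,\hat v\,$ to a vector field tangent to those geodesics and producing the same derivative of $\,\vp$; as both $\,\hat v\,$ and $\,v\,$ are the unique such fields (a gradient, and a multiple of the geodesic tangent fixed by the value of $\,d\vp$), they agree under $\,F$. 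I expect the main obstacle to be the careful verification that $\,F\,$ is genuinely fibre-preserving over $\,\sd\,$ and respects the radial/arc-length reparametrization encoded in $\,\sigma$, i.e.\ the precise bookkeeping in (\ref{dsr}) relating $\,dr/d\vp=ar/Q\,$ to $\,ds/d\vp=Q^{-1/2}$; everything else reduces to the fact that both structures are built from the identical one-variable data $\,(\bbI,a,Q)\,$ and the identical base mapping $\,\yj$.
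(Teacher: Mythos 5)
Your proposal is correct and follows essentially the same route as the paper: radial segments go to normal geodesics with $\hat g$-length $\,r\,$ matching $\,g$-length $\,\sigma(r)=s$, so Theorem~\ref{qfcot}(v)--(vi) (applied to both triples) gives the claims for $\,s,\vp,Q$; Lemma~\ref{dvgez}(b) handles $\,\yj$; and $\,\hat v\,$ goes to a multiple of $\,v\,$ by tangency, pinned down to equality by the shared normalization $\,d_v\vp=Q\,$ from (\ref{tps}.b). The only detail worth making explicit is that identifying arc-length along the normal geodesic with distance from $\,\sd\,$ uses the distance-minimizing clause of Remark~\ref{gauss} (as the paper does), not Theorem~\ref{qfcot}(vi) alone.
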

\begin{proof}In the case of $\,\yj\,$ this is clear from 
Lemma~\ref{dvgez}(b), since $\,F\,$ restricted to $\,\sd\,$ is the identity 
mapping.

Because of how we defined $\,\hat g\,$ on $\,\hat{\mathcal{V}}\,$ in 
Section~\ref{fe}, given $\,y\in\sd$, (\ref{dsr}) implies that a line segment 
of $\,g_y$-length $\,r\,$ emanating from $\,0\,$ in the normal space 
$\,N_y\sd\,$ has the $\,\hat g$-length $\,\sigma(r)$, which is at the same 
time the $\,g_y$-length of the segment's image under $\,\theta$. That image is 
also a segment in $\,N_y\sd\,$ issuing from $\,0$, and so 
$\,\mathrm{Exp}^\perp$ sends it to a geodesic segment of $\,g$-length 
$\,\sigma(r)\,$ in $\,(M,g)$, normal to $\,\sd\,$ at $\,y$. Since 
Theorem~\ref{qfcot}(vi) applies to both $\,(M,g,\vp)\,$ and 
$\,(\hat M,\hat g,\vp)$, our claim about $\,s\,$ follows from the 
dis\-tance-min\-i\-miz\-ing clause of Remark~\ref{gauss}.

As the homeo\-mor\-phic correspondence $\,\bbI\to[\hs0,\ds]\,$ of 
Theorem~\ref{qfcot}(v) holds in both $\,(M,g,\vp)\,$ and 
$\,(\hat M,\hat g,\vp)$, the 
same now follows for $\,\vp\,$ and $\,Q$. Finally, we just saw that $\,F\,$ 
sends line segments emanating from $\,0\,$ in the normal spaces of $\,\sd\,$ 
to normal $\,g$-ge\-o\-des\-ics issuing from $\,\sd$. Since $\,\hat v\,$ is 
tangent to the former (by definition), and $\,v=\navp$ to the latter (cf.\ 
Example~\ref{kpamb} and Lemma~\ref{mrbgg}(iv)), the $\,F\nh$-im\-age of 
$\,\hat v\,$ is the product of a function and $\,v$. That the function in 
question equals $\,1\,$ is in turn obvious from the normalizing condition 
(\ref{tps}.b), valid in both $\,(M,g,\vp)\,$ and $\,(\hat M,\hat g,\vp)$, 
along with our assertion, already established for $\,\vp\,$ and $\,Q$.
\end{proof}
\begin{lemma}\label{fsutu}The\/ $\,F$-im\-ages of\/ $\,\hat u\,$ and\/ 
$\,\hat{\mathcal{V}}\,$ are, respectively, $\,u\,$ and\/ $\,\mathcal{V}$, 
while\/ $\,\hat g\,$ and $\,\hat J$ restricted to\/ $\,\hat{\mathcal{V}}\,$ 
correspond under\/ $\,F\,$ to\/ $\,g\,$ and $\,J\,$ on\/ $\,\mathcal{V}$.
\end{lemma}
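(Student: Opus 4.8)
The plan is to reduce all four assertions to the single identity $\,F_*\hat u=u$. Granting it, the frame $\,\hat v,\hat u\,$ spans $\,\hat{\mathcal V}\,$ at every point of $\,\hat M'\nnh$: in the fibre $\,N_y\sd\,$ one has $\,\hat v=a\hs w\,$ and $\,\hat u=i\hat v=aJ_yw\,$ at a nonzero $\,w$, and $\,w,J_yw\,$ are linearly independent. Since $\,F_*\hat v=v\,$ by Lemma~\ref{fstqv}, this gives $\,F_*\hat{\mathcal V}=\mathrm{Span}\hs(v,u)=\mathcal V$. The metric claim then follows by evaluation on this frame: on $\,(\hat M,\hat g)\,$ one has $\,\hat g(\hat v,\hat v)=\hat g(\hat u,\hat u)=Q\,$ and $\,\hat g(\hat v,\hat u)=0\,$ by (\ref{tps}.d), matching $\,g(v,v)=g(u,u)=Q\,$ and $\,g(v,u)=0\,$ on $\,(M,g)$, while $\,F\,$ carries $\,Q\,$ to $\,Q\,$ (Lemma~\ref{fstqv}). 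Likewise $\,\hat J\hat v=\hat u\,$ and $\,\hat J\hat u=-\hat v\,$ correspond, under $\,F_*$, to $\,Jv=u\,$ and $\,Ju=-v$, so $\,\hat J\,$ and $\,J\,$ agree on the vertical distributions.

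To establish $\,F_*\hat u=u\,$ I would work at the level of flows. Since $\,u=J(\navp)\,$ and $\,\navp=0\,$ on the minimum level $\,\sd$, the Kil\-ling field $\,u\,$ vanishes along $\,\sd$, so its flow $\,\Psi_t\,$ fixes $\,\sd\,$ pointwise and, at $\,y\in\sd$, has differential $\,d\Psi_t|_y=e^{\hs t[\nabla u]_y}$. By Remark~\ref{evnbu}, $\,[\nabla u]_y\,$ acts as $\,aJ_y\,$ on $\,N_y\sd\,$ and vanishes on $\,T_y\sd$. As $\,\Psi_t\,$ is an isometry fixing $\,y$, it carries each normal geodesic $\,s\mapsto\exp_y(sw)\,$ to $\,s\mapsto\exp_y(s\hs e^{\hs taJ_y}w)$, whence $\,\Psi_t(\mathrm{Exp}^\perp w)=\mathrm{Exp}^\perp(e^{\hs taJ_y}w)\,$ for every normal $\,w\,$ with image in $\,M'$ (a global statement, by Lemma~\ref{mrsbt}, extending the near-$\sd$ description of Remark~\ref{kilxp}). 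Thus $\,\mathrm{Exp}^\perp$ conjugates $\,\Psi_t\,$ to the fibrewise rotation $\,\hat\Phi_t:w\mapsto e^{\hs taJ_y}w$. This same $\,\hat\Phi_t\,$ is the flow of $\,\hat u=i\hat v\,$ on $\,N\sd$, and the radial reparametrization $\,\theta\,$ commutes with $\,\hat\Phi_t$, since the rotations preserve $\,g$-norms while $\,\theta\,$ merely rescales them. Consequently $\,F\circ\hat\Phi_t=\mathrm{Exp}^\perp\circ\theta\circ\hat\Phi_t=\mathrm{Exp}^\perp\circ\hat\Phi_t\circ\theta=\Psi_t\circ F$, and differentiating at $\,t=0\,$ yields $\,F_*\hat u=u$.

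The hard part is the flow identity of the previous paragraph: upgrading Kobayashi's local normal-coordinate description of $\,u\,$ (Remark~\ref{kilxp}) to the entire range $\,N^\ds\nh\sd\smallsetminus\sd$, which I would secure from the isometric character of $\,\Psi_t\,$ together with the injectivity of $\,\mathrm{Exp}^\perp$ on $\,N^\ds\nh\sd\,$ (Lemma~\ref{mrsbt}), and then checking that this behaviour survives composition with $\,\theta$. Once the commutation $\,F\circ\hat\Phi_t=\Psi_t\circ F\,$ is in hand, everything else is the routine frame computation of the first paragraph.
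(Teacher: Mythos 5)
Your proof is correct and follows essentially the same route as the paper's: the paper likewise reduces everything to $\,F_*\hat u=u$, obtaining that identity from the observation that $\,\theta\,$ preserves $\,\hat u\,$ together with Remark~\ref{kilxp} (whose justification is precisely your flow-conjugation argument) and Remark~\ref{evnbu}, and then deduces the remaining assertions from Lemma~\ref{fstqv}, $\,\mathcal{V}=\mathrm{Span}\hs(v,u)\,$ and (\ref{tps}.d), exactly as in your first paragraph. Your explicit upgrade of the local statement of Remark~\ref{kilxp} to all of $\,N^\ds\nh\sd\smallsetminus\sd\,$ (via the isometries $\,\Psi_t\,$ and Lemma~\ref{mrsbt}) is a detail the paper leaves implicit, not a different method.
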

\begin{proof}Obviously, $\,\theta\,$ preserves $\,\hat u$, that is, the 
$\,\theta$-im\-age of $\,\hat u\,$ is the restriction of $\,\hat u\,$ to 
$\,N^\ds\nh\sd\smallsetminus\sd$. As $\,u\,$ is a Kil\-ling field, 
Remarks~\ref{kilxp} and~\ref{evnbu} combined with the definition of 
$\,\hat u\,$ (cf.\ Section~\ref{fe}) imply in turn that $\,\mathrm{Exp}^\perp$ 
sends $\,\hat u\,$ to $\,u$. Hence so does $\,F\nh$.

The rest of our assertion is now obvious from Lemma~\ref{fstqv}, since in both 
$\,(M,g,\vp)$ and $\,(\hat M,\hat g,\vp)\,$ we have the relations 
(\ref{tps}.d) and $\,\mathcal{V}=\mathrm{Span}\hs(v,u)\,$ or, respectively, 
their hatted versions.
\end{proof}
\begin{lemma}\label{fshgj}The assertion of Lemma\/~{\rm\ref{fsutu}} remains 
true also when\/ $\,\hat{\mathcal{V}}\,$ and\/ $\,\mathcal{V}\,$ are replaced 
by\/ $\,\hat{\mathcal{H}}\,$ and\/ $\,\mathcal{H}$, while the data\/ 
{\rm(\ref{dat})} constructed in Section\/~{\rm\ref{pf}} satisfy condition\/ 
{\rm(vii)} of Section\/~{\rm\ref{fe}}.
\end{lemma}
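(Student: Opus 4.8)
The plan is to carry the $\mathcal{V}$-side correspondence of Lemma~\ref{fsutu} over to $\mathcal{H}$ by analysing the push-forwards $W=F_*\hat w$ of the horizontal lifts $\hat w$ of vector fields $w$ on $\sd$, and I would begin by recording two features of such $W$. Since $F_*\hat v=v$ and $F_*\hat u=u$ while $\hat w$ commutes with $\hat v$ and $\hat u$, the field $W$ commutes with $v$ and $u$. Moreover, because $F=\mathrm{Exp}^\perp\circ\hs\theta$ carries radial rays of $N\sd$ to normal geodesics issuing from $\sd$, and $\hat w$ arises by moving the foot-point along a curve in $\sd$ while keeping the fibre vector normal-parallel (of fixed direction, independent of the radius), the restriction of $W$ to any normal geodesic $\,t\mapsto x(t)$, $x(0)=y\in\sd$, is the Jacobi field $Y$ of that geodesic variation, with $Y(0)=w\in T_y\sd$; as $\sd$ is totally geodesic (Example~\ref{kpamb}), one has in addition $\nabla\nh_{\dot x}Y|_{0}=0$.

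My first goal is $F_*\hat{\mathcal{H}}=\mathcal{H}$. That $W\perp v$ is immediate from $F^*\vp=\vp$ (Lemma~\ref{fstqv}) and $\mathrm{Ker}\,d\vp=v^\perp$, so $W=W^{\mathcal{H}}+\beta u$ with $\beta=g(W,u)/Q$. Writing $\nabla_{\!v}W=\nabla_{\!W}v=\si W^{\mathcal{H}}+\beta\ta u$ (using $[v,W]=0$ and the eigenvalues $\si$ on $\mathcal{H}$, cf.\ (\ref{dvg}), and $\ta$ on $\mathcal{V}$ of $\nabla\nh v$, together with $\nabla_{\!v}u=\ta u$) and using $[u,W]=0$, a short computation gives $d_v[g(W,u)]=2\ta\hh g(W,u)$ and $d_u[g(W,u)]=0$; since $d_vQ=2\ta Q$ by (\ref{tps}.c) and $d_uQ=0$ (as $d_u\vp=0$ and $Q$ is a function of $\vp$), the function $\beta$ is constant along every $\mathcal{V}$-orbit. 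On the other hand, along a normal geodesic $W$ equals the Jacobi field $Y$ above, so $g(W,u)=Q^{1/2}g(Y,J\dot x)$; here $J\dot x$ is parallel with $J\dot x(0)\in N_y\sd$, whereas $Y(0)=w\in T_y\sd$ is $g$-orthogonal to $N_y\sd$ because $\sd$ is a complex submanifold, and $\nabla\nh_{\dot x}Y|_0=0$. Hence $g(Y,J\dot x)=O(t^2)$ while $Q^{1/2}=O(t)$, so $g(W,u)=O(t^3)$ vanishes faster than $Q=O(t^2)$ and $\beta\to0$ at $\sd$. A constant with limit $0$ is $0$, so $W\in\mathcal{H}$, and equality of ranks yields $F_*\hat{\mathcal{H}}=\mathcal{H}$.

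Once $W$ is horizontal, (\ref{dvq}) applies verbatim and, with $2\si=(\vp-\yj)^{-1}Q$, shows that $g(W,W')/(\vp-\yj)$ is constant along each $\mathcal{V}$-orbit. Letting $t\to0$, where $W\to w$, $W'\to w'$ and $\vp\to\vp_{\mathrm{min}}$, and inserting the definition (\ref{het}) of $h$, I find this constant to equal $(\vp_*-\yj)^{-1}h(w,w')$, which is exactly the value of $\hat g(\hat w,\hat w')/(\vp-\yj)$ dictated by the construction of Section~\ref{fe}. Thus $F^*g=\hat g$ on $\mathcal{H}$ as well, so $F$ is an isometry. For the complex structures I would compare the $\mathcal{V}$-parts of Lie brackets: the identity (\ref{wwv}) holds on both $M'$ and $\hat M'$, and since $F$ preserves brackets while $\si$, $Q$ and $u$ correspond to $\si$, $Q$ and $u$, the two versions force $g(JW,W')=\hat g(\hat J\hat w,\hat w')=g(F_*(\hat J\hat w),W')$ for all $w'$; nondegeneracy of $g$ on $\mathcal{H}$ then gives $F_*\hat J=J$ there, so $F$ is a biholomorphic isometry.

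Finally I would verify (vii). Pushing the construction relation $[\hat w,\hat w']^{\hat{\mathcal{V}}}=a^{-1}\varOmega(w,w')\hat u$ forward by $F$ gives $[W,W']^{\mathcal{V}}=a^{-1}\varOmega(w,w')u$, and comparison with (\ref{wwv}) in the form $Q\,[W,W']^{\mathcal{V}}=-2\si\hh g(JW,W')u$ yields $a^{-1}\varOmega(w,w')=-(\vp-\yj)^{-1}g(JW,W')$. Inserting $g|_{\mathcal{H}}=(\vp_*-\yj)^{-1}(\vp-\yj)h$ and $F_*\hat J=J$, so that $g(JW,W')=(\vp_*-\yj)^{-1}(\vp-\yj)\hh\omega^{(h)}(w,w')$, collapses this to $\varOmega=-a(\vp_*-\yj)^{-1}\omega^{(h)}$, which is (vii). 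The main obstacle is the step $F_*\hat{\mathcal{H}}=\mathcal{H}$: the flow-invariance of $\beta$ is by itself consistent with any constant, and only the Jacobi-field description of $W$ near the totally geodesic complex surface $\sd$ forces that constant to be $0$; granting this orthogonality, the metric, the complex structure and the curvature identity all drop out of the bracket and eigenvalue relations already assembled in Section~\ref{ri}.
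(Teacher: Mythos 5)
Your overall strategy is the same as the paper's: first show $F_*\hat{\mathcal{H}}=\mathcal{H}$ by exhibiting a quantity that is constant along the normal geodesics and vanishes in the limit at $\sd$, then get the metric correspondence from (\ref{dvq}) together with (\ref{het}), and finally read off condition (vii) by comparing (\ref{wwv}) with the bracket identity $[\hat w,\hat w\hh']^{\hat{\mathcal{V}}}=a^{-1}\varOmega(\hat w,\hat w\hh')\hs\hat u$. Your conserved quantity $\beta=g(W,u)/Q$ is exactly the paper's $\langle u,x_t\rangle/Q$; where the paper evaluates its limit at $\sd$ by l'Hospital's rule (using $x_{st}\to\nabla_{\!\dot y}w=0$), you instead count orders of vanishing of the Jacobi field $Y=x_t$ against $Q=\Theta(s^2)$, using both $\nabla_{\!\dot x}Y|_0=0$ and the $J$-invariance of $N_y\sd$. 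That computation is correct and essentially equivalent. Your derivation of $g(W,v)=0$ from $F^*\vp=\vp$ replaces the paper's appeal to the generalized Gauss lemma; also fine. The metric step and the verification of (vii) match the paper's.

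The one genuine gap is in your treatment of the complex structure. You deduce $F_*\hat J=J$ on $\mathcal{H}$ by equating the two instances of (\ref{wwv}); but what that comparison actually yields is $\si\hs g(JW,W\hh')=\si\hs\hat g(\hat J\hat w,\hat w\hh')$, and you must divide by $\si$. Since $2\si=(\vp-\yj)^{-1}Q$, one has $\si=0$ precisely over $\yj^{-1}(\infty)\subset\sd$, which may have nonempty interior (nonconstancy of $\yj$ does not prevent this -- compare the example built in Section~\ref{ri}). On that set both sides of (\ref{wwv}) read $0=0$ and give no information about $J$. The repair is short: once $F$ is known to be an isometry on the rank-two bundle $\mathcal{H}$, the pullback $F_*\hat J$ is an orthogonal complex structure on $\mathcal{H}$, hence equals $\pm J$ with a locally constant sign on the connected set $M'\nnh$; your bracket argument (or the paper's observation that $F=\mathrm{Id}$ on $\sd$) fixes the sign to be $+$ at one point, hence everywhere. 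This is in fact how the paper argues, via two-dimensionality of $\sd$ rather than via (\ref{wwv}). Note that your verification of (vii) is unaffected, since over $\yj^{-1}(\infty)$ the required identity reduces to $\varOmega=0$, which follows from $\si=0$ in (\ref{wwv}) without knowing $J$ there.
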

\begin{proof}Let us fix a $\,g$-unit vector field 
$\,t\mapsto w(t)\in N_{y(t)}\sd$, normal to $\,\sd$, defined along a curve 
$\,t\mapsto y(t)\in\sd$, and parallel relative to the normal connection in 
$\,\mathcal{L}=N\sd$. Since $\,\sd\,$ is totally geodesic in $\,(M,g)\,$ 
(see Example~\ref{kpamb}), the last condition reads $\,\nabla_{\!\dot y}w=0$, 
where $\,\nabla\,$ is the Le\-vi-Ci\-vi\-ta connection of $\,g$. The variable 
$\,t\,$ ranges over some given open interval $\,(b,c)$. For any 
$\,t\in(b,c)\,$ and $\,s\in(0,\ds)$, we define $\,x(t,s)\in M\,$ to be the 
$\,F$-im\-age of $\,rw(t)\,$ treated as an element of $\,\hat M'\nnh$, for the 
unique $\,r\in(0,\infty)\,$ with $\,s=\sigma(r)$. Thus, by the definition of 
$\,F$, we obtain a mapping
\begin{equation}\label{map}
(b,c)\times(0,\ds)\ni(t,s)\,\mapsto\,x(t,s)\,
=\,\mathrm{exp}_{y(t)}\hs sw(t)\in M\hh.
\end{equation}
We will use subscripts for its partial derivatives $\,x_t,x_s$, and their 
partial covariant derivatives $\,x_{ts},x_{ss}$, etc. All such derivatives are 
sections of the pull\-back of $\,\tm\,$ under the mapping (\ref{map}). The 
sub\-script-style partial (or, partial covariant) derivatives also make sense 
for functions (or, respectively, vector fields) on $\,M$, which amounts to 
differentiating the latter objects along each of the curves given by 
(\ref{map}) with fixed $\,s\,$ or fixed $\,t$. More details can be found in 
\cite[\S14]{derdzinski-maschler-06}.

Writing $\,\langle\,,\rangle\,$ instead of $\,g$, and denoting by 
$\,|\hskip3pt|\,$ the $\,g$-norm, we now have
\begin{enumerate}
  \def\theenumi{{\rm\alph{enumi}}}
\item $x_s=\hh Q^{-1/2}v$, \ \ $\,|v|=|u|=Q^{\hs1/2}\nnh$,
\item $\langle u,x_t\rangle\nh_s=2\hs\langle u,x_{st}\rangle$,
\item $\langle u,x_t\rangle\nh_s
=\hs2\langle u,x_t\rangle\hs\ta\hs Q^{-1/2}\nnh$,
\item $Q_s=\hs2\ta\hs Q^{\hs1/2}$.
\end{enumerate}
Although equalities (a) -- (d) all appear in 
\cite[p.\ 101]{derdzinski-maschler-06}, they have to be established here 
independently, as \cite{derdzinski-maschler-06} makes a stronger assumption 
about $\,\vp$. However, the argument is the same as in 
\cite{derdzinski-maschler-06}.

First, (\ref{tps}.d) implies the second part of (a), and the first part then 
follows: by (\ref{map}) and Lemma~\ref{fstqv}, $\,v\,$ equals a positive 
function times $\,x_s$, and $\,|x_s|=1$. Furthermore, $\,u\,$ is a Kil\-ling 
field, so that $\,\langle u_t,x_s\rangle=\langle[\nabla\nh u]x_t,x_s\rangle
=-\hs\langle u_s,x_t\rangle$, while 
$\,\langle u,x_{st}\rangle=-\hs\langle u_t,x_s\rangle$, as (a) and 
(\ref{tps}.d) give $\,\langle u,x_s\rangle=0$. Consequently, 
$\,\langle u,x_t\rangle\nh_s\nh=\langle u,x_{st}\rangle
+\langle u,x_{ts}\rangle$, which yields (b), since $\,\nabla\,$ is 
tor\-sion-free, and so $\,x_{ts}\nh=x_{st}$. The relations just established 
and (a) also show that $\,\langle u,x_t\rangle\nh_s/2=\langle u,x_{st}\rangle
=-\hs\langle u_t,x_s\rangle=\langle u_s,x_t\rangle
=\langle[\nabla\nh u]x_s,x_t\rangle
=Q^{-1/2}\langle\nabla_{\!v}u,x_t\rangle$, which proves (c), as 
$\,\nabla_{\!v}u=\nabla_{\!v}(Jv)=J\nabla_{\!v}v=\ta\hskip.4ptJv
=\ta\hskip.4ptu\,$ by (\ref{not}) and (\ref{nvv}). Finally, (d) is obvious 
from (\ref{tps}.c) and (a).

By (c) and (d), $\,[\langle u,x_t\rangle/Q\hs]_s=0$. Hence 
$\,\langle u,x_t\rangle/Q\,$ is constant as a function of $\,s$. To show that 
$\,\langle u,x_t\rangle/Q=0$, we take the limit of 
$\,\langle u,x_t\rangle/Q\,$ as $\,s\to0$, that is (cf.\ 
Theorem~\ref{qfcot}(v)), as $\,\vp\to\vp_{\mathrm{min}}$. We may use 
l'Hospital's rule, since the numerator and denominator both vanish at 
$\,\vp=\vp_{\mathrm{min}}$ due to (\ref{not}) and the fact that (\ref{map}) 
has an obvious $\,C^\infty$ extension to $\,(b,c)\times[\hs0,\ds)$. Now (b), 
(d) and (a) give 
$\,\langle u,x_t\rangle\nh_s\hh/\nh Q_s
=\langle u,x_{st}\rangle\hs Q^{-1/2}/\ta
=\langle u/|u|,x_{st}\rangle/\ta$. The last expression tends to $\,0\,$ as 
$\,s\to0\,$ since $\,\ta=a\ne0\,$ at $\,\vp=\vp_{\mathrm{min}}$ due to 
Lemma~\ref{crlvl}(d) and (\ref{tps}.a), while $\,x_{st}$ at $\,s=0\,$ equals  
$\,\nabla_{\!\dot y}w$, and so $\,x_{st}\nh\to0\,$ as $\,s\to0$.

Consequently, $\,\langle u,x_t\rangle=0$, while $\,\langle v,x_t\rangle=0\,$ 
in view of (a) and the generalized Gauss lemma \cite[p.\ 26]{gray}. Therefore, 
$\,\mathcal{H}\,$ is the $\,F$-im\-age of $\,\hat{\mathcal{H}}$.

Combined with the assertion about $\,u\,$ in Lemma~\ref{fsutu} and 
(\ref{wwv}), this yields the formula for $\,\varOmega\,$ required by condition 
(vii) of Section~\ref{fe}, since, given sections $\,\hat w,\hat w\hh'$ of 
$\,\hat{\mathcal{H}}$, the $\,\hat{\mathcal{V}}\,$ component of 
$\,[\hat w,\hat w\hh'\hh]\,$ is 
$\,a^{-1}\varOmega(\hat w,\hat w\hh'\hh)\hs\hat u$, cf.\ 
\cite[formula (3.6)]{derdzinski-maschler-03}.

For fixed $\,t\in(b,c)$, let $\,\hat w\,$ be the 
$\,\hat{\mathcal{H}}$-hor\-i\-zon\-tal lift to 
$\,\pi^{-1}(\sd')\smallsetminus\sd'$ of a vector field on a neighborhood 
$\,\sd'$ of $\,y(t)\,$ in $\,\sd$, having the value $\,\dot y(t)\,$ at 
$\,y(t)$. As we just showed, the $\,F$-im\-age 
of $\,\hat w\,$ is a section $\,w\,$ of $\,\mathcal{H}$, defined on 
$\,F(\pi^{-1}(\sd')\smallsetminus\sd')$. Since $\,\hat w\,$ obviously commutes 
with $\,\hat v\,$ and $\,\hat u$, Lemmas~\ref{fstqv} and~\ref{fsutu} imply 
that $\,w\,$ commutes with $\,v\,$ and $\,u$, while, by (\ref{map}), 
$\,w_{x(t,s)}=x_t(t,s)\,$ for all $\,s\in(0,\ds)\,$ and our fixed $\,t$. 
Therefore (a) and (\ref{dvq}) give $\,[\hs\si\hs g(x_t,x_t)/Q\hh]_s\nh=0$. 
Thus, since $\,Q\hs/\nh(2\si)=\vp-\yj\,$ (see Lemma~\ref{dvgez}), 
$\,\langle x_t,x_t\rangle/(\vp-\yj)\,$ is constant as a function of $\,s$, 
that is, equal to its value at $\,s=0$. In other words, writing 
$\,y,\dot y,\yj,\vp\,$ instead of $\,y(t),\dot y(t),\yj(y(t))\,$ and 
$\,\vp(x(t,s))$, we have 
$\,\langle x_t,x_t\rangle
=(\vp_{\mathrm{min}}\nh-\yj)^{-1}(\vp-\yj)\langle\dot y,\dot y\rangle$, both 
if $\,\yj(y(t))\ne\infty$, and when $\,\yj(y(t))=\infty\,$ (provided that, 
in the latter case, one lets $\,(\vp_{\mathrm{min}}\nh-\yj)^{-1}(\vp-\yj)$ 
stand for $\,1$). In view of (\ref{het}), with $\,g\,$ now denoted by 
$\,\langle\,,\rangle$, the definition of $\,\hat g\,$ in Section~\ref{fe} 
thus shows that $\,\langle x_t,x_t\rangle\,$ at $\,(t,s)\,$ equals 
$\,\hat g(\hat w,\hat w)\,$ at $\,F^{-1}(x(t,s))$, proving our claim 
about $\,\hat g\,$ and $\,g$.

Finally, since $\,\dimr\sd=2$, both $\,\hat g\,$ and $\,g$, restricted to 
$\,\hat{\mathcal{H}}\,$ and $\,\mathcal{H}$, determine $\,\hat J\,$ on 
$\,\hat{\mathcal{H}}\,$ and $\,J\,$ on $\,\mathcal{H}\,$ uniquely up to a sign. Hence $\,F\,$ sends $\,\hat J\,$ on $\,\hat{\mathcal{H}}\,$ to $\,J\,$ 
on $\,\mathcal{H}$, with the plus sign due to the fact that 
$\,F=\mathrm{Id}\,$ on $\,\sd\,$ (which is tangent to both 
$\,\hat{\mathcal{H}}\,$ and $\,\mathcal{H}$).
\end{proof}
According to Lemmas~\ref{fstqv} --~\ref{fshgj}, $\,F\,$ is a 
bi\-hol\-o\-mor\-phic isometry of $\,(\hat M'\nnh,\hat g)\,$ onto 
$\,(M'\nnh,g)$, sending the Kil\-ling potential $\,\vp\,$ on 
$\,(\hat M'\nnh,\hat g)\,$ to $\,\vp\,$ on $\,(M'\nnh,g)$. Lemma~\ref{ismex} 
now implies that $\,F\,$ has an extension $\,M'\nh\to M$, which proves 
Theorem~\ref{clssf}.


\begin{thebibliography}{19} 

\bibitem{calabi}\textsc{E. Calabi}, \textit{Extremal K\"ah\-ler metrics}, in: 
``Seminar on Differential Geometry'', S.~T.~Yau (ed.), Annals of Math.\ 
Studies \textbf{102}, Princeton Univ.\ Press, Princeton, NJ, 1982, 
259\hs--290.\\
MR\hs0645743 (83i:53088)

\bibitem{cao}\textsc{H.-D. Cao}, \textit{Existence of gradient 
K\"ah\-\hbox{ler\hs-}\hskip0ptRic\-ci sol\-i\-tons}, in: ``Elliptic and 
Parabolic Methods in Geometry, Minneapolis, MN, 1994'', A.K. Peters, 
Wellesley, MA, 1996, 1--16.\\
MR\hs1417944 (98a:53058)

\bibitem{derdzinski}\textsc{A. Derdzinski}, \textit{Special 
bi\-con\-for\-mal changes of K\"ah\-ler surface metrics}, preprint, available 
at \,\texttt{arXiv:1103.6257}.

\bibitem{derdzinski-maschler-03}\textsc{A. Derdzinski and 
G. Maschler}, \textit{Local classification of 
con\-for\-mal\-ly-Ein\-stein K\"ah\-ler metrics in higher dimensions}, Proc.\ 
London Math.\ Soc. (3) \textbf{87} (2003), 779\hs--\hs819.\\
http:/\hskip-2pt/dx.doi.org/10.1112/S0024611503014175. 
MR\hs2005883 (2004i:53051)

\bibitem{derdzinski-maschler-06}\textsc{A. Derdzinski and G. Maschler}, 
\textit{Special K\"ah\-\hbox{ler\hs-}\hskip0ptRic\-ci potentials on compact 
K\"ah\-ler manifolds}, J.\ reine angew.\ Math. \textbf{593} (2006), 
73\hs--116. \ 
http:/\hskip-2pt/dx.doi.org/10.1515/CRELLE.2006.030.\\
MR\hs2227140 (2007b:53150)

\bibitem{gray}\textsc{A. Gray}, \textit{Tubes}, Progress in Mathematics, vol.\ 
221, Birkh\"auser Verlag, Basel, 2004, ISBN: 3\hs-7643\hs-6907-\hh8. \ 
MR\hs1044996 (92d:53002)

\bibitem{kobayashi}\textsc{S. Kobayashi}, {\textit Fixed points of 
isometries},  Nagoya Math.\ J. \textbf{13} (1958), 63\hs--\hs68.\\
http:/\hskip-2pt/projecteuclid.org/euclid.nmj/1118800030. \ MR\hs0103508 (21 
\#2276)

\bibitem{koiso}\textsc{N. Koiso}, \textit{On rotationally symmetric Hamilton's 
equation for K\"ah\-\hbox{ler\hs-}\hskip0ptEin\-stein metrics}, In: ``Recent 
Topics in Differential and Analytic Geometry'' (T. Ochiai, ed.), Adv.\ Stud.\ 
Pure Math. \textbf{18}\hs--{\rm I}, Academic Press, Boston, MA, 1990, 
327--337. \ 
MR\hs1145263 (93d:53057) 

\end{thebibliography}
\end{document}